\newtheorem{theorem}{Theorem}[section]
\newtheorem{lemma}[theorem]{Lemma}
\newtheorem{thm}[theorem]{Theorem}
\newtheorem{prop}[theorem]{Proposition}
\newtheorem{rem}[theorem]{Remark}
\newtheorem{coro}[theorem]{Corollary}
\newtheorem{defn}[theorem]{Definition}
\newtheorem*{cond}{Condition}
\newtheorem{con/que}[theorem]{Conjecture/Question}
\newcommand{\ra}{\rightarrow}
\newcommand{\mo}{\mathcal{O}}
\newcommand{\mf}{\mathcal{F}}
\newcommand{\mg}{\mathcal{G}}
\newcommand{\me}{\mathcal{E}}
\newcommand{\mi}{\mathcal{I}}
\newcommand{\mk}{\mathcal{K}}
\newcommand{\ms}{\mathcal{S}}
\newcommand{\mt}{\mathcal{T}}
\newcommand{\mm}{\mathcal{M}}
\newcommand{\mw}{\mathcal{W}}
\newcommand{\mv}{\mathcal{V}}
\newcommand{\cl}{\mathcal{L}}
\newcommand{\ts}{\textbf{S}}
\newcommand{\mc}{\mathcal{C}}
\newcommand{\mr}{\mathcal{R}}
\newcommand{\cd}{\mathcal{D}}
\newcommand{\E}{\mathscr{E}}
\newcommand{\F}{\mathscr{F}}
\newcommand{\G}{\mathscr{G}}
\newcommand{\Q}{\mathscr{Q}}
\newcommand{\R}{\mathscr{R}}
\newcommand{\A}{\mathscr{A}}
\newcommand{\B}{\mathscr{B}}
\newcommand{\C}{\mathscr{C}}
\newcommand{\I}{\mathscr{I}}
\newcommand{\Hom}{\operatorname{Hom}}
\newcommand{\Ext}{\operatorname{Ext}}
\newcommand{\Pic}{\operatorname{Pic}}
\newcommand{\Tor}{\operatorname{Tor}}
\def\<{\langle}
\def\>{\rangle}
\newcommand{\z}{\Theta}
\newcommand{\ml}{M(L,0)}
\newcommand{\mlk}{M(L\otimes K_X,0)}
\newcommand{\mli}{M^{int}(L,0)}
\newcommand{\wrn}{W(r,0,n)}
\newcommand{\wtt}{W(2,0,2)}
\newcommand{\lcl}{\lambda_{c^r_n}(L)}
\newcommand{\lrl}{\lambda_r(L)}
\newcommand{\ltl}{\lambda_2(L)}
\newcommand{\crn}{c^r_n}
\newcommand{\cb}{\textbf{CB}}
\newcommand{\ca}{\textbf{CA}}
\newcommand{\ls}{|L|}
\newcommand{\p}{\mathbb{P}}
\newcommand{\bz}{\mathbb{Z}}
\newcommand{\ks}{\mathfrak{S}}
\begin{document}
\fontsize{12pt}{14pt} \textwidth=14cm \textheight=21 cm
\numberwithin{equation}{section}
\title{Strange duality on rational surfaces.}
\author{Yao Yuan}
\address{Yau Mathematical Sciences Center, Tsinghua University, 100084, Beijing, P. R. China}
\email{yyuan@mail.tsinghua.edu.cn; yyuan@math.tsinghua.edu.cn}
\subjclass[2000]{Primary 14D05}

\begin{abstract} We study Le Potier's strange duality conjecture on a rational surface.  We focus on the case involving the moduli space of rank 2 sheaves with trivial first Chern class and second Chern class 2, and the moduli space of 1-dimensional sheaves with determinant $L$ and Euler characteristic 0.   We show the conjecture for this case is true under some suitable conditions on $L$, which applies to $L$ ample on any Hirzebruch surface $\Sigma_e:=\p(\mo_{\p^1}\oplus\mo_{\p^1}(e))$ except for $e=1$.  When $e=1$, our result applies to $L=aG+bF$ with $b\geq a+[a/2]$, where $F$ is the fiber class, $G$ is the section class with $G^2=-1$ and $[a/2]$ is the integral part of $a/2$. 

\end{abstract}
\maketitle
\section{Introduction.}
In this whole paper, $X$ is a rational surface over the complex number $\mathbb{C}$, with $K_X$ the canonical divisor and $H$ the polarization such that the intersection number $K_X.H<0$.  We use the same letter to denote both the line bundles and the corresponding divisor classes, but we write $L_1\otimes L_2$, $L^{-1}$ for line bundles while $L_1+L_2$, $-L$ for the corresponding divisor classes.  Denote by $L_1.L_2$ the intersection number of $L_1$ and $L_2$.  $L^2:=L.L$.

Let $K(X)$ be the Grothendieck group of coherent sheaves over $X$.  Define a quadratic form $(u,c)\mapsto \<u,c \>:=\chi(u\otimes c)$ on $K(X)$, where $\chi(-)$ is the holomorphic Euler characteristic and $\chi(u\otimes c)=\chi(\mf\otimes^{L}\mg)$ for any $\mf$ of class $u$, $\mg$ of class $c$ and $\otimes^L$ the flat tensor.  

For two elements $c,u\in K(X)$ orthogonal to each other with respect to $\<,\>$, we have $M_X^H(c)$ and $M_X^H(u)$ the moduli spaces of $H$-semistable sheaves of classes $c$ and $u$ respectively.  If there are no strictly semistable sheaves of classes $c$ ($u$, resp.), then over $M_X^H(c)$ ($M_X^H(u)$, resp.) there is a well-defined line bundle $\lambda_c(u)$ ($\lambda_u(c)$, resp.) called determinant line bundle associated to $u$ ($c$, resp.).  If there are strictly semistable sheaves of class $u$, one needs more conditions on $c$ to get $\lambda_u(c)$ well-defined (see Ch 8 in \cite{HL}).

Let $c,u\in K(X)$.  Assume both moduli spaces $M^X_H(c)$ and $M^X_H(u)$ are non-empty and the determinant line bundles $\lambda_c(u)$ and $\lambda_u(c)$ are well-defined  over $M^X_H(c)$ and $M^X_H(u)$, respectively.  According to \cite{LPst} (see \cite{LPst} p.9), if the following $(\bigstar)$ is satisfied,

($\bigstar $) \emph{for all $H$-semistable sheaves $\mf$ of class $c$ and $H$-semistable sheaves $\mg$ of class $u$ on $X$, $\Tor^i(\mf,\mg)=0$ $\forall~i\ge 1$, and $H^2(X,\mf\otimes \mg)=0$.}

then there is a canonical map
\begin{equation}\label{inmap} SD_{c,u}:H^0(M^X_H(c),\lambda_c(u))^{\vee}\ra H^0(M^X_H(u),\lambda_u(c)).\end{equation}
The strange duality conjecture asserts that $SD_{c,u}$ is an isomorphism.

Strange duality conjecture on curves was at first formulated (in \cite{Bea} and \cite{DT}) and has been proved (see \cite{MO1}, \cite{Bel2}). 
Strange duality on surfaces does not have a general formulation so far.  There is a special formulation due to Le Potier (see \cite{LPst} or \cite{Da2}). 
In this paper we choose $u=u_L:=[\mo_X]-[L^{-1}]+\frac{(L.(L+K_X))}2[\mo_x]$ with $x$ a single point in $X$, and $c=c^2_2:=2[\mo_{X}]-2[\mo_x]$.  Then ($\bigstar$) is satisfied and $SD_{c,u} $ is well-defined.  We prove the following theorem.\begin{thm}[Corollary \ref{ok}]\label{intro}Let $X$ be a Hirzebruch surface $\Sigma_e$ and $L=aG+bF$ with $F$ the fiber class and $G$ the section such that $G^2=-e$.  Then the strange duality map $SD_{c_2^2,u_L}$ as in (\ref{inmap}) is an isomorphism for the following cases.
\begin{enumerate}
\item $\min\{a,b\}\leq1$;
\item $\min\{a,b\}\geq2$, $e\neq 1$, $L$ ample;
\item $\min\{a,b\}\geq2$, $e=1$, $b\geq a+[a/2]$ with $[a/2]$ the integral part of $a/2$.
\end{enumerate}
\end{thm}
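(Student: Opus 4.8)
The plan is to deduce Theorem~\ref{intro} from a general criterion, proved separately, that $SD_{c^{2}_{2},u_L}$ is an isomorphism whenever the pair $(X,L)$ satisfies suitable positivity and genericity hypotheses; the three cases listed then record exactly when a Hirzebruch surface meets these hypotheses. So the work splits into: (a) prove the general criterion, and (b) verify its hypotheses on $\Sigma_e$ case by case.

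For (a), the conceptual input on the $\ml$ side is the Le Potier support morphism $\pi\colon\ml\to\ls$, whose fibre over a smooth member $C$ of the open locus $\ls^{\circ}\subset\ls$ is the Jacobian $\Pic^{g-1}(C)$, $g=p_a(C)$. One first checks that $\lambda_{u_L}([\mathcal{O}_x])=\pi^{*}\mathcal{O}_{\ls}(\pm 1)$, hence $\lambda_{u_L}(c^{2}_{2})$ restricts on such a fibre to $\mathcal{O}(2\Theta_C)$ up to a twist by $\pi^{*}\mathcal{O}_{\ls}(\pm 2)$, and that the theta divisor $D_{\mf}=\{\mg:h^{0}(X,\mf\otimes\mg)\neq 0\}$ of a $\mu$-stable locally free $\mf\in\wtt$ restricts on that fibre to the classical theta divisor of the rank-$2$ degree-$0$ bundle $\mf|_{C}$, an element of $|2\Theta_C|$. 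Thus, fibrewise over $\ls^{\circ}$, $SD_{c^{2}_{2},u_L}$ is governed by rank-$2$ strange duality on $C$, which is an isomorphism (Narasimhan--Ramanan, and in general Belkale / Marian--Oprea). The plan is then to conclude in two steps: (i) establish the numerical equality $h^{0}(\wtt,\lambda_{c^{2}_{2}}(u_L))=h^{0}(\ml,\lambda_{u_L}(c^{2}_{2}))$, the first computed by pushing forward along $\pi$ (base change plus vanishing of $H^{>0}$ of $2\Theta$ on the Jacobian, together with a control of the contribution of the non-integral and singular members of $\ls$ via the locus $\mli\subset\ml$ and the divisors $\dzi\subset\dzo\subset D_{\Theta_L}$, which encode the global structure of the theta divisor), the second by a Verlinde-type count or a birational model of $\wtt$; and (ii) prove $SD_{c^{2}_{2},u_L}$ is injective, equivalently that the divisors $D_{\mf}$, $\mf\in\wtt$, span $|\lambda_{u_L}(c^{2}_{2})|$, by combining the fibrewise curve statement with a spreading-out argument over $\ls$. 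An economical variant of (i)+(ii) is to show that both $SD_{c^{2}_{2},u_L}$ and its transpose $SD_{u_L,c^{2}_{2}}$ are injective, which yields the dimension equality automatically and sidesteps the explicit Verlinde computation.

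For (b), the hypotheses amount roughly to: a general $C\in\ls$ is smooth and irreducible with $\mli$ dense in $\ml$; $\wtt$ is irreducible with its $\mu$-stable locally free locus dense; restriction of sheaves from $X$ to a general $C$ is abundant and general enough; and the cohomology vanishings needed for the push-forward and base-change arguments hold. On $\Sigma_e$ with $L=aG+bF$ these are checked by the standard calculus of linear systems on a Hirzebruch surface: $\ls$ is base-point free exactly when $a\ge 0$ and $b\ge ae$, and then Bertini together with the explicit description of sections of $aG+bF$ produces a smooth irreducible general member of arithmetic genus $g=(a-1)(b-1)-\binom{a}{2}e$. When $\min\{a,b\}\le 1$ the system either has a fixed component (small $b$, $e\ge 1$) or its general member has genus $0$ (case $a=1$), so $\pi$ has point fibres, $\ml\cong\ls\cong\p^{N}$, and $SD_{c^{2}_{2},u_L}$ collapses to an elementary statement on projective space that is verified directly; this is why no ampleness is required there. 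When $\min\{a,b\}\ge 2$, $e\neq 1$ and $L$ is ample ($a>0$, $b>ae$), ampleness supplies the slack needed both for the vanishings and for the restriction-theorem input (Mehta--Ramanathan-type, with the genus large relative to $c_2=2$). When $\min\{a,b\}\ge 2$ and $e=1$ the section $(-1)$-curve $G$ ($G^{2}=-1$) weakens the positivity available for those vanishings and for the effective-cone/Brill--Noether estimates along curves meeting $G$, and a direct check shows the quantitative hypotheses survive precisely when $b\ge a+[a/2]$, which is the stated bound.

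The main obstacle is step (ii) --- the surjectivity half of strange duality --- together with the passage from fibres to $\ls$. Concretely, one must control $\wtt$ well enough (it is singular and a priori reducible when $c_2=2$) to see that the restrictions $\mf|_{C}$ of rank-$2$ sheaves from $X$ to a general smooth $C\in\ls$ supply enough theta divisors, and then control the behaviour of $\pi$ over the singular, non-reduced and non-integral members of $\ls$ --- where the compactified Jacobian fibres and their theta spaces change, and where $\mf$ may fail to be locally free --- so that fibrewise spanning genuinely globalizes; the analysis of $\dzi$, $\dzo$ and $\mli$ is aimed at exactly this point. By comparison the dimension count (especially if it is routed through injectivity of $SD_{c^{2}_{2},u_L}$ and of $SD_{u_L,c^{2}_{2}}$) and the numerical verifications on $\Sigma_e$ are mechanical. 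The positivity hypotheses on $L$ --- ampleness for $e\neq 1$, and $b\ge a+[a/2]$ for $e=1$ --- are exactly the inequalities under which this obstacle can be cleared.
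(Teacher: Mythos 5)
Your proposal follows a genuinely different route from the paper, and as written it has gaps I do not see how to close. The central one is the fibrewise reduction. Curve strange duality (Narasimhan--Ramanan, Belkale, Marian--Oprea) is a statement about the full moduli space $SU_C(2)$ of rank-$2$ bundles on $C$, of dimension $3g-3$; what you actually have over a fixed $C\in\ls$ is only the family of restrictions $\mf|_C$ for $\mf$ in the $5$-dimensional moduli space $\wtt$, and the subsystem of $|2\Theta_C|$ spanned by their theta divisors is controlled by sections of a fixed line bundle on that $5$-fold, not by $H^0(SU_C(2),\Theta)$. So neither injectivity nor surjectivity of $SD_{c_2^2,u_L}$ follows fibrewise from the curve theorem; moreover $H^0(\ml,\z_L^2(2))=H^0(\ls,(\pi_*\z_L^2)(2))$ is not assembled from the fibrewise theta spaces in a way that would let a fibrewise isomorphism globalize. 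A second gap is the proposed ``Verlinde-type count'' of $h^0(\wtt,\lambda_2(L))$: no such formula is available on a general rational surface, and avoiding any such computation is precisely the point of the paper's argument. Finally, you correctly flag the passage over singular, non-reduced and non-integral members of $\ls$ as the main obstacle, but you propose no mechanism for overcoming it.

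The paper's proof is structured quite differently: it is an induction on $L\mapsto L\otimes K_X$. The two exact sequences $0\to\z_L(2)\to\z_L^2(2)\to\z_L^2(2)|_{D_{\z_L}}\to0$ on $\ml$ and $0\to\lambda_2(L\otimes K_X)\to\lambda_2(L)\to\lambda_2(L)|_{\ts}\to0$ on $\wtt$ (where $\ts$ is the divisor of non-locally-free sheaves, whose associated line bundle is $\lambda_2(K_X^{-1})$ by Abe's work) fit $SD_{2,L}$ into a commutative ladder. The outer vertical maps are $\alpha_{\ts}$, identified with the rank-one strange duality $H^0(X^{[2]},L_{(2)})^\vee\to H^0(\ml,\z_L(2))$ (condition $\ca$), and $\beta_D$, identified with $SD_{2,L\otimes K_X}$ via an injection $H^0(D_{\z_L},\z_L^2(2)|_{D_{\z_L}})\hookrightarrow H^0(\mlk,\z^2_{L\otimes K_X}(2))$ constructed from relative Quot-schemes over the Hilbert scheme $H_{\ell}$ (conditions $\cb$, $\cb'$). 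A diagram chase then reduces $SD_{2,L}$ to $SD_{2,L\otimes K_X}$, and one descends until $D_{\z_L}=\emptyset$, where the statement is elementary. This also explains the bound $b\ge a+[a/2]$ on $\Sigma_1$: it is exactly what is needed for $L\otimes K_X^{\otimes j}$ to remain ample (or reach a base case) at every step of the descent, rather than an artifact of Brill--Noether or restriction-theorem estimates as your proposal suggests. Your treatment of the case $\min\{a,b\}\le 1$ does match the paper's (there $\ml\cong\ls$ and the map collapses to an elementary statement), but the two substantive cases require the inductive machinery above.
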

Although strange duality on surfaces is a very interesting problem, there are very few cases known.  Our result adds to previous work by the author (\cite{Yuan1},\cite{Yuan5}) and others (\cite{Abe}, \cite{Da1}, \cite{Da2}, \cite{GY}, \cite{MO2}, \cite{MO3}, \cite{MO4}). 

Especially, in \cite{Yuan5} we proved $SD_{c_2^2,u_L}$ is an isomorphism when $X=\p^2$.  The limitation of the method in \cite{Yuan5} is that: we have used Fourier transform on $\p^2$ which does not behave well on other rational surfaces.  In this paper we use a new strategy.  Actually we show the strange duality map $SD_{c^2_2,u_L}$ is an isomorphism under a list of conditions, and then check that all these conditions are fulfilled for cases in Theorem \ref{intro}.  So Theorem \ref{intro} is an application of our main theorem (Theorem \ref{maino}) to Hirzebruch surfaces and there are certainly more applications to other rational surfaces.

The structure of the paper is arranged as follows.  In \S~\ref{pre} we give preliminaries, including some useful properties of $M_X^H(c_2^2)$ (in \S~\ref{bas} and \S~\ref{ssos}) and a brief introduction to determinant line bundles and the set-up of strange duality (in \S~\ref{deter}).  \S~\ref{mare} is the main part.  In \S~\ref{alp} and \S~\ref{ssbeta} we prove the strange duality map is an isomorphism under a list of conditions; in \S~\ref{app} we show the main theorem (Theorem \ref{maino}) applies to cases on Hirzebruch surfaces.  Although the argument in \S~\ref{app} takes quite much space, the technique used there is essentially a combination of those in \cite{Yuan4} and \cite{Yuan5}.      

\begin{flushleft}\emph{Notations.} Let $\mf$, $\mg$ be two sheaves.
\begin{itemize}
\item $c_i(\mf)$ is the i-th Chern class of $\mf$;
\item $\chi(\mf)$ is the Euler characteristic of $\mf$;
\item $h^i(\mf)=dim~H^i(\mf)$;
\item $\text{ext}^i(\mf,\mg)=dim~\Ext^i(\mf,\mg)$, $\text{hom}(\mf,\mg)=dim~\Hom(\mf,\mg)$ and $\chi(\mf,\mg)=\sum_{i\geq0}(-1)^i\text{ext}^i(\mf,\mg)$;   
\item $Supp(\mf)$ or $C_{\mf}$ is the support of 1-dimensional sheaf $\mf$
\end{itemize}
\end{flushleft}
\textbf{Acknowledgements.}
The author was supported by NSFC grant 11301292.  

\section{Preliminaries.}\label{pre}
For any line bundle $L$ on $X$, define $u_L:=[\mo_X]-[L^{-1}]+\frac{(L.(L+K_X))}2[\mo_x]\in K(X)$ with $x$ a single point in $X$.
It is easy to check $u_{\mo_X}=0$ and $u_{L_1}+u_{L_2}=u_{L_1\otimes L_2}$.  If $L$ is nontrivially effective, i.e. $L\not\cong\mo_X$ and $H^0(L)\neq0$, let $\ls$ be the linear system, 
then $u_L$ is the class of 1-dimensional sheaves supported at curves in $\ls$ and of Euler characteristic 0.  

For $L$ nontrivially effective, denote by $\ml$ the moduli space $M^H_X(u_L)$. 
In fact a sheaf $\mf$ of class $u_L$ is semistable  (stable, resp.) if and only if $\forall~\mf'\subsetneq \mf$, $\chi(\mf')\leq0$ ($\chi(\mf')<0$, resp.).  Hence $\ml$ does not depend on the polarization $H$.  We ask $M(\mo_X,0)$ to be a single point standing for the zero sheaf.

Let $\crn=r[\mo_{X}]-n[\mo_x]\in K(X)$ with $x$ a single point on $X$.
Denote by $\wrn$ the moduli space $M_X^H(\crn)$ (but $\wrn$ might depend on $H$).  
In this paper we mainly focus on $\wtt$ for $X$ a rational surface.
 
For any $L,r,n$, $u_L$ and $\crn$ are orthogonal with respect to the quadratic form $\<,\>$ on $K(X)$.
\subsection{Some basic properties of $W(2,0,2)$.}\label{bas}
\begin{defn}\label{hgen}We say the polarization $H$ is \textbf{$c_2^2$-general}, if for any $\xi\in H^2(X,\bz)\cong\Pic(X)$ such that $\xi.H=0$ and $\xi^2\geq -2$, we have $\xi=0$.
\end{defn}
\begin{rem}\label{bio}Since $K_X.H<0$, $\xi.H=0\Rightarrow \xi^2\leq -2$ for any $\xi\in \Pic(X)$.  This is because $H^0(\mo_X(\pm\xi))=0$ by $\xi.H=0$ and $H^2(\mo_X(\pm\xi))=H^0(\mo_X(K_X\mp\xi))^{\vee}=0$ by $(K_X\mp\xi).H<0$, hence $\chi(\mo_X(\xi)\oplus\mo_X(-\xi))=2+\xi^2\leq0$.
\end{rem}
 \begin{lemma}\label{ssl}Let $\mf$ be an $H$-semistable sheaf in class $c_2^2$.  If $\mf$ is not locally free, then it is strictly semistable and S-equivalent to $\mi_x\oplus\mi_y$ with $x,y$ two single points on $X$.  Moreover, if $H$ is $c_2^2$-general, then $\mf$ is $H$-stable if and only if $\mf$ is locally free.
 \end{lemma}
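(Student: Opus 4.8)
The plan rests on one observation: every $H$-semistable sheaf $\mf$ of class $c_2^2$ is torsion-free with $\chi(\mf)=0$ and has vanishing cohomology, $h^i(X,\mf)=0$ for all $i$. Indeed a nonzero section would give $\mo_X\hookrightarrow\mf$, but the reduced Hilbert polynomial $p(\mo_X)$ exceeds the reduced Hilbert polynomial of $c_2^2$ by the constant $\chi(\mo_X)=1$, contradicting semistability, so $h^0(\mf)=0$; by Serre duality $h^2(\mf)=\dim\Hom(\mf,K_X)$, and a nonzero map from the slope-zero semistable $\mf$ to the line bundle $K_X$ is impossible because its image would be a rank-one quotient of $\mf$ and a rank-one subsheaf of $K_X$, while $p(K_X)(m)-p(\mf)(m)=1+m\,K_X.H\to-\infty$ as $m\to\infty$ (here $K_X.H<0$ enters); so $h^2(\mf)=0$, and then $\chi(\mf)=0$ forces $h^1(\mf)=0$. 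The identity $\chi(\mo_X)=1$ is used, which holds on the rational surface $X$.

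I would first prove the equivalent statement ``$\mf$ $H$-stable $\Rightarrow$ $\mf$ locally free''. Fix a closed point $v$ and apply $\Hom(-,\mf)$ to $0\to\mi_v\to\mo_X\to\mo_v\to 0$; using $\Hom(\mo_v,\mf)=h^0(\mf)=0$ and $\Ext^1(\mo_X,\mf)=h^1(\mf)=0$ one gets $\Hom(\mi_v,\mf)\cong\Ext^1(\mo_v,\mf)$. By the local-to-global spectral sequence, and since $\mathcal{H}om(\mo_v,\mf)=0$, this is isomorphic to $H^0(X,\mathcal{E}xt^1(\mo_v,\mf))$, the stalk of the skyscraper $\mathcal{E}xt^1(\mo_v,\mf)$ at $v$, namely $\Ext^1_{\mo_{X,v}}(\bc,\mf_v)$. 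Over the two-dimensional regular local ring $\mo_{X,v}$ this $\Ext^1$ vanishes iff $\mf_v$ has depth two, i.e. is free (Auslander--Buchsbaum), i.e. iff $\mf$ is locally free at $v$. Hence if $\mf$ is not locally free at some $v$ we obtain a nonzero, hence injective (torsion-free target), map $\mi_v\hookrightarrow\mf$; since $p(\mi_v)=p(\mf)$, the sheaf $\mf$ is strictly semistable. This already gives the first assertion that a non--locally-free semistable $\mf$ is strictly semistable.

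To identify the $S$-equivalence class, take a Jordan--Hölder filtration of the strictly semistable, rank-two $\mf$: it has exactly two stable rank-one factors of reduced Hilbert polynomial $p(\mf)$ (a single rank-two factor would contradict strict semistability), necessarily of the form $\mo_X(D)\otimes\mi_{Z_1}$ and $\mo_X(-D)\otimes\mi_{Z_2}$ with $Z_1,Z_2$ zero-dimensional of lengths $\ell(Z_1),\ell(Z_2)$. Equating Hilbert polynomials with that of $c_2^2$ forces $D.H=0$ together with
\[
\ell(Z_1)=1+\frac{D^2-D.K_X}{2}\ \geq\ 0,\qquad \ell(Z_2)=1+\frac{D^2+D.K_X}{2}\ \geq\ 0 .
\]
Adding these gives $D^2\geq-2$; on the other hand $D.H=0$ with $K_X.H<0$ forces $D=0$ or $D^2\leq-2$ by Remark~\ref{bio}. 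So either $D=0$, whence $\ell(Z_1)=\ell(Z_2)=1$ and the factors are $\mi_x,\mi_y$ for points $x,y$; or $D^2=-2$, which forces $D.K_X=0$, $\ell(Z_1)=\ell(Z_2)=0$ and factors $\mo_X(\pm D)$. When $\mf$ is not locally free the second possibility is excluded, because an extension of two line bundles is locally free; hence $\mf$ is $S$-equivalent to $\mi_x\oplus\mi_y$, which finishes the first part.

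For the ``moreover'': ``$\mf$ stable $\Rightarrow$ $\mf$ locally free'' was shown above, so it remains, assuming $H$ is $c_2^2$-general, to show no strictly semistable $\mf$ of class $c_2^2$ is locally free. If $\mf$ is strictly semistable, the Jordan--Hölder analysis above leaves two cases: either the factors are $\mi_x,\mi_y$, and then the inclusion $\mi_x\hookrightarrow\mf$ together with the $\Ext$ computation of the second paragraph forces $\mf$ to be non--locally-free at $x$; or the factors are $\mo_X(\pm D)$ with $D\neq0$, $D.H=0$, $D^2=-2$, which is forbidden by $c_2^2$-generality. In both cases $\mf$ is not locally free, as wanted. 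The only non-formal input is the local characterization of local freeness via $\mathcal{E}xt^1(\mo_v,\mf)$, and the genuine subtlety throughout is the spurious branch $D^2=-2$ in the Jordan--Hölder step, which is killed either by local freeness (first part) or by $c_2^2$-generality (``moreover''); one could instead argue through $\mf\hookrightarrow\mf^{\vee\vee}$ and the Bogomolov inequality, but that route is more cumbersome in the case $c_2(\mf^{\vee\vee})=0$.
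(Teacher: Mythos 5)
Your proof is correct, but it takes a genuinely different route from the paper's. The paper identifies non-locally-free semistable sheaves by passing to the reflexive hull: $\mf^{\vee\vee}$ has class $c^2_1$ or $c^2_0$, the vanishing $H^2(\mf^{\vee\vee})=0$ forces $h^0(\mf^{\vee\vee})>0$, and a case analysis ($\mf^{\vee\vee}\cong\mo_X^{\oplus 2}$ or an extension of $\mi_x$ by $\mo_X$) produces explicit destabilizing ideal sheaves and an explicit diagram exhibiting the S-equivalence to $\mi_x\oplus\mi_y$. You instead avoid the double dual entirely: from $h^0(\mf)=h^1(\mf)=0$ you get $\Hom(\mi_v,\mf)\cong\Ext^1(\mo_v,\mf)\cong\Ext^1_{\mo_{X,v}}(\bc,\mf_v)$, so failure of local freeness at $v$ is \emph{equivalent} to the existence of a destabilizing $\mi_v\hookrightarrow\mf$; the S-equivalence class is then pinned down by a purely numerical Jordan--H\"older analysis, with the spurious branch $\mo_X(\pm D)$, $D^2=-2$, killed because an extension of line bundles is locally free. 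Your Ext/depth criterion buys a two-way implication that makes the ``moreover'' essentially automatic (and also recovers the content of Remark \ref{ngx} as the surviving branch when $H$ is not $c_2^2$-general), whereas the paper's construction is more explicit about the sheaves involved; both arguments ultimately rest on the same numerical input, namely Remark \ref{bio} applied to the class $D$ with $D.H=0$ and $D^2\geq -2$. One small point of care, which you handle correctly: Remark \ref{bio} must be read as applying to $\xi\neq 0$, so the dichotomy is $D=0$ or $D^2=-2$, exactly as you state.
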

 \begin{proof}First assume $\mf$ is not locally free, then its reflexive hull $\mf^{\vee\vee}$ is locally free of class $c^2_i$ with $i=1$ or 0.  $H^2(\mf^{\vee\vee})\cong H^2(\mf)\cong\Hom(\mf,K_X)^{\vee}=0$ by $K_X.H<0$ and the semistability of $\mf$.  Hence $dim~H^0(\mf^{\vee\vee})\geq \chi(\mf^{\vee\vee})=2-i>0.$  Therefore either $\mf^{\vee\vee}\cong \mo_X^{\oplus2}$ or $\mf^{\vee\vee}$ lies in the following sequence
 \begin{equation}\label{refhu}0\ra\mo_X\xrightarrow{\jmath}\mf^{\vee\vee}\ra \mi_x\ra 0,\end{equation}
 where $\mi_x$ is the ideal sheaf of some single point $x$ on $X$.  
 
 If $\mf^{\vee\vee}$ lies in (\ref{refhu}), then we have
 \begin{equation}\label{fref}0\ra\mf\ra\mf^{\vee\vee}\xrightarrow{p}\mt_1\ra0,\end{equation}
 where $\mt_1$ is a 0-dimensional sheaf with $\chi(\mt_1)=1$ and hence $\mt_1\cong\mo_y$ for some single point $y\in X$.  Compose maps $\jmath$ in (\ref{refhu}) and $p$ in (\ref{fref}), the map $p\circ \jmath:\mo_X\ra \mt_1$ is not zero because otherwise $\mo_X$ would be a subsheaf of $\mf$.  Therefore $p\circ\jmath$ is surjective with kernel isomorphic to $\mi_y$ which is a subsheaf of $\mf$ destabilizing $\mf$.  Hence $\mf$ is not stable and S-equivalent to $\mi_x\oplus\mi_y$.
 
If $\mf^{\vee\vee}\cong\mo_X^{\oplus2}$, then we have the following exact sequence
 \[0\ra\mf\ra\mo_X^{\oplus2}\ra\mt_2\ra0,\]
where $\mt_2$ is a 0-dimensional sheaf with $\chi(\mt_2)=2$.  We also have
\[0\ra\mo_x\ra\mt_2\ra\mo_y\ra 0,\]
where $x,y$ are two single points on $X$ (it is possible to have $x=y$).  Hence we have the following diagram
\[\xymatrix@R=0.4cm@C=0.5cm{&0\ar[d] &0\ar[d] &0\ar[d]&
\\ 0\ar[r]&\mi_x\ar[r]\ar[d]&\mo_X\ar[r]\ar[d]&\mo_x\ar[r]\ar[d]&0\\
0\ar[r]&\mf\ar[r]\ar[d]&\mo_X^{\oplus2}\ar[r]\ar[d]&\mt_2\ar[r]\ar[d]&0\\
0\ar[r]&\mi_y\ar[r]\ar[d]&\mo_X\ar[r]\ar[d]&\mo_y\ar[r]\ar[d]&0\\
&0 &0&0 &
}.\]
Hence $\mf$ is $S$-equivalent to $\mi_x\oplus\mi_y$.

Now assume $H$ is $c^2_2$-general.  We only need to show that any semistable bundle $\mf$ of class $c_2^2$ is stable.  If $\mf$ is strictly semistable, then we have the following sequence
 \[0\ra \mi_Z(\xi)\ra \mf\ra\mi_W(-\xi)\ra0,\]
 where $\xi.H=0$ and $\mi_Z,\mi_W$ are ideal sheaves of  0-dimensional subschemes $Z,W$ of $X$ such that the length $len(Z)=len(W)=1+\xi^2/2\geq 0$.  Since $H$ is $c^2_2$-general, $\xi=0$ and $\mi_Z$ is a subsheaf of $\mf$.
 Hence so is $\mo_X$ because $\mf$ is locally free,
 which is a contradiction since $H^0(\mf)=0$ by semistability.  Hence $\mf$ is stable.  The lemma is proved.
\end{proof}

Denote by $\ts$ the closed subset of $\wtt$ consisting of non locally free sheaves, then set-theoretically $\ts$ is isomorphic to the second symmetric power $X^{(2)}$ of $X$ by Lemma \ref{ssl}.  $\ts$ is of codimension 1 in $\wtt$.  In \S\ref{ssos} we will give a scheme-theoretic structure of $\ts$ and show that it is a divisor associated to some line bundle.

\begin{rem}\label{ngx}If $H$ is not $c^2_2$-general, then all strictly semistable vector bundle are S-equivalent to $\mo_X(\xi)\oplus\mo_X(-\xi)$ with $\xi\in \Pic(X)$, $\xi.H=\xi.K_X=0$ and $\xi^2=-2$.
\end{rem}

\subsection{Determinant line bundles and strange duality.}\label{deter}~~

To set up the strange duality conjecture, we briefly introduce so-called determinant line bundles on the moduli spaces of semistable sheaves.   We refer to Chapter 8 in \cite{HL} for more details.  

For a Noetherian scheme $Y$, we denote by $K(Y)$ the Grothendieck groups of coherent sheaves on $Y$ and $K^0(Y)$ be the subgroup of $K(Y)$ generated by locally free sheaves.  Then $K^0(X)=K(X)$ since $X$ is smooth and projective.  

Let $\E$ be a flat family of coherent sheaves of class $c$ on $X$ parametrized by a noetherian scheme $S$, then $\E\in K^0(X\times S)$.
Let $p:X\times S\to S$, $q:X\times S\to X$ be the projections.
Define $\lambda_\E:K(X)\to \Pic(S)$ to be the composition of the following homomorphisms:
\begin{equation}\label{dlb}
\xymatrix@C=0.3cm{
  K(X)=K^0(X) \ar[rr]^{~~q^{*}} && K^0(X\times S) \ar[rr]^{.[\E]} && K^0(X\times
S) \ar[rr]^{~~~R^{\bullet}p_{*}} && K^0(S)\ar[rr]^{det^{-1}} &&
\Pic(S),}\end{equation}
where $q^*$ is the pull-back morphism, $[\F].[\G]:=\sum_i (-1)^i[\Tor^i(\F,\G)]$, 
and $R^{\bullet}p_{*}([\F])=\sum_i(-1)^i[R^ip_{*}\F].$  Proposition 2.1.10 in \cite{HL} assures that $R^{\bullet}p_{*}([\F])\in K^0(S)$ for any $\F$ coherent and $S$-flat .  

For any $u\in K(X)$, $\lambda_{\E}(u)\in \Pic(S)$ is called the \textbf{determinant line bundle} associated to $u$ induced by the family $\E$.  Notice that the definition we use here is dual to theirs in \cite{HL}. 

Let $S=M_X^H(c)$, 
then there is in general no such universal family $\E$ over $X\times M^H_X(c)$, and even if it exists, there is ambiguity caused by tensoring with the pull-back of a line bundle on $M^H_X(c)$.  Thus to get a well-defined determinant line bundle $\lambda_c(u)$ over $M^H_X(c)$, we need look at the good $GL(V)$-quotient $\Omega(c)\ra M_X^H(c)$ with $\Omega(c)$ an open subset of some Quot-scheme and there is a universal quotient $\widetilde{\E}$ over $X\times \Omega(c)$.  $\lambda_c(u)$ is then defined by descending the line bundle $\lambda_{\widetilde{\E}}(u)$ over $\Omega(c)$.  $\lambda_{\widetilde{\E}}(u)$ descends if and only if it satisfies the ``descent condition" (see Theorem 4.2.15 in \cite{HL}), which implies that $u$ is orthogonal to $c$ with respect to the quadratic form $\<~,~\>$.  Hence the homomorphism $\lambda_c$ is only defined over a subgroup of $K(X)$.

Now we focus on $\ml$ and $\wrn$.  As we have seen, $u_L$ is orthogonal to $\crn$ for any $L,r,n$.  

Let $\lcl$ be the determinant line bundle associated to $u_L$ over (an open subset of) $\wrn$.  We denote simply by $\lrl$ if $r=n$.  By checking the descent condition we see that $\ltl$ is always well-defined over the stable locus $\wtt^s$ and $\ts$, hence it is well-defined over all $\wtt$ if $H$ is $c_2^2$-general.  If $H$ is not $c_2^2$-general, then $\ltl$ is well-defined over point $[\mo_X(-\xi)\oplus\mo_X(\xi)]$ if and only if $\xi.L=0$.  We denote by $\wrn^L$ the biggest open subset of $\wrn$ where $\lcl$ is well-defined.  Notice that the stable locus $\wrn^s\subset\wrn^L$.  By Remark \ref{ngx}, $\wtt^L=\wtt^{L\otimes K_X}$.
 
On the other hand, let $\lambda_{L}(\crn)$ be the determinant line bundle associated to $\crn$ over $\ml$, then $\lambda_{L}(\crn)$ is always well-defined over the whole moduli space.  We have the following proposition which is analogous to Theorem 2.1 in \cite{Da2}.
\begin{prop}\label{glob}(1) There is a canonical section, unique up to scalars, $\sigma_{\crn,u_L}\in H^0(\wrn^L\times\ml,\lcl\boxtimes\lambda_L(\crn))$ whose zero set is 
$$\mathtt{D}_{\crn,u_L}:=\big\{([\mf],[\mg])\in   \wrn^L\times \ml \bigm| h^0(X,\mf\otimes \mg)=h^1(X,\mf\otimes\mg)\ne 0\big\}.$$

(2) The section $\sigma_{\crn,u_L}$ defines a linear map up to scalars
\begin{equation}
\label{SDmap}
SD_{\crn,u_L}:H^0(\wrn^L,\lcl)^\vee \to H^0(\ml,\lambda_L(\crn)).
\end{equation}

(3) Denote by $\sigma_{\mf}$ the restriction of $\sigma_{\crn,u_L}$ to $\{\mf\}\times\ml$.  $\sigma_{\mf}$ only depends on the S-equivalence class of $\mf$.  

(4) If $\sigma_{\crn,u_L}$ is not identically zero, then by assigning $\mf$ to $\sigma_{\mf}$ we get a rational map $\Phi: \wrn^L\ra\p(H^0(\ml,\lambda_L(\crn)))$.  Similarly we have a rational map $\Psi:\ml\ra\p(H^0(\wrn^L,\lcl))$.  Moreover If the image of $\Phi$ is not contained in a hyperplan, then $SD_{\crn,u_L}$ is injective; if the image of $\Psi$ is not contained in a hyperplan, then $SD_{\crn,u_L}$ is surjective. 

\end{prop}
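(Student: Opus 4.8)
The plan is to construct the section $\sigma_{\crn,u_L}$ by pushing forward a determinant-of-cohomology construction along a product of Quot-scheme covers, exactly in the spirit of the standard ``theta section'' construction (cf. Chapter 8 of \cite{HL} and Theorem 2.1 of \cite{Da2}). Let $\Omega(\crn)\to\wrn$ be the good quotient presentation used to define $\lcl$, with universal quotient $\widetilde{\E}$ over $X\times\Omega(\crn)$, and similarly let $\Omega(u_L)\to\ml$ with universal quotient $\widetilde{\G}$ over $X\times\Omega(u_L)$. On $X\times\Omega(\crn)\times\Omega(u_L)$ form the complex $R^\bullet p_*\big(q_{13}^*\widetilde{\E}\otimes^{L} q_{23}^*\widetilde{\G}\big)$, where $p$ projects to $\Omega(\crn)\times\Omega(u_L)$. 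Because $\crn$ and $u_L$ are orthogonal with respect to $\<\,,\,\>$, this complex has rank $0$, so it is represented by a two-term complex $[\mathcal{A}_0\xrightarrow{d}\mathcal{A}_1]$ of vector bundles of equal rank; taking $\det(d)$ gives a canonical section of $\det(\mathcal{A}_1)\otimes\det(\mathcal{A}_0)^{-1}$, which by the definition (\ref{dlb}) is precisely $\lambda_{\widetilde{\E}}(u_L)\boxtimes\lambda_{\widetilde{\G}}(\crn)$ up to the sign conventions. Condition ($\bigstar$)-type vanishing — here $H^2(X,\mf\otimes\mg)=0$ for $\mf$ semistable of class $\crn$, which holds by $K_X.H<0$ and semistability as in the proof of Lemma \ref{ssl}, and $\Tor^{>0}(\mf,\mg)=0$ since $\mg$ is $1$-dimensional with support meeting $\mf$ properly, plus the fact that $\mf$ is locally free off a finite set — ensures $R^ip_*=0$ for $i\geq 2$, so the two-term representative really is concentrated in degrees $0,1$ and $\det(d)$ vanishes exactly where $h^0=h^1\neq 0$. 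This identifies the zero locus with $\mathtt{D}_{\crn,u_L}$, giving (1) after descent.

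For the descent step, one checks the linearization of $\lambda_{\widetilde{\E}}(u_L)$ with respect to the $GL$-action agrees with what is needed for descent to $\wrn^L$ — this is literally the reason $\wrn^L$ was defined as the largest open set where $\lcl$ descends — and $\lambda_{\widetilde{\G}}(\crn)$ descends over all of $\ml$ as already noted before the proposition. The $GL(V)$-invariance of $\det(d)$ is automatic from functoriality of the construction, so $\sigma_{\crn,u_L}$ descends to a section over $\wrn^L\times\ml$. Part (2) is then formal: a section of $\lcl\boxtimes\lambda_L(\crn)$ over a product is the same datum as an element of $H^0(\wrn^L,\lcl)\otimes H^0(\ml,\lambda_L(\crn))$ (using that $\ml$ and $\wrn^L$ are projective and the line bundles are fixed on each factor), which is the same as a linear map $H^0(\wrn^L,\lcl)^\vee\to H^0(\ml,\lambda_L(\crn))$; this is $SD_{\crn,u_L}$. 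Uniqueness up to scalars in (1) follows because $h^0$ of the relevant line bundle on the product, restricted to the locus where $\det(d)$ is the Koszul-type section, is spanned by it — more simply, any two sections with the same reduced zero divisor differ by a nowhere-vanishing function, hence a scalar, once one knows $\mathtt{D}_{\crn,u_L}$ is a divisor and $\Pic$ has no torsion issues here; alternatively uniqueness is inherited from the uniqueness of the determinant-line-bundle construction.

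Part (3): the restriction $\sigma_{\mf}$ to $\{\mf\}\times\ml$ is, by the construction, the determinant section of $R^\bullet p_*(\mf\otimes^L\widetilde{\G})$ on $\Omega(u_L)$ descended to $\ml$, and this depends on $\mf$ only through its class in $K(X)$ together with the line bundle $\lambda_{\widetilde{\E}}(u_L)|_{\{\mf\}}$; but S-equivalent sheaves have the same class and, crucially, give the same point of $\wrn^L$ hence the same fibre of $\lcl$ — so $\sigma_{\mf}$ is unchanged along an S-equivalence class. Part (4): if $\sigma_{\crn,u_L}\not\equiv 0$ then $\mf\mapsto\sigma_{\mf}\in H^0(\ml,\lambda_L(\crn))$ is a nonzero assignment defined on a dense open set, hence a rational map $\Phi$ to the projective space of that $H^0$, and symmetrically for $\Psi$. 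The injectivity/surjectivity statements are the standard duality-pairing argument: $SD_{\crn,u_L}$ viewed as an element of $H^0(\wrn^L,\lcl)\otimes H^0(\ml,\lambda_L(\crn))$ fails to be injective iff it lies in $V\otimes H^0(\ml,\lambda_L(\crn))$ for some proper subspace $V\subsetneq H^0(\wrn^L,\lcl)$, equivalently iff $\{\sigma_{\mf}\}_{\mf}$ spans a subspace annihilated by the corresponding functionals — i.e.\ iff the image of $\Phi$ lies in a hyperplane; likewise surjectivity fails iff the image of $\Psi$ lies in a hyperplane.

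The main obstacle I anticipate is the bookkeeping in the descent step — verifying that the linearization of $\lambda_{\widetilde{\E}}(u_L)\boxtimes\lambda_{\widetilde{\G}}(\crn)$ on $\Omega(\crn)\times\Omega(u_L)$ descends over the (possibly non-stable) boundary locus of $\wrn^L$, and checking that the section $\det(d)$, a priori only invariant, is also invariant under the stabilizers at the strictly semistable points that survive in $\wrn^L$ — so that it genuinely descends rather than merely being invariant on the stable locus. The rank-$0$/two-term-representative argument and the identification of the zero locus with $\mathtt{D}_{\crn,u_L}$ are routine given ($\bigstar$)-type vanishing; the only other place to be careful is confirming $H^2(X,\mf\otimes\mg)=0$ uniformly for all semistable $\mf$ of class $\crn$, which we already get from $\chi(\mf,K_X)\le 0$-style arguments as in Lemma \ref{ssl}.
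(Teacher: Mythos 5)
Your construction is exactly the standard determinant-of-a-two-term-complex (``theta section'') argument that the paper invokes by citing Danila's Theorem~2.1 in \cite{Da2} (and \cite{GY} for parts (3)--(4)), so your approach coincides with the paper's proof, which consists only of those citations. One small simplification: $H^2(X,\mf\otimes\mg)=0$ is automatic because $\mf\otimes\mg$ is supported on the one-dimensional support of $\mg$, so the appeal to $K_X.H<0$ and semistability is not needed at that step.
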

\begin{proof}The proof of Theorem 2.1 in \cite{Da2} also applies to our case although the surface may not be $\p^2$.  For statement (3) and (4), one can also see Lemma 6.13 and Proposition 6.17 in \cite{GY}.
\end{proof}

The map $SD_{\crn,u_L}$ in (\ref{SDmap}) is call the \textbf{strange duality map}, and Le Potier's strange duality is as follows (also see Conjecture 2.2 in \cite{Da2})
\begin{con/que}If both $\wrn^L$ and $\ml$ are non-empty, then is $SD_{\crn,u_L}$ an isomorphism? 
\end{con/que}
 
We denote by $\z_L$ the determinant line bundle associated to $c^1_0=[\mo_{X}]$ on $\ml$.  
Then $\z_L$ defines a divisor $D_{\z_L}$ which consists of sheaves with non trivial global sections.  Since $\lambda_L$ is a group homomorphism, by Proposition 2.8 in \cite{LP2}, we have that $\lambda_L(\crn)\cong\z_L^{\otimes r}\otimes\pi^{*}\mo_{\ls}(n)=:\z^r_L(n)$ where $\pi:\ml\ra\ls$ sends each sheaf to its support.

In this paper we study the following strange duality map for $X$ a rational surface
\begin{equation}\label{sdmt}
SD_{2,L}:H^0(\wtt^L,\ltl)^{\vee}\ra H^0(\ml,\z_L^2(2)).
\end{equation}
   


\subsection{Scheme-theoretic structure of $\ts$ on $\wtt$.}\label{ssos}
~~

$\ts$ consists of non locally free sheaves in $\wtt$.  Recall we have a good quotient $\rho:\Omega_2\ra\wtt$.  Let $\widetilde{\ts}=\rho^{-1}(\ts).$

Set-theoretically $\ts\cong X^{(2)}$.  Let $\Delta\subset X^{(2)}$ be the singular locus and $\Delta\cong X$.  Define $\ts^o=\ts-\Delta$,
$\wtt^o=\wtt^L-\Delta$, $\widetilde{\ts^o}=\rho^{-1}(\ts^o)$ and $\Omega^o_2=\rho^{-1}(\wtt^o)$.  Let $\F$ ($\F^o$, resp.) be the universal quotient over $X\times\Omega_2$ ($X\times\Omega_2^o$, resp.).
We then have the following proposition due to Abe (see Section 3.4 and Section 5.2 Proposition 3.7 and Proposition 5.2 in \cite{Abe})

\begin{prop}\label{stts}(1) The second Fitting ideal $\mathtt{Fitt}_2(\F^o)$ of $\F^o$ defines a smooth closed subscheme $\widehat{\ts^o}$ of $X\times\Omega_2^o$ supported at the set 
$$\{(x,[q:\mo_X(-mH)\otimes  V\twoheadrightarrow\mf])|dim_{k(x)}\mf_x\otimes k(x)>2\}\subset X\times\Omega_2^o.$$
 i.e. $\widehat{\ts^o}$ consists of points $(x,[q:\mo_X(-mH)\twoheadrightarrow\mf])$ such that $\mf_x$ is not free.  
 
(2) We have a surjective map $p_{\Omega}:\widehat{\ts^o}\ra\widetilde{\ts^o}$ induced by the projection $p_{\Omega}:X\times\Omega_2\ra\Omega_2$.  We give a scheme structure of $\widetilde{\ts^o}$ by letting its defining ideal be the kernel of $\mo_{\Omega_2^o}\ra p_{\Omega*}\mo_{\widehat{\ts^o}}$.  Then $\widetilde{\ts^o}$ is a normal crossing divisor in $\Omega_2^o$ with $\widehat{\ts^o}\ra\widetilde{\ts^o}$ the normalization.

(3) The line bundle associated to the divisor $\widetilde{\ts^o}$ on $\Omega_2^o$ is $\lambda_{\F^o}(u_{K_X^{-1}})$.
\end{prop}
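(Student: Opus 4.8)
The plan is to follow Abe's analysis of the boundary of $\wtt$ in \S 3.4 and \S 5.2 of \cite{Abe}, checking that his arguments use nothing about the base beyond $H^1(\mo_X)=H^2(\mo_X)=0$ and $K_X.H<0$, which hold for our rational surface. Everything rests on the local structure of a non-locally-free $\mf$ with $[\mf]\in\wtt^o$: by Lemma~\ref{ssl}, $\mf^{\vee\vee}$ is locally free with $\mf^{\vee\vee}/\mf$ of length at most $2$, and at a point $x$ where $\mf$ is singular one has $\mf_x\cong\km_x\oplus\mo_{X,x}$, so $\mf_x$ needs exactly three generators and $\mathtt{Fitt}_2(\mf)$ is locally the reduced ideal $\km_x$. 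I would fix a point $(x_0,[q_0])$ of $X\times\Omega_2^o$ with $\mf_0$ singular at $x_0$, pick three local generators of $\F^o$ lifting a basis of $(\mf_0)_{x_0}\otimes k(x_0)$, and obtain on a neighbourhood $U$ a resolution $0\to\mathcal{L}\xrightarrow{A}\mo_U^{\oplus 3}\to\F^o|_U\to 0$ with $\mathcal{L}$ a line bundle --- the kernel is flat over $\Omega_2^o$ with locally free fibres, hence locally free --- so that $\mathtt{Fitt}_2(\F^o)|_U=(f_1,f_2,f_3)$, the three entries of $A$.

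For Part~(1) I would compute the differentials of $f_1,f_2,f_3$ at $(x_0,[q_0])$. Arranging the generators so that the restriction of $A$ to $X\times\{[q_0]\}$ is the Koszul relation of $\km_{x_0}$ padded with a zero, namely $(v,-u,0)^{t}$ in local coordinates $u,v$ at $x_0$, the $X$-components of $df_1,df_2$ are $dv,-du$ while $f_3$ vanishes on $X\times\{[q_0]\}$, so $df_1,df_2,df_3$ are linearly independent precisely when $df_3\neq0$; and $df_3\neq0$ follows because $[q_0]$ is a limit of points of $\Omega_2^o$ with $\F^o$ locally free near $x_0$ (the locally free locus is dense in $\wtt$), so along such a deformation $(f_1,f_2,f_3)$ becomes the unit ideal near $x_0$, forcing $f_3$ to move to first order in an $\Omega_2^o$-direction. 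Hence $\widehat{\ts^o}=V(\mathtt{Fitt}_2(\F^o))$ is smooth of codimension $3$ with the stated support. The same computation shows $p_\Omega|_{\widehat{\ts^o}}$ is an immersion (its tangent map has kernel $\ker(df_1,df_2,df_3)\cap T_{x_0}X=0$) onto a smooth divisor; when $\mf_0$ is singular only at $x_0$ this is an isomorphism onto $\widetilde{\ts^o}$ near $[q_0]$, and when $\mf_0$ is singular at two distinct points $x_0\neq y_0$ the two smooth branches $D_{x_0}=V(f)$, $D_{y_0}=V(g)$ meet transversally (the first-order deformations of $\mf_0$ keeping it singular near $x_0$, respectively near $y_0$, are governed by the local deformations at the two distinct points and hence span independent directions in $\Ext^1(\mf_0,\mf_0)$). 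Then $p_{\Omega*}\mo_{\widehat{\ts^o}}=\mo_{D_{x_0}}\oplus\mo_{D_{y_0}}$ locally, so $\ker(\mo_{\Omega_2^o}\to p_{\Omega*}\mo_{\widehat{\ts^o}})=(f)\cap(g)=(fg)$ and $\widetilde{\ts^o}$ is a normal crossing divisor; since $p_\Omega$ is finite and one-to-one over a dense open (the general non-locally-free sheaf parametrized by $\wtt^o$ is a non-split extension, singular at a single point), $\widehat{\ts^o}\to\widetilde{\ts^o}$ is the normalization, which is Part~(2).

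Part~(3) is the step I expect to cost the most. Since $\lambda_{\F^o}\colon K(X)\to\Pic(\Omega_2^o)$ is a homomorphism and $u_{K_X^{-1}}=[\mo_X]-[K_X]$, write $\lambda_{\F^o}(u_{K_X^{-1}})=\det(R^{\bullet}p_{*}\F^o)^{-1}\otimes\det\!\big(R^{\bullet}p_{*}(\F^o\otimes q^{*}K_X)\big)$. Because $\F^o$ is flat over $\Omega_2^o$ with fibres of projective dimension $\le1$, it has projective dimension $\le1$ on $X\times\Omega_2^o$, so there is a global locally free resolution $0\to E_1\to E_0\to\F^o\to0$, and $\mathtt{Fitt}_2(\F^o)$ is the ideal of maximal minors of $E_1\to E_0$; by Part~(1) this ideal has the expected codimension $3$, so the Eagon--Northcott complex of $E_1\to E_0$ resolves $\mo_{\widehat{\ts^o}}$. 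On the other hand $p_\Omega$ is finite, so $R^{\bullet}p_{*}\mo_{\widehat{\ts^o}}=[p_{\Omega*}\mo_{\widehat{\ts^o}}]$, and by Part~(2) there is an exact sequence $0\to\mo_{\widetilde{\ts^o}}\to p_{\Omega*}\mo_{\widehat{\ts^o}}\to\mathcal{Q}\to0$ with $\mathcal{Q}$ supported in codimension $\ge2$; since $\det\mathcal{Q}=\mo_{\Omega_2^o}$ and $\det\mo_{\widetilde{\ts^o}}=\mo_{\Omega_2^o}(\widetilde{\ts^o})$, this yields $\mo_{\Omega_2^o}(\widetilde{\ts^o})=\det(R^{\bullet}p_{*}\mo_{\widehat{\ts^o}})$. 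Applying $\det R^{\bullet}p_{*}$ term by term to the Eagon--Northcott complex expresses the right-hand side through $E_0,E_1$; the main obstacle is the final bookkeeping showing that this expression collapses to $\lambda_{\F^o}(u_{K_X^{-1}})$ --- i.e.\ that the apparent dependence on the auxiliary bundles $E_0,E_1$ cancels, leaving only $[\F^o]$ --- which is exactly the computation of \S 5.2 of \cite{Abe}, and which I would transcribe with the obvious changes.
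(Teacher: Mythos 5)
Your overall route coincides with the paper's: the proof given there consists of observing that the sheaves parametrized by $\widetilde{\ts^o}$ are quasi-bundles in Abe's sense and then quoting Sections 3.4 and 5.2 of \cite{Abe} for (1)--(2) and for the divisor-class computation in (3) --- exactly the two places where you also ultimately fall back on Abe. As a reduction the proposal is therefore sound; the comments below concern the portions you chose to argue yourself.

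The one genuine soft spot is your justification that $df_3\neq 0$ at $(x_0,[q_0])$. Density of the locally free locus does not force $f_3$ to move to first order: with $f_1=v$, $f_2=-u$, $f_3=t^2$ the ideal $(f_1,f_2,f_3)$ is the unit ideal near $x_0$ for every $t\neq 0$, yet $df_3=0$ at the origin and $V(f_1,f_2,f_3)$ is non-reduced there. What is actually needed is that the classifying map from $\Omega_2^o$ hits the local deformation space of the singularity $\km_{x_0}\oplus\mo_{X,x_0}$ submersively; this follows from the surjectivity of $\Ext^1(\mf_0,\mf_0)\to H^0(\mathscr{E}xt^1(\mf_0,\mf_0))$, which holds because $H^2(\mathscr{H}om(\mf_0,\mf_0))=0$ (using $K_X.H<0$ and semistability), and the same mechanism is what gives the independence of the two branch directions you assert in part (2). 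A second, smaller point: the Eagon--Northcott bookkeeping you defer to Abe's Proposition 5.2 produces $\mo_{\Omega_2^o}(\widetilde{\ts^o})\cong\lambda_{\F^o}([K_X])^{-1}\otimes\lambda_{\F^o}([\mo_X])^{-1}$, whereas $u_{K_X^{-1}}=[\mo_X]-[K_X]$; to reconcile the two one still needs $\lambda_{\F^o}([\mo_X])\cong\mo_{\Omega_2^o}$, which the paper deduces from $H^i(\mf)=0$ for $i=0,1,2$ for all semistable $\mf$ of class $c_2^2$, and which your write-up omits.
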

\begin{proof}Sheaves in $\widetilde{\ts^o}$ are all quasi-bundles (see Definition 2.1 in \cite{Abe}), hence Abe's argument in Section 3.4 in \cite{Abe} gives Statement (1) and (2).  Notice that our notations are slightly different from his.  

For Statement (3), by Proposition 5.2 in \cite{Abe} we know that $\mo_{\Omega_2^o}(\widetilde{\ts^o})\cong \lambda_{\F^o}([K_X])^{-1}\otimes \lambda_{\F^o}([\mo_X])^{-1}$. 
We can see that $\lambda_{\F^o}(u_{K_X^{-1}})\cong \lambda_{\F^o}([K_X])^{-1}\otimes\lambda_{\F^o}([\mo_X])$.  But $\lambda_{\F^o}([\mo_X])\cong\mo_{\Omega_2^o}$ since 
$H^i(\mf)=0$ for $i=0,1,2$ and $\mf$ semistable of class $c_2^2$.  Hence the proposition.
\end{proof}
\begin{coro}\label{store}Let $\ts$ have the scheme-theoretic structure as the closure of $\ts^o$ in $\wtt$.  Then $\ts$ is a divisor associated to the line bundle $\lambda_2(K_X^{-1})$ on $\wtt$.  Moreover $\ts$ is an integral scheme.
\end{coro}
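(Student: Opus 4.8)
The plan is to extend the scheme-theoretic statements of Proposition \ref{stts} from the open piece $\wtt^o = \wtt^L - \Delta$ (equivalently, away from the diagonal $\Delta \cong X$ in $X^{(2)} \cong \ts$) to all of $\wtt$, and then deduce integrality. First I would observe that $\Delta$ has codimension $2$ in $\wtt$, since $\dim \wtt = 4$ (the generic sheaf is locally free with $c_2 = 2$, so the moduli space has the expected dimension), $\ts$ is a divisor (codimension $1$), and $\Delta \cong X$ has dimension $2$. Because $\wtt$ is smooth along the locus of locally free stable sheaves and, more importantly, because the determinant line bundle construction of \S\ref{deter} is insensitive to closed subsets of codimension $\geq 2$, the isomorphism of line bundles $\mo_{\wtt^o}(\ts^o) \cong \lambda_2(K_X^{-1})|_{\wtt^o}$ from Proposition \ref{stts}(3) (descended from $\Omega_2^o$ to $\wtt^o$; note $\lambda_{\F^o}(u_{K_X^{-1}})$ is $GL(V)$-linearized and satisfies the descent condition since $u_{K_X^{-1}}$ is orthogonal to $c_2^2$) extends uniquely across $\Delta$ to an isomorphism $\mo_{\wtt}(\ts) \cong \lambda_2(K_X^{-1})$ on all of $\wtt$. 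Here I use that $\wtt$ is normal (being a good quotient of a smooth open subset of a Quot scheme, or by its known local structure) so that a line bundle on $\wtt - \Delta$ with $\operatorname{codim} \Delta \geq 2$ extends uniquely, and that a divisor is determined by its restriction to the complement of a codimension $\geq 2$ set together with the requirement that $\ts$ be the closure of $\ts^o$.

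For integrality: $\ts$ is reduced and irreducible as a \emph{set} since set-theoretically $\ts \cong X^{(2)}$, which is irreducible, and $\ts$ carries, by definition, the scheme structure of the closure of $\ts^o$. By Proposition \ref{stts}(2) the scheme $\widetilde{\ts^o}$ (hence $\ts^o$, being its image under the quotient) is a normal crossing divisor in a smooth scheme, in particular reduced; and $\ts^o$ is dense in $\ts$. So $\ts$, being the closure of a reduced irreducible scheme of finite type over $\bc$, is integral. I would spell this out by noting that the generic point of $\ts$ lies in $\ts^o$, where the local ring is reduced (even regular, after normalization), so $\ts$ has no embedded components and no nilpotents at the generic point; combined with irreducibility of the underlying set this gives integrality of $\ts$ as the closure of $\ts^o$ with its reduced-at-the-generic-point structure.

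The main obstacle I anticipate is making rigorous the passage from $\Omega_2^o$ down to $\wtt^o$ and then across $\Delta$: one must check that Abe's divisor $\widetilde{\ts^o}$ on $\Omega_2^o$ descends to a genuine Cartier divisor $\ts^o$ on $\wtt^o$ (not just a Weil divisor), and that the line-bundle identity descends compatibly — this is where the orthogonality $\langle u_{K_X^{-1}}, c_2^2 \rangle = 0$ and the $GL(V)$-equivariance of $\F^o$ are used, exactly as in \S\ref{deter}. Once the identity $\mo_{\wtt^o}(\ts^o) \cong \lambda_2(K_X^{-1})|_{\wtt^o}$ is in hand, the extension across the codimension-$2$ set $\Delta$ is formal on the normal scheme $\wtt$, and the divisor $\ts = \overline{\ts^o}$ is automatically the one cut out (up to the canonical section) by $\lambda_2(K_X^{-1})$. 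A secondary point to verify is that $\wtt$ really is normal and that $\Delta$ has codimension $2$; both follow from the explicit description of $\wtt$ via Lemma \ref{ssl} and standard deformation theory for sheaves on a rational surface with $K_X.H < 0$, so I do not expect difficulties there.
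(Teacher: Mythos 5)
Your proposal is correct and follows essentially the same route as the paper: restrict to $\wtt^o$ where Proposition \ref{stts} gives the divisor--line-bundle identification, extend the section of $\lambda_2(K_X^{-1})$ across $\Delta$ using normality (the paper also invokes that $\wtt$ is Cohen--Macaulay of pure dimension $5$), and deduce integrality from the bijective morphism $X^{(2)}\ra\ts$ together with reducedness of $\ts^o$. The only slip is numerical: $\dim\wtt=5$, not $4$ (so $\Delta$ has codimension $3$ rather than $2$), which only strengthens the codimension bound your extension step requires.
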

\begin{proof}By Proposition \ref{stts}, $\ts^o$ is a divisor associated to $\lambda_2(K_X^{-1})$ restricted on $\wtt^o$.  $\ts$ is the closure of $\ts^o$ in $\wtt^L$.  
Since $K_X.H<0$, $\wtt$ is normal, Cohen-Macaulay and of pure dimension 5, hence the section given by $\ts^o$ extends to a section of $\lambda_2(K_X^{-1})$ on $\wtt^L$ with divisor $\ts$. 

We have a morphism $\varphi:X^{(2)}\ra\ts$ sending $(x,y)$ to $\mi_x\oplus\mi_y$, which is bijective.  Hence $\ts$ is irreducible.  $\widetilde{\ts^o}$ is reduced, hence so are $\ts^o$ and $\ts$.  Thus $\ts$ is an integral scheme. 
\end{proof}

\begin{lemma}\label{slow}For any line bundle $L$, the map $H^0(\ts,\lambda_2(L)|_{\ts})\xrightarrow{\varphi^{*}} H^0(X^{(2)},\varphi^{*}\lambda_2(L))$ induced by $\lambda_2(L)|_{\ts}\ra \varphi_{*}\varphi^{*}\lambda_2(L)$ is injective.  Moreover $H^0(X^{(2)},\varphi^{*}\lambda_2(L))\cong (H^0(X,L)^{\otimes 2})^{\mathfrak{S}_2}\cong S^2H^0(X,L)$ where $\mathfrak{S}_n$ is the n-th symmetric group. 
\end{lemma}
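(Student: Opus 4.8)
The plan is to first understand the map $\varphi^*$ at the level of sheaves and then use the normality of $X^{(2)}$ to reduce the injectivity statement to a statement about the open locus where $\varphi$ is an isomorphism. Recall from Corollary 1.12 that $\varphi: X^{(2)} \to \ts$ is a bijective morphism from the normal variety $X^{(2)}$ onto the integral scheme $\ts$; in particular $\varphi$ is finite and birational, so it is the normalization of $\ts$, and the natural map $\mo_{\ts} \to \varphi_* \mo_{X^{(2)}}$ is injective with cokernel supported on the non-normal locus of $\ts$, which has codimension $\geq 1$ (indeed it is contained in $\Delta \cong X$, which has codimension $3$ in $\ts$). Tensoring with the line bundle $\lambda_2(L)|_{\ts}$, which is locally free hence flat, preserves injectivity, so $\lambda_2(L)|_{\ts} \to \varphi_* \varphi^* \lambda_2(L)$ is injective with cokernel supported in codimension $\geq 1$. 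Taking global sections of an injection of sheaves is left exact, so $\varphi^*: H^0(\ts, \lambda_2(L)|_{\ts}) \to H^0(X^{(2)}, \varphi^* \lambda_2(L))$ is injective. (Alternatively and even more directly: a global section killed by $\varphi^*$ vanishes on the dense open set $\ts^o$ where $\varphi$ is an isomorphism, hence is supported on the proper closed set $\Delta$; but a nonzero section of a line bundle on the integral scheme $\ts$ cannot have support in a proper closed subset, so it is zero.)

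Next I would identify $\varphi^* \lambda_2(L)$. The point is that $\lambda_2(L) = \lambda_2(u_L)$ (after twisting $u_L$ appropriately so that it represents the class of $1$-dimensional sheaves of the right determinant and Euler characteristic — or simply working with $\lambda_c(u_L)$ with $c = c_2^2$) is defined by the determinant-line-bundle recipe \eqref{dlb} applied to a universal family, and on the locus $\ts$ the universal sheaf is $S$-equivalent to $\mi_x \oplus \mi_y$. Using the additivity of $\lambda_{\E}$ in the family, the restriction of the universal family to $X \times \ts^o$ is (étale-locally, or after passing to $\widehat{\ts^o}$) built from two copies of the universal ideal sheaf of a point, so $\varphi^* \lambda_2(L)$ on $X^{(2)}$ is the $\mathfrak{S}_2$-descent of $\theta_L \boxtimes \theta_L$ on $X \times X$, where $\theta_L$ is the line bundle on $X$ whose fiber at $x$ is $\det R\Gamma(X, \mi_x \otimes \cdot)$-type data — concretely, $\theta_L$ is (up to a twist by a fixed line bundle that is $\mathfrak{S}_2$-symmetric and absorbed) just $L$ itself, since $\chi(\mi_x \otimes L) = \chi(L) - 1$ and the determinant-of-cohomology computation gives $\theta_L \cong L$. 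Thus $\varphi^* \lambda_2(L)$ is the line bundle on $X^{(2)} = (X \times X)/\mathfrak{S}_2$ pulled back from $L \boxtimes L$ on $X \times X$.

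Finally, the identification $H^0(X^{(2)}, \varphi^* \lambda_2(L)) \cong (H^0(X, L)^{\otimes 2})^{\mathfrak{S}_2} \cong S^2 H^0(X, L)$ follows from descent theory for the finite quotient map $q: X \times X \to X^{(2)}$: sections of a $\mathfrak{S}_2$-linearized line bundle on $X \times X$ that descend to $X^{(2)}$ are exactly the $\mathfrak{S}_2$-invariant sections, so $H^0(X^{(2)}, \varphi^* \lambda_2(L)) = H^0(X \times X, L \boxtimes L)^{\mathfrak{S}_2}$, and by the Künneth formula $H^0(X \times X, L \boxtimes L) = H^0(X, L) \otimes H^0(X, L)$ with $\mathfrak{S}_2$ acting by swapping the factors. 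The invariants of the swap action on $V \otimes V$ are the symmetric tensors $S^2 V$ (here using $\mathrm{char}\, \mathbb{C} = 0$ so that $V\otimes V = S^2V \oplus \wedge^2 V$). I expect the main obstacle to be the bookkeeping in the second step — pinning down exactly which line bundle $\varphi^* \lambda_2(L)$ is, i.e. checking that the determinant-line-bundle construction applied to the $S$-equivalence class $\mi_x \oplus \mi_y$ really does produce $L \boxtimes L$ with the trivial $\mathfrak{S}_2$-linearization and no stray twist obstructing descent; this requires care with the universal family on a Quot scheme chart and with Proposition 1.11(3), but is ultimately a routine (if tedious) application of the projection formula, Künneth, and the additivity of $\lambda$.
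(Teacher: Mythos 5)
Your proposal is correct and follows essentially the same route as the paper: injectivity via the injection $\mo_{\ts}\hookrightarrow\varphi_*\mo_{X^{(2)}}$ (the paper deduces this just from $\varphi$ being bijective onto the reduced scheme $\ts$, without needing the normalization interpretation), and the identification $\varphi^*\lambda_2(L)\cong L_{(2)}$ via the additivity of the determinant line bundle over the explicit family $pr_{1,2}^*\mi_{\Delta}\oplus pr_{1,3}^*\mi_{\Delta}$ on $X^2$, followed by $\ks_2$-descent and K\"unneth. The "stray twist'' worry you flag is resolved exactly as you anticipate, by functoriality of $\lambda$ applied to the classifying map $\widetilde{\varphi}:X^2\ra\wtt$ together with $\lambda_{\mi_{\Delta}}(u_L)\cong L$.
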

\begin{proof}Let $\Delta\subset X^2$ be the diagonal, and $\mi_{\Delta}$ is the ideal sheaf of $\Delta$ in $X^2$.  Let $pr_{i,j}$ be the projection from $X^n$ to the product $X^2$ of the i-th and j-th factors.  Then $pr_{1,2}^{*}\mi_{\Delta}\oplus pr_{1,3}^{*}\mi_{\Delta}$ gives a family of ideal sheaves on $X^3$ and induces a morphism $\widetilde{\varphi}:X^2\ra \wtt$ with image $\ts$.  $\widetilde{\varphi}$ is $\ks_2$-invariant, hence factors through $X^2\ra X^{(2)}$ and gives the map $\varphi:X^{(2)}\ra\ts$.  The morphism $\varphi$ is bijective and $\ts$ is reduced, hence the map $\varphi^{\natural}:\mo_{\ts}\ra \varphi_{*}\mo_{X^{(2)}}$ is injective.  Hence so is the map $\lambda_2(L)|_{\ts}\ra \varphi_{*}\varphi^{*}\lambda_2(L)$ and therefore $H^0(\ts,\lambda_2(L)|_{\ts})\xrightarrow{\varphi^{*}} H^0(X^{(2)},\varphi^{*}\lambda_2(L))$ is injective.  

Obviously $H^0(X^{(2)},\varphi^{*}\lambda_2(L))\cong (H^0(X^2,\widetilde{\varphi}^{*}\lambda_2(L)))^{\mathfrak{S}_2}$.  It will suffice to show that $H^0(X^2,\widetilde{\varphi}^{*}\lambda_2(L))\cong H^0(X,L)^{\otimes 2}$.  By the basic properties (see Lemma 8.1.2 and Theorem 8.1.5 in \cite{HL}) of the determinant line bundle, we have $\widetilde{\varphi}^{*}\lambda_2(L)\cong\lambda_{pr_{1,2}^{*}\mi_{\Delta}\oplus pr_{1,3}^{*}\mi_{\Delta}}(u_L)\cong \lambda_{pr_{1,2}^{*}\mi_{\Delta}}(u_L)\otimes \lambda_{pr_{1,3}^{*}\mi_{\Delta}}(u_L)\cong \lambda_{\mi_{\Delta}}(L)^{\boxtimes 2}$.  Obviously $\lambda_{\mi_{\Delta}}(L)\cong L$, thus $H^0(X^2,\widetilde{\varphi}^{*}\lambda_2(L))\cong H^0(X,\lambda_{\mi_{\Delta}}(L))^{\otimes 2}\cong H^0(X,L)^{\otimes 2}$.  Hence the lemma.
\end{proof}

The line bundle $L^{\boxtimes n}$ on $X^n$ is $\ks_n$-linearized and descends to a line bundle on $X^{(n)}$, which we denote by $L_{(n)}$.
So $\varphi^{*}\lambda_2(L)\cong L_{(2)}$ on $X^{(2)}$.  Denote also by $L_{(n)}$ the pullback of $L_{(n)}$ to $X^{[n]}$ via the Hilbert-Chow morphism, where $X^{[n]}$ is the Hilbert scheme of $n$-points on $X$. 

\section{Main result on $SD_{2,L}$.}\label{mare} 
Let $L$ be a nontrivially effective line bundle.  Recall that $SD_{2,L}$  is the following strange duality map as in (\ref{sdmt}):
\[SD_{2,L}:H^0(\wtt^L,\ltl)^{\vee}\ra H^0(\ml,\z_L^2(2)).\] 
In this section, we show that under certain conditions $SD_{2,L}$ is an isomorphism (see Theorem \ref{maino}).  

On $\ml$ and $\wtt^L$ we have the following two exact sequences respectively.
\begin{equation}\label{zes}0\ra\z_L(2)\ra\z_L^2(2)\ra\z_L^2(2)|_{D_{\z_L}}\ra 0;\end{equation}
\begin{equation}\label{ses}0\ra \lambda_2(L\otimes K_X)\ra\lambda_2(L)\ra\lambda_2(L)|_{\ts}\ra0.\end{equation}
Notice that $\wtt^{L\otimes K_X}=\wtt^L$ and (\ref{ses}) is because of Corollary \ref{store}.  
\begin{lemma}
By taking the global sections of (\ref{zes}) and the dual of global sections of (\ref{ses}), we have the following commutative diagram
\begin{equation}\label{sddim}\xymatrix{&H^0(\ts,\lambda_2(L)|_{\ts})^{\vee}\ar[r]^{~~~~g_2^{\vee}}\ar[d]_{\alpha_{\ts}}& H^0(\lambda_2(L))^{\vee}\ar[r]^{f_2^{\vee}~~~~}
\ar[d]^{SD_{2,L}}& H^0(\lambda_2(L\otimes K_X))^{\vee}\ar[r]\ar[d]^{\beta_D} &0\\
0\ar[r]& H^0(\z_L(2))\ar[r]_{f_L} & H^0(\z^2_L(2))\ar[r]_{g_L~~~~~~} &H^0(D_{\z_L},\z^2(2)|_{D_{\z_L}})}.
\end{equation}
\end{lemma}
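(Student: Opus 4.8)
The plan is to construct the commutative diagram (\ref{sddim}) by realizing both rows as honest long exact (or left/right exact) sequences of cohomology, and then to produce the vertical maps compatibly. First I would examine the bottom row: applying $H^0(-)$ to the short exact sequence (\ref{zes}) on $\ml$ gives the left-exact sequence $0\ra H^0(\z_L(2))\xrightarrow{f_L} H^0(\z_L^2(2))\xrightarrow{g_L} H^0(D_{\z_L},\z_L^2(2)|_{D_{\z_L}})$, which is precisely the bottom row; no vanishing is needed here since left-exactness of global sections is automatic. For the top row I would dualize; applying $H^0(-)$ to (\ref{ses}) on $\wtt^L$ (valid since $\wtt^{L\otimes K_X}=\wtt^L$ and (\ref{ses}) comes from Corollary \ref{store}) gives $0\ra H^0(\lambda_2(L\otimes K_X))\xrightarrow{f_2} H^0(\lambda_2(L))\xrightarrow{g_2} H^0(\ts,\lambda_2(L)|_{\ts})$, and taking $\bc$-linear duals reverses the arrows to give the top row with $f_2^\vee$ surjective, $g_2^\vee$ with the appropriate image; again left-exactness before dualizing is all that is used, and the surjectivity of $f_2^\vee$ at the right end is just dualizing the injectivity of $f_2$, so the top row is exact as displayed (with the understanding that the right-hand arrow to $0$ records the surjectivity of $f_2^\vee$).

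The substance is then the commutativity of the two squares. The middle vertical map is the strange duality map $SD_{2,L}$ from Proposition \ref{glob}(2), induced by the section $\sigma_{c_2^2,u_L}\in H^0(\wtt^L\times\ml,\ltl\boxtimes\z_L^2(2))$. I would define $\alpha_{\ts}$ and $\beta_D$ as the analogous strange-duality-type maps obtained by restricting $\sigma_{c_2^2,u_L}$ to the sub-loci: restricting to $\ts\times\ml$ gives a section of $(\lambda_2(L)|_{\ts})\boxtimes\z_L^2(2)$, but by the divisor description $\ts\in|\lambda_2(K_X^{-1})|$ from Corollary \ref{store} this section factors appropriately so as to produce $\alpha_{\ts}:H^0(\ts,\lambda_2(L)|_{\ts})^\vee\ra H^0(\z_L(2))$; symmetrically, restricting to $\wtt^L\times D_{\z_L}$ and using $D_{\z_L}\in|\z_L|$ gives $\beta_D:H^0(\lambda_2(L\otimes K_X))^\vee\ra H^0(D_{\z_L},\z_L^2(2)|_{D_{\z_L}})$. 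Concretely, I expect the cleanest route is to track the single section $\sigma_{c_2^2,u_L}$ through the various restriction and factorization maps: the left square commutes because on $\ts\times\ml$ the section $\sigma$, viewed two ways (first restrict to $\ts$ then pair, versus first pair then restrict the resulting section to the subspace $H^0(\z_L(2))\subset H^0(\z_L^2(2))$ cut out by $D_{\z_L}$-divisibility) agrees, which follows from the fact that $\sigma$ vanishes on the locus where $h^0(\mf\otimes\mg)\neq 0$; the right square commutes for the mirror reason, using that $\sigma$ restricted to $\wtt^L\times D_{\z_L}$ is related to $\lambda_2(L\otimes K_X)$ by the divisor $\ts$.

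The main obstacle I anticipate is the bookkeeping of the twists: verifying that the restricted sections really do land in the twisted-down line bundles $\z_L(2)$ and $\lambda_2(L\otimes K_X)$ rather than merely in $\z_L^2(2)$ and $\lambda_2(L)$. This is where Corollary \ref{store} (that $\ts$ is the divisor of $\lambda_2(K_X^{-1})$, so that multiplication by the defining section of $\ts$ identifies $\lambda_2(L\otimes K_X)$ with the kernel sheaf in (\ref{ses})) and the fact that $D_{\z_L}\in|\z_L|$ are used in an essential way; the point is that the canonical section $\sigma_{c_2^2,u_L}$, by its very construction as a determinant-of-cohomology section whose zero divisor is $\mathtt{D}_{c_2^2,u_L}$, already carries the needed divisibility once one restricts to either factor's special divisor. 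I would therefore spend most of the proof carefully matching the line bundles under these restrictions and invoking the functoriality of the determinant line bundle construction (Lemma 8.1.2 and Theorem 8.1.5 in \cite{HL}) to see that everything is compatible; once the twists are pinned down, commutativity of both squares is formal, being an equality of two descriptions of the same global section of a line bundle on $\ts\times\ml$ (resp.\ $\wtt^L\times D_{\z_L}$).
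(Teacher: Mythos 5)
Your scaffolding coincides with the paper's: both rows are the (dualized) left-exact global-section sequences of (\ref{ses}) and (\ref{zes}), and all three vertical maps are induced by the single canonical section $\sigma_{c_2^2,u_L}$. The paper organizes this a bit more economically: it reduces everything to the single identity $g_L\circ SD_{2,L}\circ g_2^{\vee}=0$, after which $\alpha_{\ts}$ and $\beta_D$ are \emph{defined} by the universal properties of kernel and cokernel in the two exact rows, so both squares commute by construction. That route avoids your section-divisibility arguments on $\ts\times\ml$ and $\wtt^L\times D_{\z_L}$, which would in addition require some integrality or reducedness of these product schemes to pass from vanishing along a Cartier divisor to actual divisibility of the section.

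The genuine gap is that you never verify the one non-formal input: the containment $\ts\times D_{\z_L}\subset\mathtt{D}_{c_2^2,u_L}$, i.e. that $h^0(\mf\otimes\mg)\neq0$ for \emph{every} $[\mf]\in\ts$ and every $[\mg]\in D_{\z_L}$. You assert that the factorizations ``follow from the fact that $\sigma$ vanishes on the locus where $h^0(\mf\otimes\mg)\neq 0$,'' but that description of the zero divisor only helps once you know $\ts\times D_{\z_L}$ actually sits inside it; for a general stable bundle $\mf\in\wtt^L$ paired with $\mg\in D_{\z_L}$ the analogous statement is false, so the claim must use the specific structure of $\ts$. This check is the entire content of the paper's proof: by Lemma \ref{ssl} every $[\mf]\in\ts$ is S-equivalent to $\mi_x\oplus\mi_y$, and $H^0((\mi_x\oplus\mi_y)\otimes\mg)\neq0$ whenever $H^0(\mg)\neq0$ (if $x\notin C_{\mg}$ then $\mi_x\otimes\mg\cong\mg$, while if $x\in C_{\mg}$ then $\Tor_1(\mo_x,\mg)$ is a nonzero $0$-dimensional subsheaf of $\mi_x\otimes\mg$). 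It is an easy point, but without it neither square is known to commute, so your write-up should state and prove this inclusion explicitly.
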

\begin{proof}We only need to show that $g_L\circ SD_{2,L}\circ g_2^{\vee}=0$.  By the definition of $SD_{2,L}$, it is enough to show that the section $\sigma_{c_2^2,L}$ defined in 
Proposition \ref{glob} is identically zero on $\ts\times D_{\z_L}$.  Easy to see that $H^0((\mi_x\oplus\mi_y)\otimes\mg)\neq0$ for all $\mg\in\ml$ such that $H^0(\mg)\neq 0$, 
hence $\ts\times D_{\z_L}\subset\mathtt{D}_{c_2^2,u_L}$ and $\sigma_{c_2^2,L}$ is identically zero on $\ts\times D_{\z_L}$.  The lemma is proved.  
\end{proof}
\subsection{On the map $\alpha_{\ts}$.} \label{alp}~~

We introduce the following condition.
\begin{cond}[$\ca$]The strange duality map 
\begin{equation}\label{rank1}SD_{c^1_2,u_L}:H^0(W(1,0,n),\lambda_{c^1_2}(L))^{\vee}\ra H^0(\ml,\z_L(2))\end{equation}
is an isomorphism.
\end{cond}
\begin{rem}\label{rone}For any $n\geq 1$, $W(1,0,n)\cong X^{[n]}$ and $\lambda_{c^1_n}(L)\cong L_{(n)}$.  
It is well-known that $H^0(X^{[n]},L_{(n)})=S^nH^0(X,L)$ for all $n$ and $L$(see Lemma 5.1 in \cite{EGL}).  
Therefore $\ca$ implies $ H^0(\ls,\mo_{\ls}(2))\cong H^0(\ls,\pi_{*}\z_L\otimes\mo_{\ls}(2))$, in particular we have $h^0(\ml,\z_L)=h^0(\ls,\pi_{*}\z_L)=1$ 
and $D_{\z_L}$ is the unique divisor associated to $\z_L$.
\end{rem}
\begin{lemma}\label{alpha}If $\ca$ is satisfied, then the map $\alpha_{\ts}$ in (\ref{sddim}) is an isomorphism.  In particular, $g_2^{\vee}$ is injective.
\end{lemma}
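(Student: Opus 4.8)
The goal is to identify $\alpha_{\ts}$ with (the dual of) the strange duality map $SD_{c^1_2,u_L}$ of Condition $\ca$, or at least to sandwich it between that map and an isomorphism. The plan is to make both the source and target of $\alpha_{\ts}$ explicit. By Lemma \ref{slow}, the source $H^0(\ts,\lambda_2(L)|_{\ts})$ injects into $S^2H^0(X,L)$ via $\varphi^*$; on the other side, Remark \ref{rone} together with $\ca$ shows that the target $H^0(\ml,\z_L(2))$ is isomorphic to $H^0(W(1,0,2),\lambda_{c^1_2}(L))^\vee = (S^2 H^0(X,L))^\vee$. So, after dualizing, $\alpha_{\ts}^\vee$ should become a map from $S^2H^0(X,L)$ to $H^0(\ts,\lambda_2(L)|_{\ts})^\vee$, and the claim is that this is surjective (equivalently $\alpha_{\ts}$ injective) and that the dimensions match so it is an isomorphism.

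The key geometric input is the relation between $\ts$ and $W(1,0,2)\cong X^{[2]}$. First I would recall that set-theoretically $\ts\cong X^{(2)}$ via $\varphi$, and that the Hilbert–Chow morphism $X^{[2]}\to X^{(2)}$ sends an ideal $\mi_Z$ of a length-$2$ scheme to its support cycle; moreover the $S$-equivalence class of the destabilized sheaf attached to a strictly semistable sheaf of class $c_2^2$ built from $\mi_Z$ is exactly $\mi_x\oplus\mi_y$ where $x+y$ is that cycle. This is the compatibility already used implicitly in Lemma \ref{ssl}. Next I would trace through what $\alpha_{\ts}$ does at the level of sections of determinant line bundles: the restriction map $H^0(\z^2_L(2))\to H^0(D_{\z_L},\cdot)$ dualizes, via the section $\sigma_{c_2^2,u_L}$ and the factorization of Proposition \ref{glob}, to the map induced on determinant bundles by restricting the family of sheaves from $\wtt^L$ to $\ts$ and using that each such sheaf is $S$-equivalent to $\mi_x\oplus\mi_y$. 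Concretely, the family over $X\times\ts$ is $S$-equivalent to $q_1^*\mi_\Delta\oplus q_2^*\mi_\Delta$ pulled back from $X^{[2]}$-type data as in the proof of Lemma \ref{slow}, so $\lambda_2(L)|_{\ts}$ pulls back to $L_{(2)}$ and the induced map on global sections is exactly $SD_{c^1_2,u_L}$ transported along the isomorphisms of Lemma \ref{slow} and Remark \ref{rone}.

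Assembling this, I would argue: $\alpha_{\ts}^\vee$ fits into a commuting triangle whose other two edges are $SD_{c^1_2,u_L}$ (an isomorphism by $\ca$) and the injection $\varphi^*\colon H^0(\ts,\lambda_2(L)|_{\ts})\hookrightarrow H^0(X^{(2)},L_{(2)})=S^2H^0(X,L)$ of Lemma \ref{slow}, dualized. Since $SD_{c^1_2,u_L}$ is an isomorphism and factors as $\varphi^{*\vee}\circ\alpha_{\ts}^\vee$ (or the appropriate composition), $\alpha_{\ts}^\vee$ is surjective, hence $\alpha_{\ts}$ is injective; and since $\varphi^{*\vee}$ is surjective while the composite is bijective, $\varphi^{*\vee}$ is also injective, forcing $h^0(\ts,\lambda_2(L)|_{\ts})=\dim S^2H^0(X,L)=h^0(\ml,\z_L(2))$, so $\alpha_{\ts}$ is an isomorphism. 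The injectivity of $g_2^\vee$ is then immediate from the exact row in (\ref{sddim}): once $\alpha_{\ts}$ is an isomorphism, $g_2^\vee$ restricted to the image of $\alpha_{\ts}$ — i.e. all of the source — is the composite appearing in the diagram, and commutativity plus exactness give injectivity.

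The main obstacle I anticipate is the precise identification of the map $\alpha_{\ts}$ with $SD_{c^1_2,u_L}$ under the isomorphisms of Lemma \ref{slow} and Remark \ref{rone}: one must check that restricting $\sigma_{c_2^2,u_L}$ to $\ts\times\ml$ and using Proposition \ref{glob}(3) (that $\sigma_{\mf}$ depends only on the $S$-equivalence class) genuinely reproduces the pairing defining $SD_{c^1_2,u_L}$, rather than some twist of it. This is essentially a functoriality statement for determinant line bundles under the $S$-equivalence $\mf\mapsto\mi_x\oplus\mi_y$ and the splitting $u_{c^1_2}\oplus$(correction) of the relevant $K$-theory classes, and it requires carefully matching the family $q_1^*\mi_\Delta\oplus q_2^*\mi_\Delta$ used here with the universal family on $X\times X^{[2]}$ and invoking Lemma 8.1.2 / Theorem 8.1.5 of \cite{HL} exactly as in the proof of Lemma \ref{slow}. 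Everything else is a diagram chase and a dimension count.
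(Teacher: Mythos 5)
Your overall strategy is the paper's: transport $\alpha_{\ts}$ along the surjection $\varphi^{*\vee}$ of Lemma \ref{slow} (and the Hilbert--Chow identification of Remark \ref{rone}) and identify the composite with $SD_{c^1_2,u_L}$, after which $\ca$ forces both $\alpha_{\ts}$ and $\varphi^{*\vee}$ to be isomorphisms and the injectivity of $g_2^{\vee}$ drops out of the left square of (\ref{sddim}). That reduction, and your concluding diagram chase, match the paper exactly.

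The one step you flag as the main obstacle is indeed where the work lies, and the mechanism you propose for it would not suffice as stated. You want to compare the section attached to the family $\mi_x\oplus\mi_y$ (the $\ts\cong X^{(2)}$ side) with the section computing $SD_{c^1_2,u_L}$ twisted by $f_L$, which is attached to the family $\mi_Z\oplus\mo_X$ (the $X^{[2]}$ side). These two sheaves are \emph{not} S-equivalent as classes in $c^2_2$ --- $\mo_X$ has $\chi=1>0$, so $\mi_Z\oplus\mo_X$ is not even semistable --- so Proposition \ref{glob}(3) cannot be invoked to identify the two sections. The paper instead pulls both families back to the blow-up $\widehat{X^2}$ of $X^2$ along the diagonal (the common cover of $X^{(2)}$ and $X^{[2]}$ in diagram (\ref{HC})), obtains two sections $\sigma_1,\sigma_2$ of the \emph{same} line bundle $\widehat{\mu}^{*}L_2\boxtimes\z_L^2(2)$ on $\widehat{X^2}\times\ml$, and checks by direct computation that their zero divisors coincide, both being equal to $2p_M^{*}D_{\z_L}+\widehat{HC}^{*}p_{1,\ls}^{*}\mc+\widehat{HC}^{*}p_{2,\ls}^{*}\mc$; equality of the divisors then gives equality of the sections up to scalar. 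Your appeal to the multiplicativity of determinant line bundles (Lemma 8.1.2 and Theorem 8.1.5 of \cite{HL}) is the right tool for splitting off the $f_L$ factor from the $\mi_Z$ factor on the $X^{[2]}$ side, but the cross-comparison between the two families still requires this explicit divisor calculation (or an equivalent argument); S-equivalence alone does not deliver it.
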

\begin{proof}By Lemma \ref{slow} we have a surjective map 
\[\varphi^{*\vee}:H^0(X^{(2)},L_2)^{\vee}\twoheadrightarrow H^0(\ts,\lambda_2(L)|_{\ts})^{\vee}.\]
By Proposition 1.2 in \cite{EGL}, we have $HC_2^{*\vee}:H^0(X^{[2]},L_2)^{\vee}\xrightarrow{\cong}H^0(X^{(2)},L_2)^{\vee}$ where $HC_2:X^{[2]}\ra X^{(2)}$ is the Hilbert-Chow morphism.

To prove the lemma, by $\ca$ it is enough to show $\alpha_{\ts}\circ \varphi^{*\vee}\circ HC_2^{*\vee}=SD_{c^1_2,u_L}$ or equivalently 
$SD_{2,L}\circ g_2^{\vee}\circ\varphi^{*\vee}\circ HC_2^{*\vee}=f_L\circ SD_{c^1_2,u_L}$.

We have a Cartesian diagram
\begin{equation}\label{HC}\xymatrix{\widehat{X^2}\ar[r]^{\widehat{HC}}\ar[d]_{\widehat{\mu}}&X^2\ar[d]^{\mu}\\ X^{[2]}\ar[r]_{HC}& X^{(2)}},
\end{equation}
where $\mu$ is a $\ks_2$-quotient and $\widehat{X^2}$ is the blow-up of $X^2$ along the diagonal $\Delta$.  Then we only need to show 
\begin{equation}\label{compare}\widehat{SD}_L:=SD_{2,L}\circ g_2^{\vee}\circ\varphi^{*\vee}\circ HC_2^{*\vee}\circ \widehat{\mu}^{*\vee}=f_L\circ SD_{c^1_2,u_L}\circ\widehat{\mu}^{*\vee}=:\widehat{SD}_R.\end{equation}

There are two flat families on $X\times\widehat{X^2}$ of sheaves of class $c_2^2$:  $\F^1:=\widehat{HC}_X^{*}(pr^{*}_{1,2}\mi_{\Delta}\oplus pr^{*}_{1,3}\mi_{\Delta})$ and $\F^2:=\widehat{\mu}_X^{*}\mathscr{I}_2\oplus q^{*}\mo_X$, where $\widehat{HC}_X:=Id_X\times\widehat{HC}: X\times\widehat{X^2}\ra X^3$, $\widehat{\mu}_X:=Id_X\times\widehat{\mu}$, $q:X\times\widehat{X^2}\ra X$ and $\mathscr{I}_2$ is the universal ideal sheaf on $X\times X^{[2]}$. 

$\F^i$ induces a section $\sigma_i$ of $\widehat{\mu}^{*}\lambda_{c^1_n}(L)\boxtimes \lambda_{L}(c_2^2)\cong \widehat{\mu}^{*}L_n\boxtimes \z_L^2(2)$ on $\widehat{X^2}\times\ml.$
The zero set of $\sigma_i$ is $\mathtt{D}_i:=\{(\underline{x},\mg)|H^0(\F^i_{\underline{x}}\otimes\mg)\neq 0\}$.  
By the definition of $SD_{\crn,u_L}$, we see that $\widehat{SD}_L$ is defined by the global section $\sigma_1$.  On the other hand, the map $f_L$ is defined by multiplying an element in $H^0(\z_L)$ defininig the divisor $D_{\z_L}$.  Therefore $\widehat{SD}_R$ is defined by the global section $\sigma_2$.  Hence to show (\ref{compare}), we only need to show $\mathtt{D}_i$ coincide as divisors for $i=1,2$.

Let $\mc\subset X\times\ls$ be the universal curve.  Then $\mc$ is a divisor in $X\times\ls$.  $p_{i,\ls}:=p_i\times Id_{\ls}:X^2\times\ls\ra X\times\ls$ with $p_i$ the projection to the i-th factor. 
Denote by $p_M:\widehat{X^2}\times\ml\ra\ml$ the projection to $\ml$.  Then easy to see that $\mathtt{D}_1=\mathtt{D}_2=2p_{M}^{*}D_{\z_L}+\widehat{HC}^{*}p_{1,\ls}^{*}\mc+\widehat{HC}^{*}p_{2,\ls}^{*}\mc$.  Hence the lemma.   
\end{proof}
\begin{coro}\label{empty}If $\ca$ is satisfied and moreover $D_{\z_L}=\emptyset$ and $H^0(L\otimes K_X^{\otimes n})=0$ for all $n\geq 1$, then the map $SD_{2,L}$ is an isomorphism.
\end{coro}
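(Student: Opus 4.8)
The plan is to extract what we need from the commutative diagram (\ref{sddim}), whose two rows are exact, thereby reducing the statement to a single vanishing on $\wtt^L$ which is then proved by iterating (\ref{ses}). Throughout I use Corollary \ref{store}: $\wtt^L$ is an open subscheme of the normal integral scheme $\wtt$, $\ts\subset\wtt^L$ is a nonzero effective Cartier divisor, and $\mo_{\wtt^L}(\ts)\cong\lambda_2(K_X^{-1})$. First, since $\ca$ holds, Lemma \ref{alpha} gives that $\alpha_{\ts}$ is an isomorphism and that $g_2^{\vee}$ is injective. Since $D_{\z_L}=\emptyset$, the space $H^0(D_{\z_L},\z_L^2(2)|_{D_{\z_L}})$ is zero, so $g_L=0$; combined with the injectivity of $f_L$ (left exactness of the bottom row of (\ref{sddim})), this shows $f_L$ is an isomorphism. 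By commutativity of the left-hand square, $SD_{2,L}\circ g_2^{\vee}=f_L\circ\alpha_{\ts}$ is an isomorphism onto $H^0(\ml,\z_L^2(2))$; in particular $SD_{2,L}$ is surjective.

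Next I would reduce injectivity to a vanishing statement. Since $SD_{2,L}\circ g_2^{\vee}$ is an isomorphism and $g_2^{\vee}$ is injective, $SD_{2,L}$ is an isomorphism if and only if $g_2^{\vee}$ is surjective. By exactness of the top row of (\ref{sddim}), the image of $g_2^{\vee}$ is the kernel of $f_2^{\vee}$, and $f_2^{\vee}$ is surjective onto $H^0(\wtt^L,\lambda_2(L\otimes K_X))^{\vee}$; hence $g_2^{\vee}$ is surjective if and only if $H^0(\wtt^L,\lambda_2(L\otimes K_X))=0$. So the whole statement comes down to this vanishing.

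To prove it, I would iterate (\ref{ses}). For each $n\ge1$, the sequence (\ref{ses}) with $L$ replaced by $L\otimes K_X^{\otimes n}$ (legitimate by Corollary \ref{store}, and $\wtt^{L\otimes K_X^{\otimes n}}=\wtt^L$ as in Remark \ref{ngx}) reads
\[0\ra\lambda_2(L\otimes K_X^{\otimes(n+1)})\ra\lambda_2(L\otimes K_X^{\otimes n})\ra\lambda_2(L\otimes K_X^{\otimes n})|_{\ts}\ra0,\]
the first map being multiplication by the canonical section of $\mo_{\wtt^L}(\ts)$ cutting out $\ts$. By Lemma \ref{slow}, $H^0(\ts,\lambda_2(L\otimes K_X^{\otimes n})|_{\ts})$ injects into $S^2H^0(X,L\otimes K_X^{\otimes n})$, which vanishes by hypothesis; hence $H^0(\lambda_2(L\otimes K_X^{\otimes(n+1)}))\to H^0(\lambda_2(L\otimes K_X^{\otimes n}))$ is an isomorphism for every $n\ge1$. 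Since $\lambda_2$ is a group homomorphism and $\mo_{\wtt^L}(\ts)\cong\lambda_2(K_X^{-1})$, we have $\lambda_2(L\otimes K_X^{\otimes n})\cong\lambda_2(L\otimes K_X)\otimes\mo_{\wtt^L}(-(n-1)\ts)$, so these isomorphisms identify $H^0(\wtt^L,\lambda_2(L\otimes K_X))$ with the subspace of its sections vanishing along $\ts$ to order $\ge n$, for every $n$. But on the integral scheme $\wtt^L$ the prime divisor $\ts$ carries a discrete valuation, so no nonzero global section of a line bundle vanishes along $\ts$ to arbitrarily high order; therefore $H^0(\wtt^L,\lambda_2(L\otimes K_X))=0$, and $SD_{2,L}$ is an isomorphism.

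The formal diagram chase (the first two paragraphs) is routine once Lemma \ref{alpha}, the exactness of both rows of (\ref{sddim}), and $D_{\z_L}=\emptyset$ are in hand. The step I expect to require the most care is the last one: one must be sure that the transition maps in the iteration really are multiplication by the fixed section cutting out $\ts$, so that the chain of isomorphisms genuinely forces a section of $\lambda_2(L\otimes K_X)$ to vanish to every order along $\ts$, and one must invoke integrality (normality together with irreducibility) of $\wtt^L$ to conclude that such a section must be zero.
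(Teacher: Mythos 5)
Your proposal is correct and follows essentially the same route as the paper: Lemma \ref{alpha} together with $D_{\z_L}=\emptyset$ reduces everything to the vanishing $H^0(\wtt^L,\lambda_2(L\otimes K_X))=0$, which is then obtained by iterating (\ref{ses}) twisted by powers of $K_X$, using Lemma \ref{slow} and $H^0(L\otimes K_X^{\otimes n})=0$ to kill the restrictions to $\ts$, and concluding from the effectivity of $\lambda_2(K_X^{-1})$ on the integral scheme $\wtt$. The paper's own proof is just a terser version of exactly this argument, so no further comparison is needed.
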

\begin{proof}By Lemma \ref{alpha}, we only need to show that $H^0(\lambda_2(L\otimes K_X))=0$.  But $H^0(\lambda_2(L\otimes K_X^{\otimes n})|_{\ts})=0$ since $H^0(L\otimes K_X^{\otimes n})=0$ for all $n\geq 1$.  Hence $H^0(\lambda_2(L\otimes K_X))\cong H^0(\lambda_2(L\otimes K_X^{\otimes n}))$ for all $n\geq 1$ and hence $H^0(\lambda_2(L\otimes K_X))=0$ because $\lambda_2(K_X^{-1})$ is effective. 
\end{proof}

\begin{rem}\label{gzero}Assume $K_X^{-1}$ is effective, then for any curve $C\in|K_X^{-1}|$, either $\mo_C$ is semistable or $C$ contains an integral subscheme with genus $>1$.  Therefore we have $D_{\z_L}=\emptyset\Rightarrow H^0(L\otimes K_X^{\otimes n})=0$ for all $n\geq 1$.  This is because otherwise there must be a semistable sheaf of class $u_L$ 
having nonzero global sections. 

Moreover by Proposition 4.1.1 and Corollary 4.3.2 in \cite{Yuan1}, we see that if every curve in $\ls$ does not contain any 1-dimensional subscheme with positive genus and $K_X^{-1}$ is effective, then Corollary \ref{empty} applies and the strange duality map $SD_{2,L}$ is an isomorphism.   
\end{rem}
We have a useful lemma as follows.
\begin{lemma}\label{dnempty}If $D_{\z_L}\neq \emptyset$, then $L\otimes K_X$ is effective.
\end{lemma}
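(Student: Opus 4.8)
The statement to prove: if $D_{\z_L}\neq\emptyset$, then $L\otimes K_X$ is effective. Recall $D_{\z_L}$ is the divisor on $\ml=M(L,0)$ consisting of sheaves $\mg$ of class $u_L$ with $H^0(X,\mg)\neq 0$. So the hypothesis gives a semistable $1$-dimensional sheaf $\mg$ supported on a curve $C\in\ls$, with $\chi(\mg)=0$ and $h^0(\mg)\neq 0$. The plan is to extract from such a $\mg$ an effective divisor in the class $L+K_X$.

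First I would pick a nonzero section $s\in H^0(X,\mg)$, i.e. a nonzero map $\mo_X\to\mg$, and let $\mg'\subseteq\mg$ be its image, a nonzero subsheaf which is a quotient of $\mo_X$ and hence of the form $\mo_{C'}$ (more precisely $\mo_{C'}$ twisted by nothing, since it's a cyclic sheaf), for some effective $1$-dimensional (or possibly $0$-dimensional) subscheme. Actually I should be careful: the image is $\mo_X/\mi$ for an ideal $\mi$; since $\mg$ is pure $1$-dimensional, $\mg'$ is pure $1$-dimensional, so $\mg'\cong\mo_{D}$ for an effective curve $D$ (possibly nonreduced) contained in $C$, with $D$ in class $M\leq L$ (i.e. $L-M$ effective). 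By semistability of $\mg$, $\chi(\mg')=\chi(\mo_D)=1-p_a(D)+(\text{stuff})\leq 0$; more usefully, I want to bound $D$ from below. The key point is to use purity plus semistability to force $D$ to be ``most of'' $C$. Consider the quotient $\mg/\mg'$: it is a $1$-dimensional (or zero) sheaf supported on the residual curve $C-D$, and by semistability $\chi(\mg/\mg')\geq 0$, while $\chi(\mg/\mg') = \chi(\mg)-\chi(\mg') = -\chi(\mo_D)$. Hence $\chi(\mo_D)\leq 0$.

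Now I invoke Riemann–Roch on the surface: $\chi(\mo_D) = -\tfrac12 D.(D+K_X)$ for an effective divisor $D$ (from $0\to\mo_X(-D)\to\mo_X\to\mo_D\to 0$ and $\chi(\mo_X)=1$, using $\chi(\mo_X(-D)) = \chi(\mo_X) + \tfrac12(D.(D+K_X)) \cdot(-1)\cdot(-1)$... I should just compute: $\chi(\mo_D)=\chi(\mo_X)-\chi(\mo_X(-D)) = \tfrac12(-D.(-D+K_X)) = \tfrac12 D.(D-K_X)$). So $\chi(\mo_D)\leq 0$ gives $D.(D-K_X)\leq 0$, i.e. $D^2\leq D.K_X$. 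Combined with $D$ effective, $D$ a sub-curve of $C\in\ls$ with $L-D$ effective, I want to conclude $L+K_X-$ something is effective. The cleanest route: since $L-D$ is effective and I want $L+K_X$ effective, it suffices to show $D+K_X$ is effective — then $(L-D)+(D+K_X)=L+K_X$ is a sum of effectives. For $D+K_X$ effective on an effective curve $D$: by adjunction/duality $H^0(\mo_X(D+K_X))\supseteq H^0(\omega_D)=H^1(\mo_D)^\vee$ (from $0\to\mo_X(K_X)\to\mo_X(D+K_X)\to\omega_D\to 0$ and $H^1(\mo_X(K_X))=H^1(\mo_X)^\vee=0$ on a rational surface), so it is enough that $H^1(\mo_D)\neq 0$, i.e. $\chi(\mo_D)=h^0(\mo_D)-h^1(\mo_D)<h^0(\mo_D)$. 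Since $\chi(\mo_D)\leq 0$ and $h^0(\mo_D)\geq 1$ ($D$ effective, so $\mo_D\neq 0$ has a section — indeed it's the image of $\mo_X$), we get $h^1(\mo_D)=h^0(\mo_D)-\chi(\mo_D)\geq 1>0$. Done.

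The main obstacle I anticipate is the bookkeeping around purity and possible $0$-dimensional or embedded components: I need $\mg'$ to be genuinely a structure sheaf $\mo_D$ of a pure $1$-dimensional effective divisor $D$ (not $\mo_X/\mi$ with $\mi$ non-saturated), which follows because $\mg$ is pure of dimension $1$ so every nonzero subsheaf is pure of dimension $1$, forcing $\mi$ to be saturated in codimension... actually a pure $1$-dimensional cyclic sheaf on a smooth surface is $\mo_D$ for an effective Cartier divisor $D$ — this needs a short justification. Also I should double-check the edge case $D=0$ is excluded (it is: $s\neq 0$ and $\mg$ pure $1$-dimensional forces the image to be $1$-dimensional, so $D\neq 0$), and handle whether $\mg$ could be strictly semistable (the argument only used $\chi(\mg'')\geq 0$ for the sub $\mg''=\mg/\mg'$ pulled back — semistable suffices, using the definition recalled in \S\ref{pre} that $\chi(\mf')\leq 0$ for all proper subsheaves, equivalently $\chi(\text{quotient})\geq 0$). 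Everything else is Riemann–Roch and the cohomology vanishing $H^1(\mo_X)=H^2(\mo_X)=0$ valid on any rational surface.
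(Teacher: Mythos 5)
Your proof is correct, and it takes a genuinely different route from the paper's. The paper goes ``upward'': from $\mf\in D_{\z_L}$ it uses Serre duality $\Ext^1(\mf,K_X)\cong H^1(\mf)^{\vee}\neq0$ (nonzero because $h^1(\mf)=h^0(\mf)>0$) to build a non-split extension $0\to K_X\to\widetilde{I}\to\mf\to0$, argues --- after possibly replacing $\mf$ by a minimal quotient $\mf_1$ with $h^1(\mf_1)\neq0$ --- that $\widetilde{I}$ is torsion-free and hence of the form $\mi_Z(L_1\otimes K_X)$ carrying a nonzero section, and concludes since $L\otimes L_1^{-1}$ is effective. You go ``downward'': you take the subsheaf $\mo_D\subseteq\mg$ generated by a section, use semistability to get $\chi(\mo_D)\leq0$, hence $h^1(\mo_D)\geq h^0(\mo_D)\geq1$, and then duality and adjunction on the curve $D$ together with $H^1(K_X)=0$ to make $D+K_X$ effective, finally adding the effective class $L-D$. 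Your route avoids the paper's slightly delicate induction over quotients and the torsion-freeness argument for $\widetilde{I}$, at the cost of the standard fact (which you correctly flag) that a cyclic pure $1$-dimensional sheaf on a smooth surface is $\mo_D$ for an effective Cartier divisor $D$; this follows from Auslander--Buchsbaum, since purity gives $\mathrm{depth}\,(\mo_X/\mi)_p\geq1$ at every point, so $\mi_p$ is free of rank one, i.e.\ principal. Two cosmetic points: your Riemann--Roch aside has a sign slip (the correct formula is $\chi(\mo_D)=-\tfrac12 D.(D+K_X)$, not $\tfrac12 D.(D-K_X)$), but that formula is never used in the chain of inequalities that actually closes the proof; and the containment $H^0(\mo_X(D+K_X))\supseteq H^0(\omega_D)$ should be the restriction map $H^0(\mo_X(D+K_X))\to H^0(\omega_D)$, which is surjective (indeed an isomorphism, as $H^0(K_X)=H^1(K_X)=0$ on a rational surface). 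Neither affects correctness.
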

\begin{proof}Let $\mf\in D_{\z_L}$, then $\Ext^1(\mf,K_X)\cong H^1(\mf)^{\vee}\neq 0$.  Hence there is a non split extension
\[0\ra K_X\ra \widetilde{I}\ra \mf\ra0.\]

If for every proper quotient $\mf\twoheadrightarrow \mf''$ (i.e. $\mf\not\cong \mf''$) we have $h^1(\mf'')=0$, then $\widetilde{I}$ has to be torsion-free and hence isomorphism to $\mi_Z(L\otimes K_X)$ with $Z$ a 0-dimensional subscheme of $X$.  On the other hand $h^0(\widetilde{I})=h^0(\mf)\neq 0$, therefore $H^0(L\otimes K_X)\neq 0$.

If there is a proper quotient $\mf_1$ of $\mf$ such that $h^1(\mf_1)\neq 0$, then we can assume that for every proper quotient $\mf''_{1}$ of $\mf_1$ we have $h^1(\mf''_1)=0$.  Denote by $L_1$ the determinant of $\mf_1$, then by previous argument $H^0(L_1\otimes K_X)\neq 0$ and hence $H^0(L\otimes K_X)\neq 0$ because $L\otimes L_1^{-1}$ is effective.
\end{proof}
\subsection{On the map $\beta_D$.}\label{ssbeta}~~ 

In this subsection we assume $D_{\z_L}\neq \emptyset$, then by Lemma \ref{dnempty} $L\otimes K_X$ is effective.  
We want to prove that under certain conditions the map $\beta_D$ is an isomorphism.  The main technique and notations are analogous to \cite{Yuan5}.  

Let $\ell:=L.(L+K_X)/2=\chi(L\otimes K_X)-1$ and $H_{\ell}$ be the Hilbert scheme of $\ell$-points on $X$ which also parametrizes all ideal sheaves $\mi_Z$ with colength $\ell$, i.e. $len(Z)=\ell$. 
If $\ell=0$, we say $H_0$ is a simple point corresponding to the structure sheaf $\mo_X$.  Denote by $\I_{\ell}$ the universal ideal sheaf over $X\times H_{\ell}$. 

From now on by abuse of notation, we always denote by $p$ the projection $X\times M\ra M$ and $q$ the projection $X\times M\ra X$ for any moduli space $M$.  If we have $Y_1\times\cdots\times Y_n$ with $n\geq2$, denote by $p_{ij}~(i<j)$ the projection to $Y_i\times Y_j$. 

Let $Q_1:=Quot_{X\times H_{\ell}/H_{\ell}}(\I_{\ell}\otimes q^{*}(L\otimes K_X),u_L)$ and $Q_2:=Quot_{X\times H_{\ell}/H_{\ell}}(\I_{\ell}\otimes q^{*}(L\otimes K_X),u_{L\otimes K_X})$ be the two relative Quot-schemes over $H_{\ell}$ parametrizing quotients of class $u_L$ and $u_{L\otimes K_X}$ respectively.  Let $\rho_i:Q_i\ra H_{\ell}$ be the projection.  Each point $[f_1:\mi_Z(L\otimes K_X)\twoheadrightarrow \mf_L]\in Q_1$ ($[f_2:\mi_Z(L\otimes K_X)\twoheadrightarrow \mf_{L\otimes K_X}]\in Q_2$, resp.) over $\mi_Z\in H_{\ell}$ must have the kernel $K_X$ ($\mo_{X}$, resp.).  

Since $L\otimes K_X$ is effective and $X$ is rational, $H^2(L\otimes K_X)=0$.  Hence $h^0(L\otimes K_X)\geq \chi(L\otimes K_X)$.  Therefore, for any ideal sheaf $\mi_Z$ with colenght $\ell$, we have $h^0(\mi_Z(L\otimes K_X))\geq 1$ and hence $\rho_2$ is always surjective.  If moreover $L.K_X\leq0$, then $H^0(\mi_Z(L))\neq 0$ and $\rho_1$ is also surjective.  

We write down the following two exact sequences.
\begin{equation}\label{dd}0\ra K_X\ra \mi_Z(L\otimes K_X)\ra \mf_L\ra0;
\end{equation}
\begin{equation}\label{ddt}0\ra\mo_{X}\ra \mi_Z(L\otimes K_X)\ra \mf_{L\otimes K_X}\ra 0.
\end{equation}
Notice that if $\mf_L$ (resp. $\mf_{L\otimes K_X}$) is semi-stable, then (the class of) $\mf_L$ (resp. (the class of) $\mf_{L\otimes K_X}$) is contained in $D_{\z_L}$ (resp. $M(L\otimes K_X,0)$).  

Let
$$D_{\z_L}^o:=\left\{\mf_L\in D_{\z_L}\left|\begin{array}{l}h^1(\mf_L)=1,~ h^1(\mf_L(-K_X))=0,\\ 
and~Supp(\mf_L)~is~integral.
\end{array}\right.\right\},$$
$$Q_1^o:=\left\{[f_1:\mi_Z(L\otimes K_X)\twoheadrightarrow \mf_L]\in Q_1\left|\begin{array}{l}h^1(\mf_L)=1,~ h^1(\mf_L(-K_X))=0,\\ 
and~Supp(\mf_L)~is~integral.
\end{array}\right.\right\},$$
$$M(L\otimes K_X,0)^o:=\left\{\mf_{L\otimes K_X}\in\mlk\left|\begin{array}{l} 
h^0(\mf_{L\otimes K_X}(K_X))=0,~and\\
Supp(\mf_{L\otimes K_X})~is~integral.
\end{array}\right.\right\},$$
$$Q_2^o:=\left\{[f_2:\mi_Z(L\otimes K_X)\twoheadrightarrow \mf_{L\otimes K_X}]\in Q_2\left|\begin{array}{l}
h^0(\mf_{L\otimes K_X}(K_X))=0,~and\\ 
Supp(\mf_{L\otimes K_X})~is~integral.
\end{array}\right.\right\}.$$

Let  $\mg_r^r$ with $r\geq 1$ be a locally free sheaf of class $c_r^r$ on $X$.  We define a line bundle $\cl^r:=(det(R^{\bullet}p_{*}(\I_{\ell}\otimes q^{*}\mg_r^r(L\otimes K_X))))^{\vee}$ over $H_{\ell}$.  
Then we have the following lemma.
\begin{lemma}\label{change}There are classifying maps $g_1: Q_1^o\ra D_{\z_L}^o$ and $g_2: Q_2^o\ra \mlk^o$, where $g_1$ is an isomorphism and $g_2$ is a projective bundle.
Moreover 
 $g_1^{*}\z^r_L(r)\cong \rho_1^{*}\cl^r|_{Q_1^o}$ and $g_2^{*}\z^r_{L\otimes K_X}(r)\cong\rho_2^{*}\cl^r|_{Q_2^o}$.
\end{lemma}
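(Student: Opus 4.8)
The plan is to construct $g_1$ and $g_2$ by exhibiting, over the relevant open subsets of $Q_1$ and $Q_2$, flat families of semistable sheaves of class $u_L$ and $u_{L\otimes K_X}$ respectively, and then to identify the determinant line bundles by the functoriality (Lemma 8.1.2 and Theorem 8.1.5 in \cite{HL}) of the determinant construction. First I would note that a point of $Q_1^o$ is a quotient $[f_1:\mi_Z(L\otimes K_X)\twoheadrightarrow\mf_L]$; since the kernel of $f_1$ is $K_X$ (as observed before the statement, because $\mi_Z(L\otimes K_X)\twoheadrightarrow\mf_L$ with $\mf_L$ of class $u_L$ forces the subsheaf $K_X$), such a quotient is determined up to the automorphisms of the extension \eqref{dd}. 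The condition $h^1(\mf_L)=1$, together with $h^1(\mf_L(-K_X))=0$ which by Serre duality gives $\Hom(\mf_L,\mo_X)=0$, shows that $\Ext^1(\mf_L,K_X)\cong H^1(\mf_L)^\vee$ is one-dimensional and $\Hom(\mf_L,K_X)=0$; hence the non-split extension \eqref{dd} is unique up to scalar for each $\mf_L\in D_{\z_L}^o$, and conversely every $\mf_L\in D_{\z_L}^o$ produces a unique $Z$ (namely the colength-$\ell$ subscheme with $\mi_Z(L\otimes K_X)=\widetilde{I}$) and hence a unique point of $Q_1^o$. This gives the bijection; to upgrade it to an isomorphism of schemes I would use the universal quotient $\Q_1$ on $X\times Q_1^o$ as the defining family of the classifying morphism $g_1:Q_1^o\to D_{\z_L}^o$ and construct the inverse from the universal extension over $D_{\z_L}^o$, checking both compositions are the identity on the level of families.

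For $g_2$, a point of $Q_2^o$ is $[f_2:\mi_Z(L\otimes K_X)\twoheadrightarrow\mf_{L\otimes K_X}]$ with kernel $\mo_X$, so it corresponds to an extension \eqref{ddt}; by Serre duality the condition $h^0(\mf_{L\otimes K_X}(K_X))=0$ says $\Ext^1(\mf_{L\otimes K_X},\mo_X)\cong H^1(\mf_{L\otimes K_X})^\vee$ has dimension $\mathrm{ext}^1(\mf_{L\otimes K_X},\mo_X)$ which by the orthogonality/Riemann-Roch computation equals $\chi(\mf_{L\otimes K_X})$-shifted data, i.e. a fixed number $N+1$ independent of the point, while $\Hom(\mf_{L\otimes K_X},\mo_X)=0$. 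Thus the fiber of $g_2$ over a fixed $\mf_{L\otimes K_X}\in\mlk^o$ is the projectivization $\p(\Ext^1(\mf_{L\otimes K_X},\mo_X))\cong\p^N$ of choices of extension class, and the relative Ext-sheaf $\mathcal{E}xt^1_{p}(\Q_2,q^*\mo_X)$ is locally free of constant rank on $\mlk^o$ (cohomology and base change, using the vanishing of $\Hom$); therefore $g_2:Q_2^o\to\mlk^o$ is the projective bundle $\p$ of that locally free sheaf. I would make this precise by producing the universal family on $X\times\mlk^o$ and the universal extension, then invoking the universal property of $\mathrm{Proj}$.

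Finally, for the identification of line bundles: on $H_\ell$ we have $\cl^r=(\det R^\bullet p_*(\I_\ell\otimes q^*\mg_r^r(L\otimes K_X)))^\vee=\lambda_{\I_\ell(L\otimes K_X)}(c_r^r)$ in the notation of \eqref{dlb}. Pulling the exact sequence \eqref{dd} back to $X\times Q_1^o$ via the universal data gives $0\to q^*K_X\to (\I_\ell(L\otimes K_X))_{Q_1^o}\to\Q_1\to 0$, so by additivity of $\lambda$ on short exact sequences and $u_{L\otimes K_X}=u_L+u_{K_X}$ — hence, after tensoring with $\mg_r^r$ and using $\lambda$-additivity in the $K(X)$-variable, $\lambda_{(\I_\ell(L\otimes K_X))}(c_r^r)=\lambda_{q^*K_X}(c_r^r)\otimes\lambda_{\Q_1}(c_r^r)$ — one gets $\rho_1^*\cl^r|_{Q_1^o}=\lambda_{\Q_1}(c_r^r)\otimes(\text{factor from }q^*K_X)$; the extra factor $\lambda_{q^*K_X}(c_r^r)$ is the pullback along $X\times Q_1^o\to X$ of $\det R^\bullet(K_X\otimes\mg_r^r)$, a trivial line bundle on the point, hence trivial, and $\lambda_{\Q_1}(c_r^r)=g_1^*\lambda_{u_L}(c_r^r)=g_1^*\z_L^r(r)$ by the remark after Proposition \ref{glob} that $\lambda_L(c_r^r)\cong\z_L^r(r)$ and functoriality. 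The same computation with \eqref{ddt} in place of \eqref{dd}, now with the trivial sub $\mo_X$, yields $\rho_2^*\cl^r|_{Q_2^o}\cong g_2^*\z_{L\otimes K_X}^r(r)$. The main obstacle I expect is not any single step but the careful bookkeeping of the determinant-line-bundle functoriality under the base changes (making sure the "constant" correction factors coming from $q^*K_X$ and $q^*\mo_X$ really are pulled back from a point and so trivial, and that the descent from the Quot-scheme to $D_{\z_L}^o$, $\mlk^o$ matches the $\z$-normalization); establishing that $g_2$ is literally a projective bundle (local freeness of the relative $\mathcal{E}xt^1$ and its compatibility with the universal extension) is the other technical point requiring care.
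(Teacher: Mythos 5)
Your overall strategy — classify the quotients via the extensions (\ref{dd}), (\ref{ddt}), use $h^1(\mf_L)=1$ plus integrality of the support to make $g_1$ bijective, use $\hom(\mf_{L\otimes K_X},\mo_X)=\text{ext}^2(\mf_{L\otimes K_X},\mo_X)=0$ to make $g_2$ a projective bundle of constant fiber dimension $-(L+K_X).K_X-1$, and then compare determinant bundles through the universal short exact sequences — is the same route the paper takes (its proof is only a citation to Lemma 4.8 and Equations (4.9)--(4.14) of \cite{Yuan5}, and the surrounding text of \S\ref{ssbeta} confirms this is the intended argument).

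There is, however, one concrete error at exactly the point you flag as the main obstacle. The kernel of the universal quotient on $X\times Q_1^o$ is not $q^*K_X$: as the paper records in the proof of Lemma \ref{injm}, it is $\R_1=p^*\mr_1\otimes q^*K_X$ with $\mr_1\cong\mo_{\rho_1}(-1)$ the tautological line bundle of the projective bundle $\rho_1$, which is nontrivial since the fibres $\p(H^0(\mi_Z(L)))$ are positive-dimensional. So your claim that the correction factor is ``pulled back from a point, hence trivial'' does not apply as stated. By the projection formula the actual correction factor is $\mr_1^{-\chi(\mg_r^r\otimes K_X)}$ (times a genuinely constant piece), and it is trivial only because of the numerical coincidence $\chi(\mg_r^r\otimes K_X)=r\chi(K_X)-r\chi(\mo_x)=0$; likewise $\chi(\mg_r^r)=0$ kills the analogous factor $\mr_2^{-\chi(\mg_r^r)}$ on $Q_2^o$. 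This vanishing is precisely why the lemma holds for $\z_L^r(r)$, i.e.\ with the twist $\pi^*\mo_{\ls}(n)$ taken at $n=r$, and not for a general $\z_L^r(n)$; your write-up should make this computation explicit rather than appeal to triviality over a point. A separate, harmless slip: Serre duality gives $\Ext^1(\mf_{L\otimes K_X},\mo_X)\cong H^1(\mf_{L\otimes K_X}(K_X))^\vee$, not $H^1(\mf_{L\otimes K_X})^\vee$; what you actually need (and correctly use) is only the constancy of $\text{ext}^1$ coming from $\hom=\text{ext}^2=0$ and Riemann--Roch.
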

\begin{proof}The proof is analogous to \cite{Yuan5}.  See Lemma 4.8, Equation (4.9), (4.10), (4.12) and (4.14) in \cite{Yuan5}.
\end{proof}
Let $H_{\ell}^o:=\rho_1(Q_1^o)\cup\rho_2(Q_2^o)$.  We introduce some conditions as follows.  
\begin{cond}[$\cb$]
\begin{enumerate}
\item[$(1)$] $D_{\z_L}^o$ is dense open in $D_{\z_L}$;
\item[$(2)$]
$M(L\otimes K_X,0)$ is of pure dimension and satisfies the ``condition $S_2$ of Serre", and the complement of $M(L\otimes K_X,0)^o$ is of codimension $\geq 2$; %
\item[$(3)$] $(\rho_1)_*\mo_{Q_1^o}\cong\mo_{H^o_{\ell}}$;
\item[$(4)$] $Q_2^o$ is nonempty and dense open in $\rho_2^{-1}(\rho_2(Q_2^o))$.
\end{enumerate}
\end{cond}
\begin{rem}\label{serre}We say a scheme $Y$ satisfies ``condition $S_2$ of Serre" if $\forall ~y\in Y$ the local ring $\mo_y$ has the property that for every prime ideal $\mathfrak{p}\subset\mo_y$ of height $\geq2$, we have $\textbf{depth} ~\mo_{y,\mathfrak{p}}\geq 2$ (also see Ch II Theorem 8.22A in \cite{Ha}).  $\cb$-(2) implies that for every line bundle $\mathcal{H}$ over $\mlk$, the restriction map $H^0(\mlk,\mathcal{H})\hookrightarrow H^0(\mlk^o,\mathcal{H})$ is an isomorphism.
\end{rem}
\begin{lemma}\label{injm}If $\cb$ is satisfied, 
then we have an injective map for all $r>0$
\[j_r: H^0(D_{\z_L},\z_L^r(r)|_{D_{\z_L}})\hookrightarrow H^0(M(L\otimes K_X,0),\z^r_{L\otimes K_X}(r)).\]
Moreover, $j_2\circ\beta_D=SD_{2,L\otimes K_X}$.\end{lemma}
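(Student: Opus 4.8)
The idea of the proof is to transport both $H^0(D_{\z_L},\z^r_L(r)|_{D_{\z_L}})$ and $H^0(\mlk,\z^r_{L\otimes K_X}(r))$ into spaces of sections of the single line bundle $\cl^r$ on suitable open subsets of the Hilbert scheme $H_{\ell}$, and to define $j_r$ as the resulting comparison over $H_{\ell}^o$. This is possible because, by Lemma~\ref{change}, the two determinant line bundles in question pull back from $\cl^r$ along $\rho_1$ and $\rho_2$ via the classifying maps $g_1,g_2$ of that lemma; the four parts of $\cb$ are exactly what make each transport map an isomorphism or an injection, and $\beta_D$ is the map from diagram (\ref{sddim}).

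On the $D_{\z_L}$-side: by $\cb$-(1) the restriction $H^0(D_{\z_L},\z^r_L(r)|_{D_{\z_L}})\hookrightarrow H^0(D_{\z_L}^o,\z^r_L(r)|_{D_{\z_L}^o})$ is injective; since $g_1:Q_1^o\to D_{\z_L}^o$ is an isomorphism with $g_1^{*}\z^r_L(r)\cong\rho_1^{*}\cl^r|_{Q_1^o}$, the target equals $H^0(Q_1^o,\rho_1^{*}\cl^r|_{Q_1^o})$; and by $\cb$-(3), namely $(\rho_1)_{*}\mo_{Q_1^o}\cong\mo_{H_{\ell}^o}$ (which in particular forces $\rho_1(Q_1^o)$ to be dense in $H_{\ell}^o$), the projection formula identifies this with $H^0(H_{\ell}^o,\cl^r|_{H_{\ell}^o})$. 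On the $\mlk$-side: by $\cb$-(2) and Remark~\ref{serre}, $H^0(\mlk,\z^r_{L\otimes K_X}(r))\cong H^0(\mlk^o,\z^r_{L\otimes K_X}(r))$; since $g_2:Q_2^o\to\mlk^o$ is a projective bundle with $g_2^{*}\z^r_{L\otimes K_X}(r)\cong\rho_2^{*}\cl^r|_{Q_2^o}$, push-forward along $g_2$ (which satisfies $(g_2)_{*}\mo_{Q_2^o}\cong\mo_{\mlk^o}$) identifies this with $H^0(Q_2^o,\rho_2^{*}\cl^r|_{Q_2^o})$; and finally, because $Q_2^o$ is dense open in $\rho_2^{-1}(\rho_2(Q_2^o))$ by $\cb$-(4) while $\rho_2$ carries $\rho_2^{-1}(\rho_2(Q_2^o))$ onto $\rho_2(Q_2^o)\subseteq H_{\ell}^o$, pulling back along $\rho_2$ gives an injection $H^0(H_{\ell}^o,\cl^r|_{H_{\ell}^o})\hookrightarrow H^0(Q_2^o,\rho_2^{*}\cl^r|_{Q_2^o})$. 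Here one uses that $H_{\ell}$, being a Hilbert scheme of points on a smooth surface, is smooth, so that $H_{\ell}^o$ is reduced and density of the relevant images suffices. Composing everything defines $j_r$, which is injective because each arrow is.

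For the identity $j_2\circ\beta_D=SD_{2,L\otimes K_X}$ (up to a nonzero scalar) I would trace the defining sections through these identifications. Both maps have source $H^0(\wtt^L,\lambda_2(L\otimes K_X))^{\vee}$ (recall $\wtt^{L\otimes K_X}=\wtt^L$) and target $H^0(\mlk,\z^2_{L\otimes K_X}(2))$, and by Proposition~\ref{glob} they are cut out by the section $\sigma_{c_2^2,u_{L\otimes K_X}}$ on $\wtt^L\times\mlk$, respectively by the restriction to $D_{\z_L}$ of $\sigma_{c_2^2,u_L}$ on $\wtt^L\times\ml$. Tensoring the universal versions of the sequences (\ref{dd}) and (\ref{ddt}) over $X\times Q_1^o$, $X\times Q_2^o$ with a flat family of semistable rank-$2$ bundles of class $c_2^2$ (concretely, after pulling back to the Quot-scheme carrying the universal family of class $c_2^2$), and using that $H^i(X,\mf)=0$ for all $i$ and $H^i(X,\mf\otimes K_X)=0$ for all $i$ for every semistable $\mf$ of class $c_2^2$ — the latter because $\mf^{\vee}$ is again semistable of class $c_2^2$, so $H^i(X,\mf^{\vee})=0$ and Serre duality gives $H^0(X,\mf\otimes K_X)\cong H^2(X,\mf^{\vee})^{\vee}=0$, $H^2(X,\mf\otimes K_X)\cong H^0(X,\mf^{\vee})^{\vee}=0$, while $\chi(X,\mf\otimes K_X)=0$ by Riemann--Roch — one gets $H^i(X,\mf\otimes\mf_L)\cong H^i(X,\mf\otimes\mi_Z(L\otimes K_X))\cong H^i(X,\mf\otimes\mf_{L\otimes K_X})$ for all $i$ whenever $\mi_Z(L\otimes K_X)$ surjects onto $\mf_L$, respectively onto $\mf_{L\otimes K_X}$. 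Hence the jumping loci of the two sections over $Q_1^o$ and over $Q_2^o$ are both pulled back from the single divisor $\{(\mf,\mi_Z)\mid h^0(X,\mf\otimes\mi_Z(L\otimes K_X))\neq 0\}$ on $\wtt^L\times H_{\ell}^o$, so the two sections agree after the identifications defining $j_2$. This is the determinant-line-bundle see-saw computation of \cite{Yuan5}, which I would adapt.

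I expect the main obstacle to be the injectivity of the last pull-back along $\rho_2$ — that a section of $\cl^r$ on $H_{\ell}^o$ which pulls back to zero on $Q_2^o$ already vanishes — which is precisely the point for which $\cb$-(3) and $\cb$-(4) are designed: $\cb$-(3) forces $\rho_1(Q_1^o)$, hence (via $H_{\ell}^o=\rho_1(Q_1^o)\cup\rho_2(Q_2^o)$) the whole of $H_{\ell}^o$, to be controlled by $Q_1^o$, while $\cb$-(4) controls how $Q_2^o$ sits over its image. The technical cost is keeping track of possible non-reducedness of the Quot-schemes $Q_1,Q_2$ along the non-generic locus so that the restriction maps on global sections stay injective; a secondary, purely bookkeeping, difficulty is threading the three twists $L$, $L\otimes K_X$ and $K_X$ correctly through the determinant-line-bundle identities in the section-matching step.
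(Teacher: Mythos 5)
The first half of your argument---the construction of $j_r$ as the composite
$H^0(D_{\z_L},\z_L^r(r)|_{D_{\z_L}})\hookrightarrow H^0(D_{\z_L}^o,\cdot)\cong H^0(Q_1^o,\rho_1^{*}\cl^r)\cong H^0(H_{\ell}^o,\cl^r)\hookrightarrow H^0(Q_2^o,\rho_2^{*}\cl^r)\cong H^0(\mlk^o,\cdot)\cong H^0(\mlk,\z^r_{L\otimes K_X}(r))$,
with each arrow justified by the corresponding clause of $\cb$ and by Lemma \ref{change}---is exactly the paper's construction in (\ref{map1})--(\ref{map4}).

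The gap is in the section-matching step. You assert that $H^i(X,\mf\otimes K_X)=0$ for all $i$ and \emph{every} semistable $\mf$ of class $c_2^2$, and conclude that the jumping loci over $Q_1^o$ and over $Q_2^o$ are pulled back from one and the same divisor on $\wtt^L\times H_{\ell}^o$. That vanishing is false precisely on the locus $\ts$ of non-locally-free sheaves: by Lemma \ref{ssl} such an $\mf$ is S-equivalent to $\mi_x\oplus\mi_y$, its reflexive hull has nonzero global sections, and $H^2(X,\mf\otimes K_X)\cong\Hom(\mf,\mo_X)^{\vee}\neq0$ (note also that $(\mf^{\vee\vee})^{\vee}$ has class $c^2_i$ with $i\leq 1$, not $c_2^2$, so the duality argument you give does not apply off the locally free locus). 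This failure is not a removable technicality; it is the whole point. The two kernels in (\ref{dd}) and (\ref{ddt}) are $K_X$ and $\mo_X$ respectively, so after tensoring with the universal family over $\Omega_2^L$ the $Q_2^o$-comparison map ($\eta_2$ in (\ref{lkpd})) is an isomorphism, while the $Q_1^o$-comparison map ($\eta_1$ in (\ref{lpd})) has determinant equal to the pullback of the section of $\lambda_2(K_X^{-1})$ cutting out $\widetilde{\ts}$. Consequently the section inducing $g_L\circ SD_{2,L}$ over $Q_1^o$ equals the section inducing $SD_{2,\ell}$ \emph{times} the $\ts$-section, and it is exactly this extra factor that realizes $f_{2,\Omega}^{\vee}$ and converts the identity $\beta_D\circ f_2^{\vee}=g_L\circ SD_{2,L}$ from diagram (\ref{sddim}) into the claimed $j_2\circ\beta_D=SD_{2,L\otimes K_X}$.

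Taken at face value, your computation would instead prove $j_2\circ\beta_D\circ f_2^{\vee}=SD_{2,L\otimes K_X}$, which is a different (and incorrect) statement, since $f_2^{\vee}$ is dual to multiplication by the $\ts$-section and is not the identity. Relatedly, your assertion that $j_2\circ\beta_D$ is ``cut out by the restriction to $D_{\z_L}$ of $\sigma_{c_2^2,u_L}$'' already conflates the two sources: that restricted section is a section of $\lambda_2(L)\boxtimes(\z_L^2(2)|_{D_{\z_L}})$ and induces a map out of $H^0(\lambda_2(L))^{\vee}$, not out of $H^0(\lambda_2(L\otimes K_X))^{\vee}$; to descend it to the latter one must first divide by the $\ts$-section (using that $\sigma_{c_2^2,u_L}$ vanishes on $\ts\times D_{\z_L}$), which is again the missing divisibility. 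The fix is the paper's diagrams (\ref{lkeq})--(\ref{ld}) and (\ref{leq})--(\ref{qlv}), which track this factor explicitly.
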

\begin{proof}
By $\cb$-(1) we have an injection 
\begin{equation}\label{map1}H^0(D_{\z_L},\z^r_L(r)|_{D_{\z_L}})\hookrightarrow H^0(D_{\z_L}^o,\z^r_L(r)|_{D^o_{\z_L}}).\end{equation}  
By Lemma \ref{change} and $\cb$-(3) we have 
\begin{equation}\label{map2}
H^0(D^o_{\z_L},\z^r_L(r)|_{D^o_{\z_L}})\xrightarrow{\cong} H^0(Q_1^o,\rho_1^{*}\cl^r|_{Q_1^o})\xrightarrow{\cong} H^0(H^o_{\ell},\cl^r|_{H_{\ell}^o}).\end{equation}

On the other hand $\rho_2$ is projective and surjective, hence there is a natural injection $\mo_{H_{\ell}}\hookrightarrow (\rho_2)_*\mo_{Q_2}$.  
Hence by $\cb$-(4) we have the following injections
\begin{equation}\label{map3}H^0(H^o_{\ell},\cl^r|_{H^o_{\ell}})\hookrightarrow H^0(\rho_2(Q_2^o),\cl^r|_{\rho_2(Q_2^o)})\hookrightarrow 
H^0(\rho_2^{-1}(\rho_2(Q_2^o)),\rho_2^{*}\cl^r)\hookrightarrow H^0(Q_2^o,\rho_2^{*}\cl^r)\end{equation}
Finally by Lemma \ref{change} and $\cb$-(2) we have 
\begin{equation}\label{map4}H^0(Q_2^o,\rho_2^{*}\cl^r)\xrightarrow{\cong}H^0(\mlk^o,\z^r_{L\otimes K_X}(r))\xrightarrow{\cong}H^0(\mlk,\z^r_{L\otimes K_X}(r)).
\end{equation}

The map $j_r$ is obtained by composing all the maps successively in (\ref{map1}), (\ref{map2}), (\ref{map3}) and (\ref{map4}).

Now we prove $j_r\circ \beta_D=SD_{2,L\otimes K_X}$.  Notice that $\chi(\me\otimes \mi_Z(L\otimes K_X))=h^2(\me\otimes \mi_Z(L\otimes K_X))=0$ for all $\me\in\wtt $ and $\mi_Z\in H_{\ell}$.  We then have a determinant line bundle $\lambda_2(\ell)$ (resp. $\lambda_{H_{\ell}}(c_2^2)$ ) over $\wtt^L$ (resp. $H_{\ell}$)  associated to $[\mi_Z(L\otimes K_X)]$ with $\mi_Z\in H_{\ell}$ (resp. $[\me]$ with $\me\in\wtt$).  Obviously $\lambda_{H_{\ell}}(c_2^2)=\cl^2$.  Moreover  there is a section $\sigma_{2,\ell}$ of $H^0(\wtt^L\times H_{\ell},\lambda_2(\ell)\boxtimes\cl^2)$ vanishing at the points $(\me,\mi_Z)$ such that $H^0(\me\otimes\mi_Z(L\otimes K))\neq 0$.  By (\ref{dd}), $\lambda_2(L)\cong \lambda_2(\ell)\otimes \lambda_2([K_X])^{-1}\cong \lambda_2(\ell)\otimes\lambda_2(K_X^{-1})$.  Hence $\lambda_2(\ell)\cong\lambda_2(L\otimes K_X)$.

The section $\sigma_{2,\ell}$ induces a morphism $H^0(\wtt^L,\lambda_2(L\otimes K_X))^{\vee}\xrightarrow{SD_{2,\ell}} H^0(H_{\ell},\cl^2)$.  Composing $SD_{2,\ell}$ with the inclusion $H^0(H_{\ell},\cl^2)\hookrightarrow H^0(H_{\ell}^o,\cl^2)$, we get 
$H^0(\wtt^L,\lambda_2(L\otimes K_X))^{\vee}\xrightarrow{SD^o_{2,\ell}} H^0(H^o_{\ell},\cl^2)$.  Composing maps in (\ref{map3}) and (\ref{map4}) and we get 
$H^0(H_{\ell}^o,\cl^2)\xrightarrow{(g_2)_{*}\circ\rho^{*}_2} H^0(\mlk,\z^2_{L\otimes K_X}(2))$. 

We first show that the following diagram commutes.
\begin{equation}\label{lkcom}\xymatrix{H^0(\wtt^L,\lambda_2(L\otimes K_X))^{\vee}\ar[r]^{~~~~~~~SD^o_{2,\ell}}\ar[rd]_{SD_{2,L\otimes K_X}}& H^0(H^o_{\ell},\cl^2)\ar[d]^{(g_2)_{*}\circ\rho^{*}_2}\\
&H^0(\mlk,\z^2_{L\otimes K_X}(2)).}
\end{equation}

Recall that on $X\times Q^o_2$ there is an exact sequence
\[0\ra \R_2\ra(id_X\times\rho_2^*)\I_{\ell}\otimes q^*(L\otimes K_X)\ra\F_{L\otimes K_X}\ra 0,\]
where $\I_{\ell}$ is the universal sheaf over $X\times H_{\ell}$ and $\R_2=p^{*}\mr_2$ with $\mr_2$ a line bundle over $Q^o_2$.  For simplicity let $\widetilde{\I_2}:=(id_X\times\rho_2^*)\I_{\ell}\otimes q^*(L\otimes K_X)$.  

Recall the good $PGL(V)$-quotient $\rho:\Omega_2\ra \wtt$ such that there is a universal sheaf $\E$ over $X\times \Omega_2$.  
Let $\Omega_2^L:=\rho^{-1}(\wtt^L)$.  The map $H^0(\Omega^L_2,\rho^{*}\lambda_2(L\otimes K_X))^{\vee}\xrightarrow{~~\rho^{*\vee}}H^0(\wtt^L,\lambda_2(L\otimes K_X))^{\vee}$ is surjective and hence to show that (\ref{lkcom}) commutes it suffices to show 
\begin{equation}\label{qlkc}SD_{2,L\otimes K_X}\circ \rho^{*\vee}=(g_2)_{*}\circ \rho_2^*\circ SD_{2,\ell}\circ\rho^{*\vee}.\end{equation} 

Over $X\times \Omega^L_2\times Q^o_2$ we have 
\[0\ra p_{12}^*\E\otimes p_{13}^*\R_2\ra p_{12}^*\E\otimes p_{13}^*\widetilde{\I_2}\ra p_{12}^*\E\otimes p_{13}^*\F_{L\otimes K_X}\ra 0.\]
By Lemma 2.1.20 in \cite{HL}, we have the following commutative diagram
\begin{equation}\label{lkeq}\xymatrix@R=0.6cm@C=0.5cm{&&0&0&0&\\&0\ar[r]&p_{12}^*\E\otimes p_{13}^*\R_2\ar[u]\ar[r]&p_{12}^*\E\otimes p_{13}^*\widetilde{\I_2}\ar[r]\ar[u]& p_{12}^*\E\otimes p_{13}^*\F_{L\otimes K_X}\ar[r]\ar[u]&0\\
0\ar[r]&\C_2\ar[r]&\B'_2\ar[u]\ar[r]&\A_2\ar[u]\ar[r]&p_{12}^*\E\otimes p_{13}^*\F_{L\otimes K_X}\ar[r]\ar[u]^{=}&0\\
&&\B_2\ar[u]\ar[r]^{=}&\B_2\ar[u]&&\\&&0\ar[u]&\C_2\ar[u]&&\\&&&0\ar[u]&&,}\end{equation}
where $\A_2$, $\B'_2$, $\B_2$ and $\C_2$ are locally free such that $R^ip_{*}(\cdot)=0$ for all $i<2$ and $R^2p_{*}(\cdot)$ locally free over $\Omega^L_2\times Q^o_2$.  We have the following commutative diagram
\begin{equation}\label{lkpd}\xymatrix{R^2p_*\C_2\ar[r]&R^2p_{*}\B'_2\ar[r]^{\nu'_2}& R^2p_{*}\A_2\\ R^2p_*\C_2\ar[u]^{=}\ar[r]&R^2p_{*}\B_2\ar[u]^{\eta_2}\ar[r]^{\nu_2}& R^2p_{*}\A_2\ar[u]_{=}.}\end{equation}
$\nu'_2$ and $\nu_2$ are surjective because $H^2(\mf_{L\otimes K_X}\otimes\me)=H^2(\mi_Z(L\otimes K_X)\otimes\me)=0$ for every $[\mi_Z(L\otimes K_X)\twoheadrightarrow\mf_{L\otimes K_X}]\in Q^o_2$ and $\me\in \Omega^L_2$.  $\eta_2$ is an isomorphism because $\R_2$ is a pullback of a line bundle on $Q^o_2$ and $H^1(\me)=H^2(\me)=0$ for all $\me\in\Omega^L_2$.  Denote by $\mk_2$ and $\mk'_2$ the kernels of $\nu_2$ and $\nu'_2$ respectively.  Then we have
\begin{equation}\label{lkd}\xymatrix{R^2p_*\C_2\ar[r]^{~~~\xi_{L\otimes K_X}}&\mk'_2\\ R^2p_*\C_2\ar[u]^{=}\ar[r]_{~~~\xi_{\ell}^2}&\mk_2\ar[u]^{\cong}_{\eta_2}.}\end{equation}
The section $det(\xi_{L\otimes K_X})$ induces the map $g_2^*\circ SD_{2,L\otimes K_X}\circ \rho^{*\vee}$ while the section $det(\xi_{\ell}^2)$ induces the map $\rho_2^*\circ SD^o_{2,\ell}\circ \rho^{*\vee}$.
By (\ref{lkd}) we have $det(\xi_{L\otimes K_X})=det(\eta_2)\cdot det(\xi_{\ell}^2)$ and hence $det(\xi_{L\otimes K_X})$ and $det(\xi_{\ell}^2)$ are the same section up to scalars since $\eta_2$ is an isomorphism.  Hence
\begin{equation}\label{qlkv}g_2^*\circ SD_{2,L\otimes K_X}\circ \rho^{*\vee}=\rho_2^*\circ SD^o_{2,\ell}\circ \rho^{*\vee}.\end{equation} 

(\ref{qlkv}) implies (\ref{qlkc}) because $g_2$ is a projective bundle and the map $H^0(Q_2^o,g_2^*\z_{L\otimes K_X}^r(r))\xrightarrow{(g_2)_*} H^0(\mlk^o,\z_{L\times K_X}^r(r))$ is an isomorphism with inverse map $g_2^*$.  

Now we have that (\ref{lkcom}) commutes.  To show $j_r\circ \beta_D=SD_{2,L\otimes K_X}$, it suffices to show that the following diagram commutes.
\begin{equation}\label{lcom}\xymatrix@C=1.5cm{H^0(\Omega^L_2,\lambda_2(L\otimes K_X))^{\vee}\ar[r]^{~~~~~~~SD^o_{2,\ell}\circ \rho^{*\vee}}& H^0(H^o_e,\cl^2)\\
H^0(\Omega^L_2,\lambda_2(L))^{\vee}\ar[u]^{f_{2,\Omega}^{\vee}}\ar[r]_{g_L\circ SD_{2,L}\circ \rho^{*\vee}~~}&H^0(D_{\z_L},\z^2_{L}(2)|_{D_{\z_L}})\ar[u]_{(\rho_1)_{*}\circ g_1^*}.}
\end{equation}
In other words, it suffices to show
\begin{equation}\label{qlc}(\rho_1)_{*}\circ g_1^*\circ g_L\circ SD_{2,L}\circ \rho^{*\vee}=SD^o_{2,\ell}\circ \rho^{*\vee}\circ f_{2,\Omega}^{\vee}.
\end{equation}

Recall that on $X\times Q^o_1$ there is an exact sequence
\[0\ra \R_1\ra(id_X\times\rho_1^*)\I_{\ell}\otimes q^*(L\otimes K_X)\ra\F_{L}\ra 0,\]
where $\I_{\ell}$ is the universal sheaf over $X\times H_{\ell}$ and $\R_1=p^{*}\mr_1\otimes q^*K_X$ with $\mr_1$ a line bundle (actually the relative tautological bundle $\mo_{\rho_1}(-1)$) over $Q^o_1$.  Let $\widetilde{\I_1}:=(id_X\times\rho_1^*)\I_{\ell}\otimes q^*(L\otimes K_X)$.  

Over $X\times \Omega^L_2\times Q^o_1$ we have 
\[0\ra p_{12}^*\E\otimes p_{13}^*\R_1\ra p_{12}^*\E\otimes p_{13}^*\widetilde{\I_1}\ra p_{12}^*\E\otimes p_{13}^*\F_{L}\ra 0.\]
Analogously, we have the following commutative diagram
\begin{equation}\label{leq}\xymatrix@R=0.6cm@C=0.5cm{&&0&0&0&\\&0\ar[r]&p_{12}^*\E\otimes p_{13}^*\R_1\ar[u]\ar[r]&p_{12}^*\E\otimes p_{13}^*\widetilde{\I_1}\ar[r]\ar[u]& p_{12}^*\E\otimes p_{13}^*\F_{L}\ar[r]\ar[u]&0\\
0\ar[r]&\C_1\ar[r]&\B'_1\ar[u]\ar[r]&\A_1\ar[u]\ar[r]&p_{12}^*\E\otimes p_{13}^*\F_{L}\ar[r]\ar[u]^{=}&0\\
&&\B_1\ar[u]\ar[r]^{=}&\B_1\ar[u]&&\\&&0\ar[u]&\C_1\ar[u]&&\\&&&0\ar[u]&&,}\end{equation}
where $\A_1$, $\B'_1$, $\B_1$ and $\C_1$ are locally free such that $R^ip_{*}(\cdot)=0$ for all $i<2$ and $R^2p_{*}(\cdot)$ locally free over $\Omega^L_2\times Q^o_1$.  We have the following commutative diagram
\begin{equation}\label{lpd}\xymatrix{R^2p_*\C_1\ar[r]&R^2p_{*}\B'_1\ar[r]^{\nu'_1}& R^2p_{*}\A_1\\ R^2p_*\C_1\ar[u]^{=}\ar[r]&R^2p_{*}\B_1\ar[u]^{\eta_1}\ar[r]^{\nu_1}& R^2p_{*}\A_1\ar[u]_{=}.}\end{equation}
$\nu'_1$ and $\nu_1$ are surjective because $H^2(\mf_{L}\otimes\me)=H^2(\mi_Z(L\otimes K_X)\otimes\me)=0$ for every $[\mi_Z(L\otimes K_X)\twoheadrightarrow\mf_{L}]\in Q^o_1$ and $\me\in \Omega^L_2$.  $\eta_1$ is a morphism between two vector bundles with same rank with cokernel $R^2p_{*}(p_{12}^*\E\times p_{13}^*\R_1)$.  Since $\R_1\cong p^{*}\mr_1\otimes q^*K_X$ with $\mr_1$ a line bundle over $Q_1^o$, $det(\eta_1)$ is the pullback to $\Omega^L_2\times Q_1^o$ of the section of $\lambda_2([K_X]^{-1})\cong \lambda_2(K_X^{-1})$ defining the subscheme $\widetilde{\ts}$.  

Denote by $\mk_1$ and $\mk'_1$ the kernels of $\nu_1$ and $\nu'_1$ respectively.  Then we have
\begin{equation}\label{ld}\xymatrix{R^2p_*\C_1\ar[r]^{~~~\xi_{L}}&\mk'_1\\ R^2p_*\C_1\ar[u]^{=}\ar[r]_{~~~\xi_{\ell}^1}&\mk_1\ar[u]_{\eta_1}.}\end{equation}
The section $det(\xi_{L})$ induces the map $g_1^*\circ g_L\circ SD_{2,L}\circ \rho^{*\vee}$, the section $det(\xi_{\ell}^1)$ induces the map $\rho_1^*\circ SD^o_{2,\ell}\circ \rho^{*\vee}$ and multiplying the section $det(\eta_1)$ induces the map $f_{2,\Omega}^{\vee}$.
By (\ref{ld}) we have $det(\xi_{L})=det(\eta_1)\cdot det(\xi_{\ell}^1)$ and hence 
\begin{equation}\label{qlv}g_1^*\circ g_L\circ SD_{2,L}\circ \rho^{*\vee}=\rho_1^{*}\circ SD^o_{2,\ell}\circ \rho^{*\vee}\circ f_{2,\Omega}^{\vee}.
\end{equation} 

(\ref{qlv}) implies (\ref{qlc}) because by $\cb$-(3) the map $H^0(Q_1^o,\rho_1^*\cl^r)\xrightarrow{(\rho_1)_*} H^0(H^o_{\ell},\cl^r)$ is an isomorphism with inverse map $\rho_1^*$.  

The lemma is proved.
\end{proof}

Now we want to modify $\cb$.  
Define
$$|L\otimes K_X|':=\left\{C\in|L\otimes K_X|\left|\begin{array}{l} For~every~integral~subscheme~C_1\subset C, \\we~have~ deg(K_X|_{C_1})<0.
\end{array}\right.\right\},$$
$$\mlk':=\left\{\mf_{L\otimes K_X}\in\mlk\left|\begin{array}{l}
h^0(\mf_{L\otimes K_X}(K_X))=0,~and\\ 
Supp(\mf_{L\otimes K_X})\in |L\otimes K_X|'.
\end{array}\right.\right\}.$$
$$Q_2':=\left\{[f_2:\mi_Z(L\otimes K_X)\twoheadrightarrow \mf_{L\otimes K_X}]\in Q_2\left|\begin{array}{l}
\mf_{L\otimes K_X}~is~semistable,\\ h^0(\mf_{L\otimes K_X}(K_X))=0,~and\\ 
Supp(\mf_{L\otimes K_X})\in |L\otimes K_X|'.
\end{array}\right.\right\}.$$

Let $f_M:\Omega_{L\otimes K_X}\ra \mlk
$ be the good $PGL(V_{L\otimes K_X})$-quotient with $V_{L\otimes K_X}$ some vector space and $\Omega_{L\otimes K_X}$ an open subscheme of some Quot-scheme.   Let
$\Omega'_{L\otimes K_X}:= f_M^{-1}(\mlk')$.  Notice that $\Ext^2(\mf_{L\otimes K_X},\mf_{L\otimes K_X})=0$ for $\mf_{L\otimes K_X}$ semistable with $Supp(\mf_{L\otimes K_X})\in |L\otimes K_X|'$.
Hence $\Omega'_{L\otimes K_X}$ is smooth of pure dimension the expected dimension.

Denote by $\Q_{L\otimes K_X}$ the universal quotient over $\Omega_{L\otimes K_X}$.  Analogous to \cite{Yuan5}, define $\mv':=\mathscr{E}xt_p^1(\Q_{L\otimes K_X}|_{\Omega'_{L\otimes K_X}},q^{*}\mo_X)$ which is locally free of rank $-(L+K_X).K_X$ on $\Omega'_{L\otimes K_X}$.  
Let $P'_2\subset\p(\mv')$ parametrizing torsion free extensions of $\Q_{\mathfrak{s}}$ by $\mo_X$ for all $\mathfrak{s}\in \Omega'_{L\otimes K_X}$.  Then the classifying map $f'_{Q_2}: P'_2\ra Q'_2$ is a principal $PGL(V_{L\otimes K_X})$-bundle (see Lemma 4.7 in \cite{Yuan5}).  We have the following commutative diagram
\begin{equation}\label{pie}\xymatrix@C=2cm{P_2'\ar[r]^{\sigma'_2}\ar[d]_{f_{Q_2}'} &\Omega'_{L\otimes K_X}\ar[d]^{f'_M}\\  Q_2'\ar[r]_{g_2'~~~~~~~~~~~} &\mlk' }.\end{equation}   

Let $H_{\ell}':=\rho_1(Q_1^o)\cup\rho_2(Q_2')$.  We define $\cb'$ by keeping $\cb$-(1) and replacing $\cb$-(2), (3) and (4) by $(2'a)$, $(2'b)$, $(3)$ and $(4')$ as follows.
\begin{enumerate}
\item[$(2'a)$]
$M(L\otimes K_X,0)$ is of pure dimension and satisfies the ``condition $S_2$ of Serre", and the complement of $M(L\otimes K_X,0)'$ is of codimension $\geq 2$;  
\item[$(2'b)$] 
The complement of $P_2'$ in $\p(\mv')$ is of codimension $\geq 2$; %
\item[$(3')$] $(\rho_1)_{*}\mo_{Q_1^o}\cong\mo_{H_{\ell}'}$;
\item[$(4')$] $Q'_2$ is nonempty and dense open in $\rho_2^{-1}(\rho_2(Q'_2))$.
\end{enumerate}
\begin{lemma}\label{zero}
If $\cb'$ is satisfied, then there is an injective map for all $r>0$
\[j_r: H^0(D_{\z_L},\z_L^r(r)|_{D_{\z_L}})\hookrightarrow H^0(M(L\otimes K_X,0),\z^r_{L\otimes K_X}(r)),\]
such that $j_2\circ\beta_D=SD_{2,L\otimes K_X}$.\end{lemma}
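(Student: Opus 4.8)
The plan is to mirror the proof of Lemma \ref{injm}, replacing the ``interior'' loci and conditions $\cb$ by their primed analogues $\cb'$, and supplying the one extra ingredient that is now needed: a codimension estimate on $\p(\mv')$ in place of the direct $S_2$ statement about $M(L\otimes K_X,0)^o$. First I would reconstruct the chain of restriction and pullback maps (\ref{map1})--(\ref{map4}) verbatim, but with $M(L\otimes K_X,0)^o$ replaced by $\mlk'$, with $Q_2^o$ replaced by $Q_2'$, and with $H_{\ell}^o$ replaced by $H_{\ell}'$. Concretely: (\ref{map1}) is unchanged (it uses only $\cb'$-(1)); (\ref{map2}) uses Lemma \ref{change} together with $\cb'$-(3') to identify $H^0(D^o_{\z_L},\z^r_L(r))$ with $H^0(Q_1^o,\rho_1^{*}\cl^r)$ and with $H^0(H_\ell',\cl^r|_{H_\ell'})$; (\ref{map3}) uses that $\rho_2$ is projective and surjective, plus $\cb'$-(4'), to inject $H^0(H_\ell',\cl^r)$ into $H^0(Q_2',\rho_2^{*}\cl^r)$.

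The only genuinely new step is the analogue of (\ref{map4}): I need $H^0(Q_2',\rho_2^{*}\cl^r)\cong H^0(\mlk',\z^r_{L\otimes K_X}(r))\cong H^0(\mlk,\z^r_{L\otimes K_X}(r))$. The second isomorphism is exactly $\cb'$-($2'a$) via Remark \ref{serre}. For the first, I would use diagram (\ref{pie}): $f'_{Q_2}\colon P'_2\to Q'_2$ is a principal $PGL(V_{L\otimes K_X})$-bundle, so $H^0(Q_2',\rho_2^{*}\cl^r)\cong H^0(P_2',(f'_{Q_2})^{*}\rho_2^{*}\cl^r)^{PGL}$, and via the other leg $\sigma_2'$ of (\ref{pie}), $P_2'$ is the open subscheme of $\p(\mv')$ over $\Omega'_{L\otimes K_X}$ parametrizing torsion-free extensions. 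Now $\cb'$-($2'b$) — that $\p(\mv')\setminus P_2'$ has codimension $\geq 2$ — together with the smoothness of $\p(\mv')$ (which follows since $\Omega'_{L\otimes K_X}$ is smooth, as noted in the excerpt, and $\p(\mv')$ is a projective bundle over it) lets me extend sections from $P_2'$ to $\p(\mv')$; then I push forward along $\p(\mv')\to\Omega'_{L\otimes K_X}$ and descend to $\mlk'$, using that $g_2'$ is (after the $PGL$-quotient) a projective bundle so that $H^0(Q_2',g_2'^{*}\z^r_{L\otimes K_X}(r))\cong H^0(\mlk',\z^r_{L\otimes K_X}(r))$ with inverse $g_2'^{*}$. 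Composing everything gives $j_r$, and injectivity is automatic since every map in the chain is injective.

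For the compatibility $j_2\circ\beta_D=SD_{2,L\otimes K_X}$, I would repeat the argument of Lemma \ref{injm} essentially word for word. The determinant-line-bundle section $\sigma_{2,\ell}$ on $\wtt^L\times H_\ell$ and the maps $SD_{2,\ell}$, $SD^o_{2,\ell}$ are defined exactly as before (they do not reference the primed loci), and the two key commutative squares — diagram (\ref{lkcom}), proved via the locally free resolutions (\ref{lkeq})--(\ref{lkd}) on $X\times\Omega_2^L\times Q_2^o$, and diagram (\ref{lcom}), proved via (\ref{leq})--(\ref{ld}) on $X\times\Omega_2^L\times Q_1^o$ with the crucial observation that $\det(\eta_1)$ is the section cutting out $\widetilde{\ts}$ — go through unchanged, since $Q_1^o$ is the same scheme in both $\cb$ and $\cb'$ and the only place $Q_2$ enters diagram (\ref{lkcom}) is through $Q_2^o$, which I now simply replace by $Q_2'$ (the identity $H^2(\mf_{L\otimes K_X}\otimes\me)=0$ still holds for $\me\in\Omega_2^L$ and the vanishing $H^2(\mf_{L\otimes K_X})=0$ is automatic on a rational surface). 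I expect the codimension extension step in the previous paragraph — verifying that sections over $P_2'$ extend over $\p(\mv')$ and then descend compatibly with $SD_{2,\ell}$ — to be the main obstacle, since it is the one point where $\cb'$ differs structurally from $\cb$; everything else is a transcription of Lemma \ref{injm}.
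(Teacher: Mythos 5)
Your proposal matches the paper's proof: the paper likewise reduces everything to showing $(g_2')_{*}\mo_{Q_2'}\cong\mo_{\mlk'}\cong\mo_{\mlk}$, which it obtains exactly as you do — from $(\sigma_2)_{*}\mo_{\p(\mv')}\cong\mo_{\Omega'_{L\otimes K_X}}$ together with $\jmath_{*}\mo_{P_2'}\cong\mo_{\p(\mv')}$ (smoothness of $\Omega'_{L\otimes K_X}$ plus $\cb'$-$(2'b)$) and descent along the $PGL$-quotients in (\ref{pie}) — and then transcribes Lemma \ref{injm}. One caveat on wording: $g_2'$ is emphatically \emph{not} a projective bundle (that is the whole point of this lemma versus Lemma \ref{injm}); what your argument actually establishes, and all that is needed, is $(g_2')_{*}\mo_{Q_2'}\cong\mo_{\mlk'}$.
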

\begin{proof}The only difference from Lemma \ref{injm} is that the map $g_2'$ is no more a projective bundle.  
However it is enough to prove $(g_2')_{*}\mo_{Q_2'}\cong \mo_{\mlk'}\cong \mo_{\mlk}$.

In (\ref{pie}) we have $f'_{Q_2}$ a principal $PGL(V_{L\otimes K_X})$-bundle and $f'_M$ a good $PGL(V_{L\otimes K_X})$-quotient.  $\sigma'_2$ is $PGL(V_{L\otimes K_X})$-equivariant and descends to the map $g'_2$.  
In order to show $(g'_{2})_{*}\mo_{Q'_2}\cong \mo_{|L\otimes K_X|'}$, we only need to show that $(\sigma'_{2})_{*}\mo_{P'_2}\cong\mo_{\Omega'_{L\otimes K_X}}$.  

We have that $(\sigma_{2})_{*}\mo_{\p(\mv')}\cong\mo_{\Omega'_{L\otimes K_X}}$.  $\Omega'_{L\otimes K_X}$ is smooth of pure dimension.  By $\cb'$-$(2'b)$ the complement of $P'_2$ in $\p(\mv')$ is of codimension $\geq2$ and hence $\jmath_{*}\mo_{P'_2}\cong \mo_{\p(\mv')}$ with $\jmath:P'_2\hookrightarrow \p(\mv')$ the embedding.   On the other hand $\sigma'_2=\sigma_2\circ \jmath$, hence $(\sigma'_{2})_{*}\mo_{P'_2}\cong(\sigma_{2})_{*}( \jmath_{*}\mo_{P'_2})\cong(\sigma_{2})_{*}\mo_{\p(\mv')}\cong\mo_{\Omega'_{L\otimes K_X}}$.  Hence the lemma  
\end{proof} 
Notice that $\cb$-(2) $\Rightarrow$ $\cb'$-$(2'a)$ if $(L+K_X).K_X<0$.  Lemma \ref{injm} and Lemma \ref{zero} imply immediately the following proposition.
\begin{prop}\label{beta}If either $\cb$ or $\cb'$ is satisfied and $SD_{2,L\otimes K_X}$ is an isomorphism, then the map $\beta_D$ in (\ref{sddim}) is an isomorphism.  In particular, $g_L$ is surjective. 
\end{prop}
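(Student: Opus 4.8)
The plan is to reduce the statement entirely to the two preceding lemmas, which already carry all the geometric content, and then to finish by a short formal argument at the level of the vector spaces of sections.

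First I would apply Lemma~\ref{injm} in the case $\cb$ holds, and Lemma~\ref{zero} in the case $\cb'$ holds. In \emph{either} case these lemmas produce, taking $r=2$, an injective map
\[
j_2:H^0(D_{\z_L},\z_L^2(2)|_{D_{\z_L}})\hookrightarrow H^0(\mlk,\z^2_{L\otimes K_X}(2)),
\]
together with the key identity $j_2\circ\beta_D=SD_{2,L\otimes K_X}$. Thus the hypotheses of the proposition supply exactly two facts: that $j_2$ is injective, and that the composite $j_2\circ\beta_D$ equals the isomorphism $SD_{2,L\otimes K_X}$.

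Next comes the purely formal step. Since $j_2\circ\beta_D$ is an isomorphism it is surjective, and its image is contained in the image of $j_2$; hence $j_2$ is surjective as well. Being simultaneously injective (from the lemma) and surjective, $j_2$ is an isomorphism, so $\beta_D=j_2^{-1}\circ SD_{2,L\otimes K_X}$ is a composite of isomorphisms and is therefore an isomorphism. This settles the first assertion uniformly for both $\cb$ and $\cb'$.

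For the \emph{in particular} clause I would read off the right-hand commutative square of the diagram (\ref{sddim}), namely $g_L\circ SD_{2,L}=\beta_D\circ f_2^{\vee}$. Here $f_2^{\vee}$ is surjective, being the dual of the injection of global sections coming from (\ref{ses}) (the dual of an injection of finite-dimensional vector spaces is a surjection). Since $\beta_D$ is now known to be an isomorphism, $\beta_D\circ f_2^{\vee}$ is surjective, hence so is $g_L\circ SD_{2,L}$, and a surjective composite forces its outer factor $g_L$ to be surjective. I do not expect a genuine obstacle at this stage: all of the delicate work --- constructing $j_r$, matching the determinant line bundles via Lemma~\ref{change}, and verifying the commuting diagrams (\ref{lkcom})--(\ref{ld}) --- has already been absorbed into Lemmas~\ref{injm} and \ref{zero}. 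The only point worth double-checking is that the factorization $j_2\circ\beta_D=SD_{2,L\otimes K_X}$ is available under both $\cb$ and $\cb'$, which it is, so that the same two-line argument closes both cases without modification.
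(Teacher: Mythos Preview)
Your proposal is correct and follows exactly the route the paper intends: the paper's own proof is the single sentence ``Lemma~\ref{injm} and Lemma~\ref{zero} imply immediately the following proposition,'' and what you have written is precisely the formal unpacking of that implication. The factorization $j_2\circ\beta_D=SD_{2,L\otimes K_X}$ with $j_2$ injective forces both $j_2$ and $\beta_D$ to be isomorphisms, and your deduction of the surjectivity of $g_L$ from the right-hand square of (\ref{sddim}) and the surjectivity of $f_2^{\vee}$ is exactly right.
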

\begin{rem}\label{lek}
If $L\cong K_X^{-1}$ then $\beta_D$ is an isomorphism as long as $\forall~C\in\ls$, $\mo_C$ is stable (which is equivalent to say that $C$ contains no subcurve with genus $\geq1$) and there is a stable vector bundle $\me\in\wtt$.  This is because in this case $\beta_D$ is a nonzero map between two vector spaces of 1 dimension, hence an isomorphism.  $\beta_D$ is nonzero since $H^0(\me\otimes\mo_C)=H^1(\me\otimes\mo_C)=0$ for all $C\in\ls$ ( also see the proof of Proposition 6.25 in \cite{GY}).
\end{rem}

Combining Lemma \ref{alpha} and Proposition \ref{beta} we have the following theorem.

\begin{thm}\label{maino}Assume $\ca$ and either $\cb$ or $\cb'$ are satisfied, then $SD_{2,L}$ is an isomorphism if $SD_{2,L\otimes K_X}$ is an isomorphism.\end{thm}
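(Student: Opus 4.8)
The plan is to deduce the statement formally from the commutative diagram (\ref{sddim}) by a five-lemma argument, feeding in Lemma \ref{alpha} and Proposition \ref{beta} to supply the two outer vertical isomorphisms together with the exactness of the rows at their endpoints.

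First I would record that both rows of (\ref{sddim}) are left exact for free. The bottom row $0\to H^0(\z_L(2))\xrightarrow{f_L}H^0(\z_L^2(2))\xrightarrow{g_L}H^0(D_{\z_L},\z^2(2)|_{D_{\z_L}})$ is obtained by applying the left-exact functor $H^0(\ml,-)$ to the short exact sequence (\ref{zes}), hence it is exact at $H^0(\z_L(2))$ and at $H^0(\z_L^2(2))$. Dually, the top row $H^0(\ts,\lambda_2(L)|_{\ts})^\vee\xrightarrow{g_2^\vee}H^0(\lambda_2(L))^\vee\xrightarrow{f_2^\vee}H^0(\lambda_2(L\otimes K_X))^\vee\to0$ is the vector-space dual of the left-exact sequence gotten from (\ref{ses}) by applying $H^0(\wtt^L,-)$, hence it is exact at $H^0(\lambda_2(L))^\vee$ and at $H^0(\lambda_2(L\otimes K_X))^\vee$. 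So, even before using the hypotheses, (\ref{sddim}) commutes and its rows are exact except possibly at the leftmost term of the top row (injectivity of $g_2^\vee$) and at the rightmost term of the bottom row (surjectivity of $g_L$).

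Next I would invoke the hypotheses. Since $\ca$ holds, Lemma \ref{alpha} says that $\alpha_{\ts}$ is an isomorphism and that $g_2^\vee$ is injective, so the top row of (\ref{sddim}) is a short exact sequence. Since $\cb$ or $\cb'$ holds and $SD_{2,L\otimes K_X}$ is an isomorphism, Proposition \ref{beta} says that $\beta_D$ is an isomorphism and that $g_L$ is surjective, so the bottom row of (\ref{sddim}) is a short exact sequence. Now (\ref{sddim}) is a commutative diagram with short exact rows whose outer verticals $\alpha_{\ts}$ and $\beta_D$ are isomorphisms, and the short five lemma then forces the middle vertical $SD_{2,L}$ to be an isomorphism. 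Unwinding the lemma by hand: injectivity of $SD_{2,L}$ uses injectivity of $\beta_D$, of $f_L$, and of $\alpha_{\ts}$ together with exactness of the top row at $H^0(\lambda_2(L))^\vee$; surjectivity uses surjectivity of $f_2^\vee$ and of $\beta_D$, injectivity of $f_L$, surjectivity of $\alpha_{\ts}$, and exactness of the bottom row at $H^0(\z_L^2(2))$.

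The main point is that there is essentially no obstacle left at this stage: all the geometric content — the description of $\ts$ as a divisor attached to a determinant line bundle, the relative Quot-scheme comparisons producing the injections $j_r$, and the analysis of the classifying maps $g_1,g_2,g_2'$ — has already been absorbed into Lemma \ref{alpha} and Proposition \ref{beta}. The only thing that demands care is the bookkeeping: confirming that the two rows of (\ref{sddim}) are exactly, respectively, the vector-space dual of $H^0(\wtt^L,-)$ applied to (\ref{ses}) and $H^0(\ml,-)$ applied to (\ref{zes}), with the maps $f_L,g_L,f_2^\vee,g_2^\vee,\alpha_{\ts},\beta_D$ labelled consistently, and that the stated hypotheses are precisely the inputs that Lemma \ref{alpha} and Proposition \ref{beta} consume. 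Once that is in place the conclusion is purely formal.
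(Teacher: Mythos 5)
Your proposal is correct and is essentially the paper's own argument: the paper proves Theorem \ref{maino} simply by ``combining Lemma \ref{alpha} and Proposition \ref{beta}'', i.e.\ exactly the five-lemma chase on the commutative diagram (\ref{sddim}) that you spell out. Your explicit verification of the exactness of the rows and the unwinding of the chase is a faithful (slightly more detailed) rendering of what the paper leaves implicit.
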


\subsection{Application to Hirzebruch surfaces.}\label{app}~~

Theorem \ref{maino} applies to a large number of cases on Hirzebruch surface as stated in the following theorem.
\begin{thm}\label{ruled}Let $X=\Sigma_e~(e\geq0):=\p(\mo_{\p^1}\oplus\mo_{\p^1}(e))$.  Let $F$ be the fiber class and $G$ the section such that $G^2=-e$ over $X$.  
Let $L=aG+bF$.  Then

(1) $\ca$ is fulfilled for $L$ ample or $\min\{a,b\}\leq1$.

(2) If $2\leq\min\{a,b\}\leq 3$, then $\cb'$ is fulfilled for $L$ ample, i.e. $b> ae$ for $e\neq0$; or $a,b>0$ for $e=0$.

(3) If $\min\{a,b\}\geq 4$, then $\cb$ is fulfilled for both $L$ and $L\otimes K_X$ ample, i.e. $b> ae, e>1$; or $b> a+1,e=1$; or $a,b\geq4,e=0$.
\end{thm}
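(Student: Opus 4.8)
The plan is to verify Conditions $\ca$, $\cb$ and $\cb'$ directly by concrete geometry on the Hirzebruch surface $\Sigma_e$, reducing everything to statements about linear systems of curves in $|L|$, $|L\otimes K_X|$ and their restrictions. Recall $K_{\Sigma_e}=-2G-(e+2)F$, so $L\otimes K_X=(a-2)G+(b-e-2)F$; the case splitting on $\min\{a,b\}$ is exactly designed so that $L\otimes K_X$ stays effective (and eventually ample) as one iterates $L\mapsto L\otimes K_X$, which is what Theorem \ref{maino} will ultimately consume. For part (1), to establish $\ca$ — that $SD_{c^1_2,u_L}:H^0(X^{[2]},L_{(2)})^\vee\to H^0(\ml,\z_L(2))$ is an isomorphism — I would follow the template of \cite{Yuan5} (and \cite{Yuan1}): show that a generic $\mf\in\ml$ sits in an extension $0\to\mo_X\to\mi_Z(L)\to\mf\to0$ with $Z$ of length $\ell=L.(L+K_X)/2$, identify $D_{\z_L}^o$ with an appropriate relative Quot-scheme over $H_\ell=X^{[\ell]}$, and match the two determinant line bundles and the two natural sections via a diagram chase exactly as in Lemma \ref{change} and Lemma \ref{injm}. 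Here the rationality of $\Sigma_e$ and vanishing $H^2(L)=H^2(L\otimes K_X)=0$ (from $K_X.H<0$) make the cohomology-and-base-change arguments go through; for $\min\{a,b\}\le1$ the moduli space $\ml$ and the linear system $|L|$ degenerate to products of projective spaces and $H^0$ computations are explicit.

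For parts (2) and (3) the core is a dimension count. The conditions $\cb$-(2) and $\cb'$-$(2'a)$ ask that $M(L\otimes K_X,0)$ be pure-dimensional, satisfy Serre's $S_2$, and that the "good" loci $M^o$ or $M'$ have complement of codimension $\ge 2$; condition $\cb'$-$(2'b)$ asks the same for $P_2'\subset\p(\mv')$. I would prove purity and $S_2$ by noting that for $L\otimes K_X$ ample on $\Sigma_e$ (with $e\ne1$ one needs $b>ae$; with $e=1$ one needs $b>a+1$), a generic curve in $|L\otimes K_X|$ is smooth integral, so $M(L\otimes K_X,0)$ is generically a compactified Jacobian fibration over $|L\otimes K_X|$ and is in fact smooth there; then control the non-integral and singular-support loci by estimating, for each reducible/non-reduced configuration $C=\sum C_i$, the dimension of the sublocus of $|L\otimes K_X|$ of such curves plus the relative Picard dimension, and check this is $\le \dim M(L\otimes K_X,0)-2$. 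This is the step I expect to be the main obstacle: it is the bulk of \S\ref{app}, it is where the numerical hypotheses ($\min\{a,b\}\ge4$ for $\cb$ versus $2\le\min\{a,b\}\le3$ for $\cb'$, and the sharp bound $b\ge a+[a/2]$ on $\Sigma_1$) actually get used, and it requires a careful enumeration of boundary strata of $|L\otimes K_X|$ on each $\Sigma_e$ using the intersection numbers $F^2=0$, $G.F=1$, $G^2=-e$. The reason $\cb'$ (rather than $\cb$) is needed for small $\min\{a,b\}$ is precisely that then $L\otimes K_X$ fails to be ample while still being effective, so one must restrict to the sub-linear-system $|L\otimes K_X|'$ of curves with all integral components of negative $K_X$-degree and bound the complement inside $\p(\mv')$ instead.

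For the remaining parts of $\cb$/$\cb'$: condition (1), that $D_{\z_L}^o$ is dense open in $D_{\z_L}$, follows once I know a generic element of $D_{\z_L}$ has integral support and $h^1=1$, which comes from the extension description together with the ampleness/effectivity bounds via Lemma \ref{dnempty} and vanishing theorems on $\Sigma_e$. Conditions $\cb$-(3)/$\cb'$-(3'), that $(\rho_1)_*\mo_{Q_1^o}\cong\mo_{H_\ell'}$ (resp. $\mo_{H_\ell^o}$), I would get from the fact that $g_1$ is an isomorphism (Lemma \ref{change}) and $Q_1^o$ surjects onto $H_\ell^o$ with connected fibres — really a statement that $h^0(\mi_Z(L))$ is as expected for generic $Z$, i.e. $H^1(\mi_Z(L))=0$, which holds when $|L|$ is ample enough, handled case by case. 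Conditions $\cb$-(4)/$\cb'$-(4'), density of $Q_2^o$ (resp. $Q_2'$) in the fibre $\rho_2^{-1}(\rho_2(-))$, reduce to: over a generic ideal sheaf $\mi_Z\in H_\ell$ in the image, the generic quotient $\mi_Z(L\otimes K_X)\twoheadrightarrow\mf_{L\otimes K_X}$ has semistable $\mf_{L\otimes K_X}$ with integral support in $|L\otimes K_X|'$ (resp. $|L\otimes K_X|^o$) — again an openness/non-emptiness claim provable by exhibiting one such quotient. Having verified $\ca$ and one of $\cb,\cb'$ in each range, the theorem follows by feeding these into Theorem \ref{maino}, with the base of the induction $L\mapsto L\otimes K_X$ being a case where $\min\{a,b\}\le1$ (so Corollary \ref{empty} or Remark \ref{gzero}/\ref{lek} applies and $SD_{2,L\otimes K_X}$ is visibly an isomorphism), which is why the hypotheses are arranged to keep the iterate inside the controlled region until it lands there.
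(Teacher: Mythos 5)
Your outline reproduces the architecture of the paper's proof — verify $\ca$ via the identification $H^0(X^{[2]},L_{(2)})\cong S^2H^0(L)$ and the structure of $\pi:\ml\to\ls$, verify $\cb$/$\cb'$ by stratified dimension counts, and feed the result into Theorem \ref{maino} with the induction $L\mapsto L\otimes K_X$ terminating at $\min\{a,b\}\leq 1$ — and you correctly identify the codimension estimates as the crux. The problem is that the proposal defers precisely that crux, and where you do sketch how individual conditions would be checked, several sketches are qualitatively too weak to close. Most seriously, for $\cb$-(3)/$\cb'$-$(3')$ you reduce to ``$h^0(\mi_Z(L))$ is as expected for generic $Z$, i.e.\ $H^1(\mi_Z(L))=0$.'' Generic vanishing does not give $(\rho_1)_*\mo_{Q_1^o}\cong\mo_{H_\ell^o}$: one must show $H_\ell\setminus\rho_1(Q_1^o)$ has dimension $\leq 2\ell-2$, i.e.\ bound the jumping loci $H_\ell^{L,s,t}=\{Z: h^1(\mi_Z(L\otimes K_X))=s-1,\ h^1(\mi_Z(L))=t\}$ for all $s\geq 2$ by codimension $2$, together with $\dim\bigl(\rho_1^{-1}(\rho_1(Q_1^o))\setminus Q_1^o\bigr)\leq L^2-2$. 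The paper does this by mapping $\p\bigl(p_*(\I_\ell\otimes q^*(L\otimes K_X))|_{H_\ell^{L,s,t}}\bigr)$ into the stack $\mm^d(L\otimes K_X,0)$, controlling fibers by $\operatorname{ext}^1(\mf_{L\otimes K_X},\mo_X)$, and invoking the bounds $\dim\mw^{int}_{k,i}(L\otimes K_X,0)\leq (L+K_X)^2-iK_X.(L+K_X)-k$ imported from \cite{Yuan4} (Proposition \ref{dnki}); nothing in your plan produces these inequalities. The same issue recurs for $\cb$-(1): ``a generic element of $D_{\z_L}$ has integral support and $h^1=1$'' is not the statement needed; one needs $\dim(D_{\z_L}\setminus D_{\z_L}^o)\leq L^2-1$, and the borderline reducible strata $\mc_{2G+(b-1)F,\,F}$ and $\mc_{(a-1)G+(ae+1)F,\,G}$ come out at exactly $L^2-1$ under the naive count of ``sublocus of the linear system plus relative Picard dimension''; the paper squeezes them to $L^2-3$ only by analyzing extensions $0\to\mo_C\to\mf\to\mo_Z\to 0$ and using Iarrobino's bound $\dim C^{[g_L-1]}\leq g_L-1$ for curves with planar singularities (Corollary \ref{coddz}).

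Two further gaps of the same kind: your purity/$S_2$ argument for $M(L\otimes K_X,0)$ via ``generically a compactified Jacobian fibration'' does not by itself give normality or the codimension-$2$ bound on the non-integral locus; the paper's Lemma \ref{codim} requires the explicit Harder--Narasimhan analysis of sheaves supported on multiples of $G$ (where $G^2=-e<0$ forces them onto $G$ and gives $\dim\mc^d(nG,\chi)\leq -n^2$) and the estimate (\ref{redd}) over all decompositions $\sum n_iL_i=L$ — this is exactly where $\min\{a,b\}\geq 2$ (and, for $\cb$, the stronger hypothesis that $L\otimes K_X$ is itself ample with $\min\geq 2$, forcing $\min\{a,b\}\geq 4$) enters, and your proposal never engages with these numerics. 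Finally, for $\cb'$-$(2'b)$ the paper must produce torsion-free extensions in $\Ext^1(\mf_{L\otimes K_X},\mo_X)$ for sheaves on the reducible members of $|L\otimes K_X|'=|G+nF|'$ or $|nF|$ by comparing $\operatorname{ext}^1(\mf/\mf',\mo_X)$ with $\operatorname{ext}^1(\mf,\mo_X)$ for all subsheaves $\mf'$; your one-sentence reduction to ``exhibiting one such quotient'' addresses non-emptiness but not the codimension-$2$ bound on the complement of $P_2'$ in $\p(\mv')$. In short, the skeleton is right but essentially all of the mathematics that makes the theorem's numerical hypotheses necessary and sufficient is absent, and the places where you do commit to an argument would not suffice as stated.
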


\begin{coro}\label{ok}Let $X$ be a Hirzebruch surface $\Sigma_e$ and $L=aG+bF$.  Then the strange duality map $SD_{2,L}$ in (\ref{sdmt}) is an isomorphism for the following cases.
\begin{enumerate}
\item $\min\{a,b\}\leq1$;
\item $\min\{a,b\}\geq2$, $e\neq 1$, $L$ ample;
\item $\min\{a,b\}\geq2$, $e=1$, $b\geq a+[a/2]$ with $[a/2]$ the integral part of $a/2$.
\end{enumerate}
\end{coro}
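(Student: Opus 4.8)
The plan is to deduce the corollary from Theorem \ref{maino} and Theorem \ref{ruled} by an induction on the quantity $b - ae$ (for $e \geq 2$), or on $b$ (for $e \leq 1$), that measures "how ample" $L$ is, using the fact that tensoring by $K_X = -2G - (e+2)F$ decreases this quantity. First I would dispose of case (1): when $\min\{a,b\} \leq 1$, Theorem \ref{ruled}(1) gives $\ca$, and one checks directly that either $D_{\z_L} = \emptyset$ (so Corollary \ref{empty} applies via Remark \ref{gzero}) or one is in a base case where $L \otimes K_X$ fails to be effective, so that $H^0(\lambda_2(L \otimes K_X)) = 0$ and $SD_{2,L}$ is forced to be an isomorphism by the diagram (\ref{sddim}). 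More precisely, once $L \otimes K_X$ is not effective, Lemma \ref{dnempty} forces $D_{\z_L} = \emptyset$, and then the left column of (\ref{sddim}) collapses.

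For the inductive step in cases (2) and (3), suppose $\min\{a,b\} \geq 2$ and $L$ is ample (with the extra hypothesis $b \geq a + [a/2]$ when $e = 1$). The strategy is: if $L \otimes K_X$ is again "of the same type" — i.e. satisfies the hypotheses of the corollary for a smaller value of the induction parameter — then $SD_{2, L\otimes K_X}$ is an isomorphism by induction, and Theorem \ref{ruled}(2) or (3) supplies $\ca$ together with $\cb$ or $\cb'$ for $L$, so Theorem \ref{maino} promotes this to $SD_{2,L}$ being an isomorphism. If instead $L \otimes K_X$ is no longer effective, or drops below the threshold, the induction terminates: we are back in the situation handled in case (1), where $\beta_D$ is either an isomorphism onto a zero space or $D_{\z_L} = \emptyset$, or in the boundary case $L \cong K_X^{-1}$ covered by Remark \ref{lek} (note $K_X^{-1} = 2G + (e+2)F$ is ample exactly when $e \leq 1$, matching the excluded/special values). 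The bookkeeping is: for $e \geq 2$, $L = aG + bF$ ample means $b > ae$; then $L \otimes K_X = (a-2)G + (b - e - 2)F$, and $b - e - 2 > (a-2)e$ iff $b > ae$, so ampleness is preserved until $a$ drops below $2$, i.e. until $\min\{a,b\} \leq 1$. For $e = 1$ the refined inequality $b \geq a + [a/2]$ is designed precisely so that it is stable under $\otimes K_X$ until $a < 2$: one has $L \otimes K_X = (a-2)G + (b-3)F$ and must check $b - 3 \geq (a-2) + [(a-2)/2] = a + [a/2] - 3$, which holds.

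The main obstacle I anticipate is not the induction itself — which is essentially automatic once the inequalities are verified to be $\otimes K_X$-stable — but rather ensuring that the hypotheses of Theorem \ref{ruled} are met \emph{at every stage} of the induction, i.e. that $L$, $L \otimes K_X$, $L \otimes K_X^{\otimes 2}$, \dots all fall into the ranges where $\ca$ and $\cb$ or $\cb'$ have been established. In particular one must track the transition between the regime $\min\{a,b\} \geq 4$ (where Theorem \ref{ruled}(3) gives $\cb$ and needs both $L$ and $L \otimes K_X$ ample) and the regime $2 \leq \min\{a,b\} \leq 3$ (where Theorem \ref{ruled}(2) gives only $\cb'$, needing just $L$ ample) and finally into $\min\{a,b\} \leq 1$; at each transition the applicable version of Condition \cb\ changes, so the verification that Theorem \ref{maino} still applies must be done case by case on the value of $\min\{a-2k, b-k(e+2)\}$. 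Once these finitely many boundary checks are in place, the corollary follows by assembling Theorems \ref{maino} and \ref{ruled} along the chain $L \rightsquigarrow L \otimes K_X \rightsquigarrow \cdots$ down to a base case, and invoking Corollary \ref{empty} or Remark \ref{lek} at the bottom.
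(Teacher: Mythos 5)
Your proposal is correct and follows essentially the same route as the paper: case (1) via Corollary \ref{empty} and Remark \ref{gzero}, and cases (2)--(3) by descending $L \rightsquigarrow L\otimes K_X \rightsquigarrow \cdots$ through Theorem \ref{maino} (with $\ca$, $\cb$ or $\cb'$ supplied by Theorem \ref{ruled} at each stage) until $\min\{a,b\}\leq 1$. The only cosmetic slip is the claim that $b-e-2>(a-2)e$ iff $b>ae$ (it is iff $b>ae-e+2$), but the implication actually needed holds for $e\geq 2$, so the descent bookkeeping goes through as in the paper.
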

\begin{proof}If $\min\{a,b\}\leq1$,  then every curve in $\ls$ contains no subcurve of positive genus and hence done by Corollary \ref{empty} and Remark \ref{zero}.  

If $\min\{a,b\}\geq2$ and $e\neq1$, then $L$ is ample $\Rightarrow$ $L\otimes K_X$ is ample.   Therefore by Theorem \ref{ruled} and Theorem \ref{maino} we can reduce the problem to $L=G+nF$ (or $F+nG$ for $e=0$), or $nF$ (or $mG$ for $e=0$) while by Corollary \ref{empty} and Remark \ref{zero}, $SD_{2,L}$ is an isomorphism in these cases.   

If $\min\{a,b\}\geq2$, $e=1$ and $b\geq a+[a/2]$, then either both $L$ and $L\otimes K_X$ are ample or $L$ ample and $L\otimes K_X=G+F~or~nF$.  Therefore analogously we are done by Theorem \ref{ruled}, Theorem \ref{maino}, Corollary \ref{empty} and Remark \ref{zero}. 

The corollary is proved.
\end{proof}

To prove Theorem \ref{ruled}, the main task is estimating codimension of some schemes.  However we want to use stack language as what we did in \cite{Yuan4} because it makes the argument clearer and simpler.  Therefore, we firstly introduce some stacks as follows, the notations of which are slightly different from \cite{Yuan4}.
\begin{defn}\label{ff}Let $\chi$ and $d$ be two integers.
 
(1) Let $\mm^d(L,\chi)$ be the (Artin) stack parametrizing pure 1-dimensional sheaves $\mf$ on $X$ with determinant $L$, Euler characteristic $\chi(\mf)=\chi$ and satisfying 
either $\mf$ is semistable or  $\forall \mf'\subset \mf$, $\chi(\mf')\leq d$.
 
(2) Let $\mm(L,\chi)$ ($\mm(L,\chi)^s$, resp.) be the substack of $\mm^d(L,\chi)$ parametrizing semistable (stable, resp.) sheaves in $\mm^d(L,\chi)$.

(3) Let $\mm^{int}(L,\chi)$ be the substack of $\mm(L,\chi)^s$ parametrizing sheaves with integral supports.

(4) Let $\mm^{d,R}(L,\chi)$ be the substack of $\mm^d(L,\chi)$ parametrizing sheaves with reducible supports in $\mm^d(L,\chi)$.  
Let $\mm^{R}(L,\chi)=\mm^{d,R}(L,\chi)\cap\mm(L,\chi)^s$.

(5) Let $\mm^{d,N}(L,\chi)$ be the substack of $\mm^d(L,\chi)$ parametrizing sheaves with irreducible and non-reduced supports in $\mm^d(L,\chi)$.
Let $\mm^{N}(L,\chi)=\mm^{d,N}(L,\chi)\cap\mm(L,\chi)^s$.

(6) Let $\mc^d(nL',\chi)$ $(n>1)$ be the substack of $\mm^d(nL',\chi)$ parametrizing sheaves $\mf$ whose supports are of the form $nC$ with $C$ an integral curve in $|L'|$.  $\mc(nL',\chi)=\mc^d(nL',\chi)\cap\mm(L,\chi)^s$.
\end{defn}
\begin{lemma}\label{codim}Let $X=\Sigma_e$ and $L=aG+bF$ ample with $\min\{a,b\}\geq2$. 
Then for all $\chi$ and $d$, $\mm^{int}(L,\chi)$ is smooth of dimension $L^2$,
and the complement of $\mm^{int}(L,\chi)$ inside $\mm^d(L,\chi)$ is of codimension $\geq2$, i.e. of dimension $\leq L^2-2$.
\end{lemma}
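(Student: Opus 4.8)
The plan is to break the statement into three separate assertions and prove each by a dimension count on the stacks introduced in Definition \ref{ff}. First, the smoothness and dimension claim for $\mm^{int}(L,\chi)$: since the support $C$ of any sheaf parametrized is an integral curve in $|L|$, one has $\Ext^2(\mf,\mf)\cong\Hom(\mf,\mf\otimes K_X)^\vee$, and for $\mf$ stable with integral support this vanishes once $\deg(K_X|_C)<0$, which holds since $L$ ample and $X$ rational forces $K_X.L<0$. Hence $\mm^{int}(L,\chi)$ is smooth of dimension $\text{ext}^1(\mf,\mf)-\text{hom}(\mf,\mf)=-\chi(\mf,\mf)=L^2$ (the last equality by Riemann--Roch for the quadratic form $\<\,,\>$; for a stable sheaf $\text{hom}(\mf,\mf)=1$ and the stack dimension picks up an extra $-1$ from the automorphisms, so the two cancel correctly).

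**The complement.** The bulk of the work is bounding $\dim\big(\mm^d(L,\chi)\setminus\mm^{int}(L,\chi)\big)\leq L^2-2$. I would stratify the complement according to the support geometry: (i) sheaves that are not stable (the strictly semistable or, in $\mm^d$, merely ``$\chi(\mf')\leq d$'' locus); (ii) $\mm^{R}(L,\chi)$ and $\mm^{d,R}(L,\chi)$, supports that are reducible; (iii) $\mm^{N}(L,\chi)$ and $\mm^{d,N}(L,\chi)$, supports irreducible but non-reduced; and within (iii) the sub-locus $\mc^d(nL',\chi)$ of $n$-thickenings of an integral curve. For each stratum I would estimate its dimension by viewing a sheaf on a reducible or non-reduced curve as glued from sheaves on the components (using the standard exact sequences $0\to\mf_1\to\mf\to\mf_2\to 0$ splitting off a subcurve), bounding $\dim$ of the space of such $\mf$ by $\dim$(choices of the two component sheaves) $+$ $\dim\Ext^1$(gluing data) $-$ (automorphisms), and comparing with $L^2=(L_1+L_2)^2=L_1^2+L_2^2+2L_1.L_2$; the cross term $2L_1.L_2$ is strictly positive and large (here one uses $\min\{a,b\}\geq 2$ and ampleness to make $L_i.L_j$ big enough), which is exactly what produces the codimension $\geq 2$ gap. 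The non-reduced strata are handled similarly, filtering $\mf$ by the subsheaves supported on the reduced curve $C_{red}$ and using that the ``remaining'' sheaves live on copies of $C_{red}$ of smaller degree.

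**Main obstacle.** The hard part will be making the dimension estimates for the reducible and non-reduced strata \emph{uniform} in $\chi$ and $d$ and tight enough to get codimension exactly $\geq 2$ rather than just $\geq 1$; the constraint $\min\{a,b\}\geq 2$ is presumably sharp precisely for this, and the case $L_1.L_2$ small (e.g.\ a component being a fiber $F$ or the negative section $G$) is where the count is most delicate and must be checked by hand. I would also need to treat carefully the locus where $\mf$ is a non-locally-free (torsion) sheaf on its support curve $C$, i.e.\ $\mf$ has a subsheaf supported on a proper subscheme; on an integral $C$ these are a proper closed subset but one must confirm they don't escape $\mm^{int}$'s smoothness. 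Since the excerpt says the technique is ``essentially a combination of those in \cite{Yuan4} and \cite{Yuan5},'' I expect the proof to proceed by citing the stack-theoretic codimension bounds from \cite{Yuan4} for the reducible/non-reduced loci and adapting the Hirzebruch-surface intersection-number bookkeeping, with the bulk of the new content being the explicit verification that $2L_1.L_2\geq 2$ plus the relevant $\Ext$-dimension defects in every boundary stratum, which I would organize as a short case analysis on the shape of the decomposition of the support curve.
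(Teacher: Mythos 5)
Your overall strategy coincides with the paper's: stratify the complement by support type (reducible vs.\ irreducible non-reduced), bound each stratum by a filtration/extension count citing the stack-dimension estimates of \cite{Yuan4}, and extract codimension $2$ from positivity of the cross terms $n_in_jL_i.L_j$. The smoothness and dimension claim for $\mm^{int}(L,\chi)$ is also argued exactly as you do, via $\Ext^2(\mf,\mf)\cong\Hom(\mf,\mf\otimes K_X)^{\vee}=0$ from $K_X.L<0$.

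There is, however, a genuine gap at precisely the point you flag but do not resolve: the strata whose support involves the negative section $G$ when $e\geq 1$. There your stated mechanism --- ``the cross term $2L_1.L_2$ is strictly positive and large'' --- simply fails, since $G^2=-e<0$ and $G$ is not nef, so a decomposition $L=\sum n_iL_i$ with some $L_i=G$ contributes negative cross terms and the naive count does not close. The paper needs a separate idea here: for any semistable $\mf$ supported on $nG$, multiplication by the section defining $G$ is zero (because $G^2<0$), so $\mf$ is an $\mo_G$-module and hence a direct sum of line bundles on $G\cong\p^1$; this makes the stack $\mc(nG,\chi)$ of dimension $\leq -n^2$, and the unstable sheaves on $nG$ are then controlled by an induction along the Harder--Narasimhan filtration with an explicit $\operatorname{ext}^2$ estimate. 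Only after isolating the $a_0G$ part this way does the reducible-support estimate (the analogue of your cross-term inequality, now over nef classes $L_i$ only, with correction terms $a_0^2+a_0G.L$) yield codimension $\geq 2$. Separately, for the non-reduced irreducible strata $\mc^d(nL',\chi)$ your ``glue along the reduced curve'' picture only produces the defect $(n-1)L'^2$, which vanishes when $L'=F$; the paper supplements it with Le Potier's argument on the loci where the fibre dimension of $\mf$ jumps, using global generation of $\mathscr{E}xt^1(\mf,\mf)$ (checked via Castelnuovo--Mumford from $(-jH+K_X).L'<0$), to get the stronger bound $\min\{7,\,-nL'.K_X-1,\,(n-1)L'^2\}$. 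Without these two ingredients the plan does not establish the codimension $2$ claim for $e\geq 1$ or for thickenings of fibers.
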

\begin{proof}
Since $L.K_X<0$, $\mm^{int}(L,\chi)$ is smooth of dimension $L^2$.
We first estimate the dimension of $\mc^d(nL',\chi)$ ($n>1$).  
Write $L'=a'G+b'F$.  Since $|L'|^{int}\neq\emptyset$, $L'=G$ or $F$; or $b'\geq a'e$, $e>0$; or $a',b'>0$, $e=0$.  

\emph{Claim $\clubsuit$.} $\forall~d,\chi$, $dim~\mc^d(nL',\chi)\leq n^2L'^2-\min\{7,-nL'.K_X-1,(n-1)L'^2\}\leq n^2L'^2$ for $L'$ nef and $dim~\mc^d(nG,\chi)\leq -n^2$ for $e>0$. 

We show Claim $\clubsuit$.  Let 
$$\mt_m(L,\chi):=\{\mf\in \mm(L,\chi) |\exists ~x\in X, such~that~dim_{k(x)}(\mf\otimes k(x))\geq m\},$$
where $k(x)$ is the residue field of $x$.
Take a very ample divisor $H=G+(e+1)F$ on $X$.  If $L'$ is nef, then $(-jH+K_X).L'<0$ for all $j\geq-1$ and hence $ H^1(\mathscr{E}xt^1(\mf,\mf)(jH))\cong\Ext^2(\mf,\mf (jH))\cong  \Hom(\mf,\mf(K_X-jH))^{\vee}=0$ for all $j\geq-1$ and $\mf \in \mc(nL',\chi)$.  Therefore by Castelnuovo-Mumford criterion $\mathscr{E}xt^1(\mf,\mf)$ is globally generated.  Hence by Le Potier's argument in the proof of Lemma 3.2 in \cite{LP1}, $\mc(nL',\chi)\cap\mt_m(nL',\chi)$ is of dimension $\leq n^2L'^2-m^2+2$.  Combining Proposition 4.1 and Theorem 4.16 in \cite{Yuan4}, we have 
\begin{equation}\label{nrdim}dim~\mc(nL',\chi)\leq n^2L'^2-\min\{7,n(n-1)L'^2,-nK_X.L'-1\}.\end{equation}  

Let $\mf\in\mc^d(nL',\chi)\setminus \mc(nL',\chi)$.  Since $\forall~\mf'\subset\mf$, $K_X.c_1(\mf')<0$, the proof of Proposition 2.7 in \cite{Yuan4} applies and $dim~(\mc^d(nL',\chi)\setminus \mc(nL',\chi))\leq n^2L'^2-(n-1)L'^2$.  

Let $e>0$.  For every semistable sheaf $\mf$ with support $nG$, the map $\mf\xrightarrow{\cdot\delta_G}\mf(G)$ is zero because $G^2<0$, where $\delta_G\in H^0(\mo_X(G))$ is a function defining the divisor $G$.  Hence $\mf$ is a sheaf on $G$ and hence a direct sum of $n$ line bundles over $G$.  Thus $dim~\mc(nG,\chi)\leq-n^2$.  Let $\mf$ be unstable with support $G$, then take the Harder-Narasimhan filtration of it as follows.
\[0=\mf_0\subsetneq \mf_1\subsetneq\cdots\subsetneq \mf_k=\mf,\]
with $\mf_i/\mf_{i-1}\cong\mo_G(s_i)^{\oplus n_i}$ such that $s_1>s_2>\cdots >s_{k}$ and $\sum_{i=1}^k n_i=n$.  Then $\text{ext}^2(\mf_i/\mf_{i-1},\mf_{i-1})=\text{hom}(\mf_{i-1},\mf_{i}/\mf_{i-1}(K_X))\leq\sum_{j<i}\text{hom}(\mo_G(s_j)^{\oplus n_j},\mo_G(s_i+(e-2))^{\oplus n_i})\leq \sum_{j<i}(e-2)n_in_j.$
By induction assumption $dim~\mc^d(\tilde{n}G,\chi)\leq-\tilde{n}^2$ for all $\tilde{n}<n$, then by analogous argument to the proof of Proposition 2.7 in \cite{Yuan4} we have 
\[\begin{array}{l}dim~\mc^d(nG,\chi)\\
\leq\displaystyle{\max_{\tiny{\begin{array}{c}n_1,\cdots,n_k>0\\ \sum_{i} n_i=n\end{array}}}}\{-n^2,-(\sum_{i=1}^{k-1}n_i)^2-n_k^2+\text{ext}^1(\mf_k/\mf_{k-1},\mf_{k-1})-\text{hom}(\mf_k/\mf_{k-1},\mf_{k-1})\}\\
=\displaystyle{\max_{\tiny{\begin{array}{c}n_1,\cdots,n_k>0\\ \sum_{i} n_i=n\end{array}}}}\{-n^2,-(\sum_{i=1}^{k-1}n_i)^2-n_k^2-\chi(\mf_i/\mf_{k-1},\mf_{k-1})+\text{ext}^2(\mf_k/\mf_{k-1},\mf_{k-1})\}\\
\leq \displaystyle{\max_{\tiny{\begin{array}{c}n_1,\cdots,n_k>0\\ \sum_{i} n_i=n\end{array}}}}\{-n^2,-(\sum_{i=1}^{k-1}n_i)^2-n_k^2-n_k(\sum_{i=1}^{k-1}n_i)e+(e-2)(\sum_{i=1}^{k-1}n_in_k)\}=-n^2 \end{array}\]
Therefore Claim $\clubsuit$ is proved.

Easy to see $\mm^d(L,\chi)\setminus\mm^{int}(L,\chi)=\mm^{d,R}(L,\chi)\cup\mm^{d,N}(L,\chi)$ and $\mm^{d,N}(L,\chi)=\cup_{nL'=L}\mc^d(nL',\chi)$. 
Claim $\clubsuit$ implies that $\mm^{d,N}(L,\chi)$ is of codimension $\geq 2$ inside $\mm^d(L,\chi)$ for $L$ ample.
Now we only need to show $\mm^{d,R}(L,\chi)$ is of dimension $\leq L^2-2$.

Let $\mg\in\mm^{d,R}(L,\chi)$, then $\mg$ admits a filtration as follows.
\[0=\mg_0\subsetneq \mg_1\subsetneq\cdots\subsetneq \mg_l=\mg,\]
with $\ms_i:=\mg_i/\mg_{i-1}\in\mc^{d_i}(n_iL_i,\chi_i)$ such that $\displaystyle{\sum_{i=1}^l} n_iL_i=L$, $\displaystyle{\sum_{i=1}^l} \chi_i=\chi$ and $\Hom(\ms_i,\ms_j)=\Ext^2(\ms_i,\ms_j)=0,~\forall~i\neq j$.  Hence
$\text{ext}^1(\ms_i,\ms_j)=-\chi(\ms_i,\ms_j)=n_in_j(L_i.L_j)$ for all $i>j$, and $\text{ext}^1(\ms_i,\mg_{i-1})=\displaystyle{\sum_{j<i}}\text{ext}^1(\ms_i,\mg_{i-1})$.
By analogous argument to the proof of Proposition 2.7 in \cite{Yuan4}, we have

\begin{equation}\label{redd}\begin{array}{l}dim~\mm^{d,R}(L,\chi)\leq \displaystyle{\max_{\sum n_iL_i=L}} \{\sum_{i}dim~\mc^{d_i}(n_iL_i,\chi_i)+\sum_{j<i}n_in_j(L_i.L_j)\}\\
\leq\displaystyle{\max_{\tiny{\begin{array}{c}\sum n_iL_i=L-a_0G\\ L_i ~nef, a_0\leq a\end{array}}} }\{\sum_{i}n^2_iL_i^2+\sum_{j<i}n_in_j(L_i.L_j)-a_0^2+a_0G.(L-a_0G)\}
\\ = \displaystyle{\max_{\tiny{\begin{array}{c}\sum n_iL_i=L-a_0G\\ L_i ~nef, a_0\leq a\end{array}}} }\{L^2-\sum_{j<i}n_in_j(L_i.L_j)-a_0^2-a_0G.L\}\\
=L^2-\displaystyle{\min_{\tiny{\begin{array}{c}\sum n_iL_i=L-a_0G\\ L_i ~nef, a_0\leq a\end{array}}} }\{\sum_{j<i}n_in_j(L_i.L_j)+a_0^2+a_0G.L\}
\end{array}\end{equation} 
If $a_0\geq 1$, then $\sum_{j<i}n_in_j(L_i.L_j)+a_0^2+a_0G.L\geq a_0^2+a_0(b-ea)\geq 2$.  If $a_0=0$ or $e=0$, then $\sum_{j<i}n_in_j(L_i.L_j)\geq 2$ since $\min\{a,b\}\geq 2$ and $L_i$ are all nef.  Hence the lemma is proved. 
\end{proof}
\begin{rem}\label{dfo}Let $d$, $\chi$ be two integers.  Claim $\clubsuit$ and (\ref{redd}) also provide an estimate of $dim~\mm^d(L,\chi)$ for all $L$ effective.  We can see that $dim~\mm^d(nG,\chi)=dim~\mc^d(nG,\chi)\leq-n^2$ for $e\neq 0$ and $dim~\mm^d(nF,\chi)\leq 0$. 

Denote by $\ls^{int}$ the open subset of $\ls$ consisting of all integral curves.  If $L$ is nef and big, i.e. $\ls^{int}\neq \emptyset$ and $L\neq F,G$, then $L.K_X<0$ and $dim~\mm^{int}(L,\chi)$ is smooth of dimension $L^2$, and moreover by Claim $\clubsuit$ and (\ref{redd}), $\mm^d(L,\chi)\setminus\mm^{int}(L,\chi)\leq L^2-1$.  Hence $dim~\mm^d(L,\chi)=dim~\mm(L,\chi)^s=L^2$ and $\mm(L,\chi)$ is irreducible of expected dimension.

If $\ls^{int}=\emptyset$, $\min\{a,b\}\geq1$ and $-K_X$ is nef, i.e. $e\leq2$; then $\mm(L,\chi)^s$ is either empty or of smooth of dimension $L^2$.

If $\ls^{int}=\emptyset$ with $\min\{a,b\}\geq1$, then $\mm^d(L,\chi)=\mm^{d,R}(L,\chi)$ and we then have 
\[dim~\mm^d(L,\chi)\leq \displaystyle{\max_{L-a_0G~nef}}\{(L-a_0G)^2+a_0G(L-a_0G)-a_0^2\}.\]
Let $\mf_L$ be stable with $C_{\mf_L}=a_0G+C'_{\mf_L}$ such that $G$ is not a component of $C'_{\mf_L}$, let $\mf^G_L$ be $\mf_L\otimes \mo_{a_0G}$ modulo its torsion.  Hence $\mf^G_L$ is a quotient of $\mf_L$ while $\mf^G_L(-C'_{\mf_L})$ is a subsheaf of $\mf_L$.  Hence by stability of $\mf_L$, $C'_{\mf_L}.G>0$ and $L-a_0G$ must be either ample or $bF$.  Hence
\begin{equation}\label{empty}dim~\mm(L,\chi)^s\leq \displaystyle{\max_{\tiny{\begin{array}{c}L-a_0G~ample\\ or~ a_0=a\end{array}}}}\{(L-a_0G)^2+a_0G(L-a_0G)-a_0^2\}.\end{equation}
\end{rem}
We can choose an atlas $\Omega^d_{L,\chi}\xrightarrow{\psi} \mm^d(L,\chi)$ with $\Omega^d_{L,\chi}$ a subscheme of some Quot-scheme.  We also can ask $\psi^{-1}(\mm(L,\chi))=:\Omega_{L,\chi}\xrightarrow{f_M} M(L,\chi)$ to be a good $PGL(V_{L,\chi})$-quotient with $M(L,\chi)$ the coarse moduli space of semistable sheaves.  Analogously we define 
$\Omega^{s}_{L,\chi}$, $\Omega^{int}_{L,\chi}$, $\Omega^{d,R}_{L,\chi}$, $\Omega^{d,N}_{L,\chi}$ etc.  If $\chi=0$, we write $\Omega^{\bullet}_L$ instead of $\Omega_{L,0}^{\bullet}$.  
Since $\psi$ is smooth, the codimension of $\Omega^{\bullet}_{L,\chi}$ inside $\Omega^d_{L,\chi}$ is the same as $\mm^{\bullet}(L,\chi)$ inside $\mm^d(L,\chi)$.  ``$\bullet$" stands for 
``$int$", ``$d,R$", ``$d,N$" etc.

Let $M^{int}(L,\chi):=\pi^{-1}(\ls^{int})$.  Then $M^{int}(L,\chi)$ is a flat family of (compactified) Jacobians over $\ls^{int}$, hence it is connected.  $\Omega_{L,\chi}^{int}=f_M^{-1}(M^{int}(L,\chi))$ and $\Omega_{L,\chi}^{int}$ is a principal $PGL(V_L)$-bundle over $M^{int}(L,\chi)$ hence also connected.

We have a corollary to Lemma \ref{codim} as follows.
\begin{coro}\label{inno}Let $X=\Sigma_e$ and $L=aG+bF$.

(1) If $\min\{a,b\}\leq1$, then $\ml\cong\ls$ and $\z_L\cong\mo_{\ls}$.

(2) If $\min\{a,b\}\geq2$ and $L$ is nef for $e\neq 1$, ample for $e=1$, then 
$M(L,0)$ is integral and normal; $M(L,0)\setminus \mli$ is of codimension $\geq2$ inside $\ml$;
and the dualizing sheaf of $\ml$ is locally free and isomorphic to $ \pi^{*}\mo_{\ls}(L.K_X)$.
Moreover $\pi_{*}\z_L\cong\mo_{\ls}$ and $R^i\pi_{*}\z_L^r=0$ for all $i,r>0$.  
\end{coro}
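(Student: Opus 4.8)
The plan is to reduce both statements to the behaviour of the support morphism $\pi:\ml\to\ls$ over the open locus $\ls^{int}$ of integral curves — where $\ml$ restricts to a family of (compactified) Jacobians — and then to propagate the conclusions across the complementary strata using Lemma \ref{codim} and Remark \ref{dfo}. For part (1), when $\min\{a,b\}\le1$ the only effective cases are $L\in\{bF,\ aG,\ G+bF,\ aG+F\}$, and in each a short intersection computation shows that every curve $C\in\ls$, whether integral or not, has $p_a(C')\le0$ for every subcurve $C'$; hence the moduli of rank-one torsion-free sheaves of Euler characteristic $0$ on $C$, up to S-equivalence, is a single reduced point, so $\pi$ is a bijection. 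Since $\ls\cong\p^N$ is smooth and the moduli stack is smooth (because $\Ext^2(\mf,\mf)=0$ for all semistable $\mf$ whenever $L.K_X<0$, while $L=aG$ with $e\ge3$ gives a single point), $\ml$ is reduced and normal, so the bijective finite morphism $\pi$ onto the normal $\ls$ is an isomorphism. Finally, for such $L$ every semistable $\mf$ of class $u_L$ has $H^i(X,\mf)=0$ for all $i$ — indeed $H^2(\mf)=\Hom(\mf,K_X)^{\vee}=0$ always, and a nonzero section would produce a quotient $\mo_{C'}$ of $\mf$ with $\chi(\mo_{C'})=1-p_a(C')\ge1$, contradicting semistability — so $R^{\bullet}p_*$ of the universal family twisted by $q^*\mo_X$ vanishes and $\z_L=\lambda_L([\mo_X])$ descends from $\mo$, whence $\z_L\cong\mo_{\ls}$.

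For part (2), with $\min\{a,b\}\ge2$ and $L$ nef (ample if $e=1$) we have $L.K_X<0$ and $L$ nef and big. Irreducibility of $\ml$ and $\dim\ml=L^2+1$ come directly from Remark \ref{dfo}. Normality holds because $\Ext^2(\mf,\mf)=0$ for every semistable $\mf$: a nonzero map $\mf\to\mf(K_X)$ would have image of slope both $\ge\mu(\mf)$ (being a quotient of the semistable $\mf$) and $\le\mu(\mf(K_X))<\mu(\mf)$ (being a subsheaf of the semistable $\mf(K_X)$). Thus the atlas $\Omega_L$ is smooth, so its good $PGL(V_L)$-quotient $\ml$ is normal; by the Luna slice theorem and Hochster--Roberts it is moreover Cohen--Macaulay. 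The statement that $\ml\setminus\mli$ has codimension $\ge2$ is Lemma \ref{codim} transported from $\mm^d(L,0)\supseteq\mm(L,0)$ to the coarse space along the $PGL(V_L)$-quotient, using $\mli=\pi^{-1}(\ls^{int})$.

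For the dualizing sheaf, I would compute it first on $\mli$: there $\pi$ is a family of compactified Jacobians of integral curves, whose fibers are Gorenstein with trivial dualizing sheaf, so $\omega_{\mli/\ls^{int}}\cong\pi^*\Lambda$ with $\Lambda\cong\det\!\big(R^1(\mathrm{pr}_{\ls})_*\mo_{\mc}\big)^{\vee}$ for the universal curve $\mc\subset X\times\ls$; writing $\mc$ as the zero scheme of the tautological section of $q^*L\boxtimes\mo_{\ls}(1)$ and using $H^i(X,\mo_X)=0$ for $i\ge1$ gives $\Lambda\cong\mo_{\ls}(p_a(C))$, hence $\omega_{\mli}\cong\pi^*\big(\mo_{\ls}(p_a(C))\otimes\omega_{\ls}\big)\cong\pi^*\mo_{\ls}(L.K_X)$, by the numerical identity $p_a(C)-\dim\ls-1=L.K_X$ (which uses $h^i(X,L)=0$ for $i>0$). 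Since $\ml$ is normal with $\ml\setminus\mli$ of codimension $\ge2$, $\omega_{\ml}$ is reflexive and agrees on the big open $\mli$ with the line bundle $\pi^*\mo_{\ls}(L.K_X)$, so the two are isomorphic on all of $\ml$; in particular $\omega_{\ml}$ is locally free.

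Finally, $\pi_*\z_L\cong\mo_{\ls}$ and $R^i\pi_*\z_L^r=0$ for $i,r>0$ follow by cohomology and base change once one has the fibrewise facts: $\z_L$ restricts on every fiber of $\pi$ to the theta line bundle $\Theta$, $h^0(\Theta)=1$, and $h^j(r\Theta)=0$ for $j>0$, $r\ge1$ — classical for the compactified Jacobians of integral curves (Gorenstein, rational singularities, $\Theta$ ample) and, over the reducible-support fibers, following from the analogous theory of theta divisors on compactified Jacobians of reducible curves, together with the flatness of $\pi$ (miracle flatness: $\ml$ Cohen--Macaulay over the regular $\ls$ with equidimensional fibers). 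I expect the two subtle points to be exactly (a) the identification and the codimension-$\ge2$ extension of the relative dualizing sheaf, where the normality and Cohen--Macaulayness of $\ml$ are essential, and (b) the fibrewise vanishing $h^{>0}(r\Theta)=0$ over the reducible-curve strata, which the codimension estimates cannot reach since $\ls\setminus\ls^{int}$ is typically a divisor in $\ls$.
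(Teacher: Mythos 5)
Your argument for part (1) is essentially a reproof of Proposition 4.1.1 in \cite{Yuan1}, which the paper simply cites, so that half is fine. For part (2), however, there is a concrete error at the very first step. You derive normality of $\ml$ from the claim that $\Ext^2(\mf,\mf)=0$ for \emph{every} semistable $\mf$ of class $u_L$, arguing via slopes of the image of a map $\mf\to\mf\otimes K_X$. This presupposes that $\mf\otimes K_X$ is again semistable, which fails for $1$-dimensional sheaves: twisting shifts the reduced Euler characteristic of a subsheaf $\mf'$ by $c_1(\mf').K_X/(c_1(\mf').H)$, a quantity depending on $c_1(\mf')$. Concretely, on $\Sigma_e$ with $e\ge 2$ and $L$ nef with $\min\{a,b\}\ge2$, the class $L-G$ is effective, and $\mf=\mo_G(-1)\oplus\mf'$ with $\mf'$ stable of Euler characteristic $0$ on an integral curve in $|L-G|$ is strictly semistable; then $\Ext^2(\mf,\mf)$ contains $\Hom(\mo_G(-1),\mo_G(-1)\otimes K_X)^{\vee}=H^0(\p^1,\mo_{\p^1}(e-2))^{\vee}\neq0$ since $K_X.G=e-2\ge0$. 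So the vanishing you assert is false in exactly the range the corollary must cover. The paper's proof is built to avoid this: it uses smoothness only on $\Omega_L^{int}$, shows via Lemma \ref{codim} that its complement has codimension $\ge 2$ so that $\Omega_L$ has the expected dimension, concludes that $\Omega_L$ is a local complete intersection (hence $S_2$) and regular in codimension one, and then applies Serre's criterion. Your appeal to Hochster--Roberts for Cohen--Macaulayness collapses for the same reason. Relatedly, ``Lemma \ref{codim} transported to the coarse space'' does not give the codimension bound for $\ml\setminus\mli$ on the strictly semistable locus, where stack and coarse-space dimensions diverge; this is why the paper carries out the separate estimates (\ref{ssemi}) and (\ref{ezero}) for $\dim(\ml\setminus\ml^s)$.

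For the final assertions, your plan requires the fibrewise vanishing $h^{>0}(r\Theta)=0$ on compactified Jacobians of \emph{reducible} curves, and you yourself flag that you cannot reach it. The paper never proves such a fibrewise statement: it deduces $\pi_{*}\z_L\cong\mo_{\ls}$ and $R^i\pi_{*}\z_L^r=0$ from the normality and integrality of $\ml$ together with the codimension of the non-integral locus, by invoking Theorem 4.3.1 in \cite{Yuan1} and Proposition 4.3 in \cite{Yuan5}; the analysis is confined to $\ls^{int}$ and extended by normality, not carried out fiber by fiber. Your dualizing-sheaf computation (triviality of the dualizing sheaf of compactified Jacobians of integral planar curves, descent of $\det R^1$, reflexive extension across codimension $\ge 2$) is a plausible alternative to the paper's citation of Proposition 4.2.11 in \cite{Yuan1}, but it too rests on the normality and codimension statements you have not established. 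As written, the proposal has a genuine gap in the normality step and an admitted one in the push-forward step.
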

\begin{proof}If $\min\{a,b\}\leq1$, then done by Proposition 4.1.1 in \cite{Yuan1}.

Let $L$ be as in (2).  There are nonsingular irreducible curves in $\ls$ and the complement of $\ls^{int}$ in $\ls$ is of codimension $\geq 2$.  Since $L.K_X<0$, $\mli$ is smooth and irreducible of dimension $L^2+1$.
$\Omega_{L}^{int}$ is also smooth, hence irreducible and of expected dimension.

By Lemma \ref{codim}, $\Omega^d_L\setminus\Omega_L^{int}$ is of codimension $\geq 2$ inside $\Omega^d_L$, then $\Omega_L^{int}$ is dense in 
$\Omega_L$, hence then $\Omega_L$ is of expected dimension and by deformation theory $\Omega_L$ is a local complete intersection.  On the other hand, $\Omega_L$
is smooth in codimension 1, hence normal for local complete intersection.  Therefore $\ml$ is integral and normal since $\Omega_L$ is.

To show that $\ml\setminus\mli$ is of codimension $\geq 2$, we only need to show $\ml\setminus\ml^s$ is of codimension $\geq 2$ with $\ml^s$ the open subset consisting of stable sheaves.  By Remark \ref{dfo}, $dim~M(L',0)^s=L'^2+1$ for $L'$ nef and big, $dim~M(F,0)^s=1$, $dim~M(nF,0)^s=0$ for $n>1$, $dim~M(nG,0)=0$ for $e>0$ and finally by (\ref{empty}) 
for $|L'|^{int}=\emptyset$ and $L'\neq nF,mG,$
$$dim~M(L',0)^s\leq\max_{\tiny{\begin{array}{c}L-a_0G~ample\\ or~ a_0=a\end{array}}}\{(L'-a_0G)^2+1-a_0^2+a_0G.(L'-a_0G)\}.$$  
Hence if $e\neq 0$, then
\begin{equation}\label{ssemi}\begin{array}{l}L^2+1-dim~(\ml\setminus\ml^s)=L^2+1-\displaystyle{\max_{\sum L_i=L}} \{\sum_{i}dim~M(L_i,0)^s\}\\
\leq L^2+1-\displaystyle{\max_{\tiny{\begin{array}{c}\sum L_i=L-a_0G\\ L'_i:=L_i-a_iG~nef,
\\ a_i\geq0,~ a_0\leq a\end{array}}} }\{\sum_{i}(L_i-a_iG)^2-a_i^2+a_iG.(L_i-a_iG)+\#\{L_i\}\}
\\ \leq\displaystyle{\min_{\tiny{\begin{array}{c}\sum L_i=L-a_0G\\ L'_i:=L_i-a_iG~nef, \\ a_i\geq0,~ a_0\leq a\end{array}}} }\{\sum_{j\neq i}(L'_i.L'_j)-\#\{L'_i\} 
+\sum_{i\neq 0} a_i^2+\sum_{i\neq0}a_iG.(L'_i+2\sum_{j\neq i}L'_j)\\ \qquad\qquad\qquad\qquad+2a_0G.L+(a^2_0-(\displaystyle{\sum_{i\neq0}}a_i)^2)e+1\}
\\ \leq\displaystyle{\min_{\tiny{\begin{array}{c}\sum L_i=L-a_0G\\ L'_i:=L_i-a_iG~nef, \\ a_i\geq0,~ a_0\leq a\end{array}}} }\{\sum_{j\neq i}(L'_i.L'_j)-\#\{L'_i\} 
+\sum_{i\neq 0} a_i^2+\sum_{i\neq0}a_iG.(L-a_0G+\sum_{j\neq i}L'_j)\\ \qquad\qquad\qquad\qquad+2a_0G.L+a^2_0e+1\}
\\ =\displaystyle{\min_{\tiny{\begin{array}{c}\sum L_i=L-a_0G\\ L'_i:=L_i-a_iG~nef, \\ a_i\geq0,~ a_0\leq a\end{array}}} }\{\sum_{j\neq i}(L'_i.L'_j)-\#\{L'_i\} 
+\sum_{i\neq 0} a_i^2+\sum_{i\neq0,j\neq i}a_iG.L'_j+(\sum_{i\geq0}a_i)G.L\\ \qquad\qquad\qquad\qquad+a_0G.L+a_0(\sum_{i\geq0}a_i)e+1\}
\end{array}\end{equation}

We want $dim~(\ml\setminus\ml^s)\leq L^2-1$.  

Assume $L'_i=n_iF$ for all $i$, then $\sum_{i\geq 0}a_i=a$.  If moreover $a_i=0$ for $i\neq0$, then $a_0=a$ and $-\#\{L'_i\}+2a_0G.L+a_0^2e+1=1+2a(b-ae)-b+a^2e=b(a-1)+a(b-ae)+1\geq 5$ since $a,b\geq 2$ and $b>ae$.  If $\exists~a_{k_0}\neq 0$ for $k_0\neq 0$, then $-\#\{L'_i\} 
+\sum_{i\neq 0} a_i^2+\sum_{i\neq0,j\neq i}a_iG.L'_j+aG.L+1\geq a(b-ae)+1\geq 3.$

Assume $\exists~ L'_{i_0}\neq nF$, then $L'_{i_0}.L'_j\geq1$ for $L'_j$ nef hence $\sum_{j\neq i}(L'_i.L'_j)\geq 2(\#\{L'_i\}-1)$.  If $a_0\geq1$, then $2a_0G.L+a^2_0e\geq 3$ and hence    
 $\sum_{j\neq i}(L'_i.L'_j)-\#\{L'_i\}+2a_0G.L+a^2_0e+1\geq3$.  If $a_0=0$, then $\#\{L'_i\}\geq 2$ and either $\exists~ L'_{i_0},L'_{j_0}$, such that $L'_{i_0}.L_j\geq 1,~L'_{j_0}.L'_j\geq1$ for $L'_j$ nef; or $\exists ~L'_{i_0},$ such that $L'_{i_0}.L'_j\geq2$ for $L'_j$ nef; or $\exists~a_{k_0}\neq 0$ for $k_0\neq 0$.  Then we have
$$\sum_{j\neq i}(L'_i.L'_j)+\sum_{i\neq 0} a_i^2+\sum_{i\neq0,j\neq i}a_iG.L'_j+(\sum_{i\geq0}a_i)G.L\geq 2(\#\{L'_i\}-1)+2$$ and 
$$\sum_{j\neq i}(L'_i.L'_j)-\#\{L'_i\}+\sum_{i\neq 0} a_i^2+\sum_{i\neq0,j\neq i}a_iG.L'_j+(\sum_{i\geq0}a_i)G.L+1\geq \#\{L_i\}+1\geq 3.$$  

If $e=0$, then easy to see 
\begin{eqnarray}\label{ezero}dim~(\ml\setminus\ml^s)&=&\max_{\tiny{\begin{array}{c}\sum_iL_i=L\\ L_i~nef\end{array}}}\{\sum_{i}L^2_i+\#\{L_i\}\}\nonumber\\
&\leq& L^2-\min_{\tiny{\begin{array}{c}\sum_iL_i=L\\ L_i~nef\end{array}}}\{\sum_{j\neq i}L_iL_j-\#\{L_i\}\} \leq L^2-2.\end{eqnarray}
Therefore the complement of $\ml^s$ inside $\ml$ is of codimension $\geq 3$ and hence $\ml\setminus\mli$ is of codimension $\geq2$. 
 
Because $\Omega_L\setminus\Omega_L^{int}$ is of codimension $\geq2$ and $\ls$ contains smooth curves, sheaves not locally free on their supports form a subset of codimension $\geq 2$ inside $\Omega_L$, hence Proposition 4.2.11 in \cite{Yuan1} applies and then the dualizing sheaf of $\ml$ is isomorphic to $\pi^{*}\mo_{\ls}(L.K_X)$.
Moreover since $\ml$ is normal and integral, and the complement of $\ls^{int}$ inside $\ls$ is of codimension $\geq2$, Theorem 4.3.1 in \cite{Yuan1} and Proposition 4.3 in \cite{Yuan5} apply and we obtain that $\pi_{*}\z_L\cong\mo_{\ls}$ and $R^i\pi_{*}\z_L^r=0$ for all $i,r>0$.  

The lemma is proved.
 \end{proof}
 \begin{rem}\label{com}Let $L$ be as in Corollary \ref{inno}.  Since $\pi_{*}\z_L\cong\mo_{\ls}$ and $R^i\pi_{*}\z_L^r=0$ for all $i,r>0$, $H^i(\z_L(n))=0$ for all $i>0$ and $n\geq 0$.  Hence we already know that the map $g_L$ in (\ref{sddim}) is surjective in this case.  
 \end{rem}
\begin{proof}[Proof of Statement (1) of Theorem \ref{ruled}]By Corollary \ref{inno}, the strange duality map $SD_{c_2^1,u_L}$ in (\ref{rank1}) is a map between two vector spaces of same dimension, while $L$ is in case (1) of the theorem.  The argument proving Corollary 4.3.2 in \cite{Yuan1} applies and hence $SD_{c_2^1,u_L}$ is an isomorphism.  Statement (1) is proved.
\end{proof}

To prove Statement (2) and (3), we need to introduce more stacks.
\begin{defn}For two integers $k>0$ and $i$, we define $\mm_{k,i}^{int}(L,\chi)$ to be the (locally closed) substack of $\mm^{int}(L,\chi)$ parametrizing sheaves $\mf\in\mm^{int}(L,\chi)$ such that $h^1(\mf(-iK_X))=k$ and $h^1(\mf(-nK_X))=0,~\forall~ n>i.$  Let $M^{int}_{k,i}(L,\chi)$ be the image of $\mm_{k,i}^{int}(L,\chi)$ in $M^{int}(L,\chi)$.

Define $\mw_{k,i}^{int}(L,\chi)$ to be the (locally closed) substack of $\mm^{int}(L,\chi)$ parametrizing sheaves $\mf\in\mm^{int}(L,\chi)$ with  $h^0(\mf(-iK_X))=k$ and $h^0(\mf(-nK_X))=0,~\forall ~n<i$.  Let $W^{int}_{k,i}(L,\chi)$ be the image of $\mw_{k,i}^{int}(L,\chi)$ in $M^{int}(L,\chi)$.
\end{defn}
\begin{rem}\label{bound}Since $L.K_X<0$, for fixed $\chi$, $\mm_{k,i}^{int}(L,\chi)$ is empty except for finitely many pairs $(k,i)$.  We don't define $\mm^d_{k,i}(L,\chi)\subset \mm^d(L,\chi)$ because $L$ may not be $K_X$-negative (see Definition 2.1 in \cite{Yuan4}) and the analogous definition may not behave well. 
\end{rem}
\begin{rem}\label{duke}By sending each sheaf $\mf$ to its dual $\E xt^1(\mf,K_X)$, we get an isomorphism $\mm_{k,i}^{int}(L,\chi)\xrightarrow{\cong}\mw^{int}_{k,-i}(L,-\chi)$.
\end{rem}
By Proposition 5.5 and Remark 5.6 in \cite{Yuan4}, we have
\begin{prop}\label{dnki}$dim~\mm_{k,i}^{int}(L,\chi)\leq L^2+iK_X.L-\chi-k$ and $dim~\mw_{k,i}^{int}(L,0)\leq L^2-iK_X.L+\chi-k$.  Hence $dim~M_{k,i}^{int}(L,0)\leq L^2+1+iK_X.L-\chi-k$ and $dim~W_{k,i}^{int}(L,0)\leq L^2+1-iK_X.L+\chi-k$.
\end{prop}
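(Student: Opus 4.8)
The plan is to reduce all four inequalities to the single stack estimate
$$\dim \mm_{k,i}^{int}(L,\chi)\leq L^2+iK_X.L-\chi-k .$$
Granting this, the bound for $\mw_{k,i}^{int}$ is obtained by transporting along the isomorphism $\mm_{k,i}^{int}(L,\chi)\xrightarrow{\ \cong\ }\mw_{k,-i}^{int}(L,-\chi)$, $\mf\mapsto\E xt^1(\mf,K_X)$, of Remark \ref{duke}: dualizing sends $\mf(-iK_X)$ to $\E xt^1(\mf,K_X)(iK_X)$ and reverses the ordering of the twists, so $h^1(\mf(-iK_X))=h^0(\E xt^1(\mf,K_X)(iK_X))$, and the $h^1$-conditions cutting out $\mm_{k,i}^{int}$ become exactly the $h^0$-conditions cutting out $\mw_{k,-i}^{int}$; replacing $i$ by $-i$ and $\chi$ by $-\chi$ in the displayed inequality then gives $\dim\mw_{k,i}^{int}(L,0)\leq L^2-iK_X.L+\chi-k$. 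Finally, a $1$-dimensional sheaf with integral support is stable with automorphism group $\mathbb{G}_m$, so the stacks $\mm_{k,i}^{int}$ and $\mw_{k,i}^{int}$ are $\mathbb{G}_m$-gerbes over their images $M_{k,i}^{int},W_{k,i}^{int}\subset M^{int}(L,\chi)$; hence $\dim M_{k,i}^{int}=\dim\mm_{k,i}^{int}+1$ and $\dim W_{k,i}^{int}=\dim\mw_{k,i}^{int}+1$, which yields the last two inequalities.

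It remains to establish the stack estimate itself, which is the content of Proposition 5.5 and Remark 5.6 of \cite{Yuan4}; the argument I would follow there runs as follows. By Lemma \ref{codim} the stack $\mm^{int}(L,\chi)$ is smooth of dimension $L^2$, and over an atlas it carries a universal sheaf $\mathscr{F}$. Since every twist $\mf(-nK_X)$ of a $1$-dimensional sheaf on $X$ is torsion and $K_X$ is a line bundle, $H^2(X,\mf(-nK_X))=0$, so each $R\pi_*(\mathscr{F}\otimes q^*K_X^{-n})$ is represented by a two-term complex of locally free sheaves. Using the restriction sequences $0\to\mf(-(n-1)K_X)\to\mf(-nK_X)\to\mf(-nK_X)|_{D}\to 0$ for $D\in|-K_X|$, the conditions $h^1(\mf(-iK_X))=k$ and $h^1(\mf(-nK_X))=0$ for $n>i$ realize $\mm_{k,i}^{int}(L,\chi)$ as a degeneracy locus inside $\mm^{int}(L,\chi)$; peeling off the $i$ anticanonical restrictions accounts for the term $-iK_X.L$ in the codimension and the rank-$k$ drop of the relevant map for the term $-k$, while the uniformity of the estimate over all integral members of $|L|$ is forced by the global generation of $\E xt^1(\mf,\mf)$, exactly as in the Le Potier-type argument already quoted in the proof of Lemma \ref{codim}.

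The one genuinely delicate point is this last codimension estimate — controlling how far the cohomology of $\mf(-nK_X)$ can jump, uniformly in $\mf\in\mm^{int}(L,\chi)$, rather than curve by curve where no Brill--Noether genericity is available — and it is precisely Proposition 5.5 and Remark 5.6 of \cite{Yuan4}; everything else above is formal. I would therefore present the proof as: invoke those two results for the $\mm$-bound, transport it by Remark \ref{duke} to obtain the $\mw$-bound, and add $1$ to pass to the coarse moduli spaces.
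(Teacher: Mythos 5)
Your proposal matches the paper's treatment: the paper offers no independent proof of this proposition, deriving it exactly as you do from Proposition 5.5 and Remark 5.6 of \cite{Yuan4} for the key stack estimate, with the $\mw$-bound obtained via the duality of Remark \ref{duke} and the coarse-space bounds by adding $1$ for the $\mathbb{G}_m$-gerbe. Your sketch of how the argument inside \cite{Yuan4} might run is speculative, but you correctly flag it as such and it is not load-bearing, since the cited results are what both you and the paper actually invoke.
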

\begin{coro}\label{coddz}Let $X=\Sigma_e$ and $L=aG+bF$ ample with $\min\{a,b\}\geq2$.  
Let $D_{\z_L}^{int}:=D_{\z_L}\cap\mli$.  Then $dim~D_{\z_L}\setminus D_{\z_L}^{int}\leq L^2-2$, and $dim~\cd_{\z_L}\setminus \cd_{\z_L}^{int}\leq L^2-3$ with $\cd_{\z_L}$ ($\cd_{\z_L}^{int}$, resp.) the preimage of $D_{\z_L}$ ($D_{\z_L}^{int}$, resp.) inside $\mm(L,0)$.
\end{coro}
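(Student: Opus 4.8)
The plan is to bound the dimensions of the ``bad loci'' inside $D_{\z_L}$ (and inside $\cd_{\z_L}$) by relating sheaves in $D_{\z_L}$ to sheaves of class $u_{L\otimes K_X}$ via the extension sequence \eqref{dd}, then apply the dimension estimates of Lemma \ref{codim} and Proposition \ref{dnki}. Recall that $\mf_L\in D_{\z_L}$ means $h^1(\mf_L)=\mathrm{ext}^1(\mf_L,K_X)\neq 0$, so there is a nonsplit extension $0\to K_X\to \widetilde{I}\to\mf_L\to 0$; whenever $\widetilde I$ is torsion-free it is of the form $\mi_Z(L\otimes K_X)$, and its quotient by $\mo_X$ recovers a sheaf of class $u_{L\otimes K_X}$. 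Dually, $D_{\z_L}$ is governed by the space $H^0(\mf_{L\otimes K_X}(K_X))$ on the $M(L\otimes K_X,0)$ side, so I will mainly work with the stratification of $M(L\otimes K_X,0)$ and $\mm(L\otimes K_X,0)$ by the strata $W^{int}_{k,i}$ and $M^{int}_{k,i}$ (and their non-integral complements) introduced before Proposition \ref{dnki}. Set $M:=L\otimes K_X$ for brevity in this sketch.

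First I would decompose $D_{\z_L}\setminus D_{\z_L}^{int}$ according to the support: either the support is non-integral (reducible or non-reduced), or it is integral but $\mf_L$ fails to be locally free on it. For the non-integral part, I use that such $\mf_L$ come (via \eqref{dd}) from sheaves whose support is non-integral, and Lemma \ref{codim} applied to $M$ (which is ample when $\min\{a,b\}\geq2$ and $L$ is ample, at least in the cases of Theorem \ref{ruled} where this corollary is invoked) tells us $\mm^d(M,0)\setminus\mm^{int}(M,0)$ has codimension $\geq 2$; translating back through the extension and remembering the $-M.K_X$-dimensional family of extensions and the $\ell$-dimensional choice of $Z$, one checks the count matches and the non-integral locus in $D_{\z_L}$ has dimension $\leq L^2-2$. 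For the ``integral support but not locally free'' part, I would combine the estimate $\dim M^{int}(M,0)=M^2+1$ with the codimension-$\geq2$ estimate for non-locally-free sheaves on a fixed integral curve (these live in lower strata of the compactified Jacobian), and again push forward along the extension correspondence; this should again land at $\leq L^2-2$. The statement for $\cd_{\z_L}$ versus $\cd_{\z_L}^{int}$ then follows by the same analysis carried out one level up on the stacks/atlases $\Omega^\bullet$, where passing from $M(L,0)$ to $\mm(L,0)$ costs $\dim PGL(V_L)$ uniformly, except that the integral-support open stratum of $\ml$ contributes one extra dimension relative to a generic fibre, which is exactly why the bound improves to $L^2-3$.

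The key bookkeeping step — and the place I expect the main difficulty — is making the extension correspondence $\{(\mf_L, \text{extension data})\}\leftrightarrow\{(\mi_Z, \mf_M, \text{extension data})\}$ into an honest dimension comparison: one must track that the fibre of $\mf_L\mapsto \mf_M$ (equivalently the choice of $Z\subset \mathrm{Supp}$ of length $\ell$ together with the projectivized $\mathrm{Ext}^1(\mf_M,\mo_X)$) has the expected dimension $-(L+K_X).K_X + \ell$ generically, but can jump over the bad strata; bounding these jumps is where Proposition \ref{dnki}'s strata $W^{int}_{k,i}(M,0)$ with their penalty $-k$ are essential, since $h^0(\mf_M(K_X))=k$ is precisely the quantity controlling the extra extensions. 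I would organize this as: (i) stratify $\mm(M,0)$ (and $M(M,0)$) by $(k,i)$ and by support type; (ii) for each stratum give an upper bound on the corresponding locus in $D_{\z_L}$ by adding the stratum dimension, the penalty $-k$, and the extension/colength contributions; (iii) verify in each case that the total is $\leq L^2-2$ (resp. $\leq L^2-3$ upstairs), with the worst cases being the codimension-one strata of $M(M,0)$, handled using the hypotheses $\min\{a,b\}\geq2$ and $L$ ample exactly as in the final paragraph of the proof of Corollary \ref{inno}. The remaining steps are routine intersection-number inequalities on $\Sigma_e$ of the type already appearing in \eqref{redd} and \eqref{ssemi}, so I will not grind through them here.
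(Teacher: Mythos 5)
Your overall strategy diverges from the paper's and has two genuine gaps. First, the paper does not transfer the problem to $M(L\otimes K_X,0)$ via the extension $0\to K_X\to\widetilde{I}\to\mf_L\to 0$; it works directly with the decomposition of the support of $\mf_L$ inside $\mm(L,0)$. Your route cannot work as stated because Lemma \ref{codim} applied to $L\otimes K_X=(a-2)G+(b-e-2)F$ requires that class to be ample with both coefficients at least $2$, which fails precisely in the cases $2\leq\min\{a,b\}\leq3$ where the corollary is invoked (there $L\otimes K_X$ is $nF$ or $G+nF$, so it is not ample, contrary to your parenthetical claim). In addition, your decomposition of $D_{\z_L}\setminus D_{\z_L}^{int}$ is off: $\mli$ contains every stable sheaf with integral support, locally free on it or not, so your stratum ``integral support but not locally free'' lies inside $D_{\z_L}^{int}$ and is not part of what must be bounded; the complement consists only of the strictly semistable locus (already controlled by the bound $\dim(\ml\setminus\ml^s)\leq L^2-2$ from Corollary \ref{inno}) together with the non-integral-support locus.

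Second, and more seriously, the estimates you invoke stop one dimension short of the target. Lemma \ref{codim}, via (\ref{nrdim}) and (\ref{redd}), gives $\dim(\mm^d(L,0)\setminus\mm^{int}(L,0))\leq L^2-2$, whereas the corollary needs $\leq L^2-3$ for the part lying in $\cd_{\z_L}$; the entire content of the proof is extracting that extra $-1$ from the condition $h^0(\mf_L)\neq0$. The paper isolates the only two support decompositions for which (\ref{redd}) gives exactly $L^2-2$, namely $\mc_{2G+(b-1)F,~F}$ and $\mc_{(a-1)G+(ae+1)F,~G}$, and gains the missing dimension by two separate mechanisms: the $-k$ penalty of Proposition \ref{dnki} applied to the strata $\mw^{int}_{k,i}$ where the subsheaf $\mf_1$ supported on the big component has a section, and, when it does not, the explicit structure $0\to\mo_{C_{\mf}}\to\mf\to\mo_Z\to0$ with $Z$ of length $g_L-1$, whose dimension count hinges on Iarrobino's theorem that $\dim C^{[n]}\leq n$ for a curve with planar singularities. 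Neither the identification of these critical strata nor the Iarrobino input appears in your sketch, and they are not ``routine intersection-number inequalities''; without them the argument does not close.
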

\begin{proof}
We have shown that $\ml\setminus\ml^s$ is of dimension $\leq L^2-2$.  Then we only need to show $dim~(\cd_{\z_L}\setminus \cd_{\z_L}^{int})\leq L^2-3$.  Let $\mc_{L_1,~L_2}$ with $L_1+L_2=L$ be the stack parametring sheaves $\mf\in \cd_{\z_L}$ with supports $C_{\mf}=C_{L_1}+C_{L_2}$ such that $C_{L_i}\in |L_i|^{int}$ for $i=1,2$.  By (\ref{nrdim}) and (\ref{redd}), we only need to show the stacks $\mc_{2G+(b-1)F,~F}$ and $\mc_{(a-1)G+(ae+1)F,~G}$ is of dimension $\leq L^2-3$.

Let $\mf\in\mc_{2G+(b-1)F,~F}$.  Then we have the following exact sequence
\begin{equation}\label{ae2}0\ra\mf_1\ra\mf\ra\mf_2\ra0,\end{equation}
where $\mf_2$ is the torsion free part of $\mf\otimes\mo_{C_F}$ and $\mf_1\in\mm^{int}(2G+(b-1)F,\chi_1)$ with $\chi_1\leq 0$.  Notice that $\mf_1\otimes\mo_X(F)$ is a quotient of $\mf$, hence $\chi_1+2\geq 0$.  Also $\mf_2\otimes\mo_X(-2G-(b-1)F)$ is a subsheaf of $\mf$ and hence $\mf_2\cong\mo_{\p^1}$ or $\mo_{\p^1}(-1)$.  Let $\mc_{2G+(b-1)F,~F}^0\subset \mc_{2G+(b-1)F,~F}$ consist of $\mf$ in (\ref{ae2}) with $H^0(\mf_1)=0$.  $\mf_1\in\bigcup_{i\leq0}\mw^{int}_{k,i}(2G+(b-1)F,\chi_1)$ if $\mf\in\mc_{2G+(b-1)F,~F}\setminus \mc^0_{2G+(b-1)F,~F}$.  Therefore
\begin{equation}\begin{array}{l}dim ~\mc_{2G+(b-1)F,~F}\setminus \mc^0_{2G+(b-1)F,~F}\\
\leq (2G+(b-1)F).F+dim~\displaystyle{\bigcup_{i\leq0}}~\mw^{int}_{k,i}(2G+(b-1)F,\chi_1)\\
\leq (2G+(b-1)F)^2-1+\chi_1+2\leq4b-4e-3=L^2-3.\end{array}\end{equation}

Denote by $g_L$ the arithmetic genus of curves in $\ls$.  If $\mf\in \mc^0_{2G+(b-1)F,F}$, then there is a injection $\mo_{C_{\mf}}\hookrightarrow\mf$ with cokernel $\mo_{Z_{\mf}}$, where $Z_{\mf}$ is a 0-dimensional subscheme of $C_{\mf}$ with length $g_L-1$.  We have $\text{ext}^1(\mo_{Z},\mo_{C})=dim~Aut(\mo_{Z})=h^0(\mo_Z)=g_L-1$ for all $Z\subset C$.  Hence for a fixed curve $C$ and $[Z]\in C^{[g_L-1]}$, there are finitely many possible choices for $\mf$ lying in the following sequence
\[0\ra\mo_C\ra\mf\ra\mo_Z\ra0.\]
Hence the fiber of the projection $\mc^0_{2G+(b-1)F,F}\ra |2G+(b-1)F|\times|F|$ over a curve $C$ is of dimension no larger than 
$$dim~C^{[g_L-1]}+\text{ext}^1(\mo_Z,\mo_C)-dim~Aut(\mo_{C})\times Aut(\mo_Z)=dim~C^{[g_L-1]}-1.$$  
Therefore
\[\begin{array}{l}dim~\mc^0_{2G+(b-1)F,F}\\
\leq dim~|2G+(b-1)F|\times|F|-1+\displaystyle{\max_{\mf\in\mc^0_{2G+(b-1)F,F}}}dim~C_{\mf}^{[g_L-1]}\\
=3b-3e-1+\displaystyle{\max_{\mf\in\mc^0_{2G+(b-1)F,F}}}dim~C_{\mf}^{[g_L-1]}\\
=4b-4e-3+(\displaystyle{\max_{\mf\in\mc^0_{2G+(b-1)F,F}}}dim~C_{\mf}^{[g_L-1]}-(g_L-1))\\
=L^2-3+(\displaystyle{\max_{\mf\in\mc^0_{2G+(b-1)F,F}}}dim~C_{\mf}^{[g_L-1]}-(g_L-1)).\end{array}\]
The only thing left to prove is $dim~C_{\mf}^{[g_L-1]}\leq g_L-1$ for all $C_{\mf}$, and this follows from that $C_{\mf}$ only have isolated planner singularities and the result of Iarrobino (Corollary 2 in \cite{Ia}).  

Analogously we can show that $dim~\mc_{(a-1)G+(ae+1)F,~G}\leq L^2-3$.  
The corollary is proved. 
 \end{proof}

\begin{proof}[Proof of Statement (2) and (3) of Theorem \ref{ruled}]  The proof has 7 steps and we check all conditions in $\cb$ and $\cb'$ one by one as follows. 

\emph{Step 1: $\cb$-(1).}  

Since $\ml$ is integral and $D_{\z_L}$ is a divisor on it, to show $\cb$-(1) it is enough to show $dim~(D_{\z_L}\setminus D_{\z_L}^o)\leq L^2-1.$  By Corollary \ref{coddz}, it is enough to show $dim~(D^{int}_{\z_L}\setminus D_{\z_L}^o)\leq L^2-1$.  By definition 
\[(D^{int}_{\z_L}\setminus D_{\z_L}^o)\subset \bigcup_{\begin{array}{c}k\geq2,i=0\\~or~i\geq1\end{array}} M_{k,i}^{int}(L,0).\]
Therefore we have $\cb$-(1) is fulfilled by Proposition \ref{dnki}.

\emph{Step 2: $\cb$-(2).}

Assume $L=aG+bF$ ample with $\min\{a,b\}\geq 4$.  
Then Lemma \ref{codim} applies to $L+K_X=(a-2)G+(b-e-2)F$ and $\mlk\setminus M^{int}(L\otimes K_X,0)$ is of codimension $\geq2$.  $\mlk$ satisfies the ``condition $S_2$ of Serre" because it is normal  by Corollary \ref{inno}.  Hence to prove $\cb$-(2) is fulfilled, it is enough to show $M^{int}(L\otimes K_X,0)\setminus\mlk^o$ is of dimension $\leq (L+K_X)^2-1$.
Since we have
\[M^{int}(L\otimes K_X,0)\setminus\mlk^{o}=\bigcup_{i\leq -1} W^{int}_{k,i}(L\otimes K_X,0),\] 
by Proposition \ref{dnki} we have \[dim~M^{int}(L\otimes K_X,0)\setminus\mlk^{o}\leq (L+K_X)^2+K_X.(L+K_X)\leq (L+K_X)^2-1.\]
Hence $\cb$-(2). 

\emph{Step 3: $\cb$-(3).}

To check that $\cb$-(3) holds, it is enough to show the following three statements.
\begin{enumerate} 
\item $dim~Q_1^o=2\ell-L.K_X=L^2$;
\item $\rho^{-1}_1(\rho_1(Q_1^o))\setminus Q_1^o$ is of dimension $\leq 2\ell-L.K_X-2=L^2-2$;
\item $H_{\ell}\setminus\rho_1(Q_1^o)$ is of dimension $\leq 2\ell-2=L^2+L.K_X-2$.
\end{enumerate}
Let $s>0,t\geq0$, and define
$$Q_1^{s,t}:=\left\{[f_1:\mi_Z(L\otimes K_X)\twoheadrightarrow \mf_L]\in Q_1 \left|h^1(\mf_L)=s,~h^1(\mf_L(-K_X))=t\right.\right\},$$
$$H_{\ell}^{L,s,t}:=\left\{\mi_Z\in H_{\ell}\left|h^1(\mi_Z(L\otimes K_X))=s-1,~h^1(\mi_Z(L))=t.\right.\right\}.$$
Then $Q_1^{s,t}=\rho_1^{-1}(H_{\ell}^{L,s,t})$, $\rho_1(Q_1^o)\subset H_{\ell}^{L,1,0}$ and $\rho^{-1}_1(\rho_1(Q_1^o))\subset Q_1^{1,0}$.  For $d$ large enough, we have the classifying map $Q_1^{s,t}\xrightarrow{\phi^{L,s,t}_{L}}\mm^d(L,0).$
In particular when $s=1$, $\phi^{L,1,t}_{L}(Q_1^{1,t})\subset\mm(L,0)$, hence $\phi^{L,1,t}_{L}(Q_1^{1,t})\subset\cd_{\z_L}$.  This is because for every $\mf\in\mm^d(L,0)$, if $h^0(\mf_L)=1$ and there is a torsion free extension of $\mf_L$ by $K_X$, then $\forall~\mf'\subsetneq \mf$, $h^0(\mf')\leq1$ and $h^1(\mf')\geq1$ hence then $\chi(\mf')\leq0$ and $\mf$ is semistable.  
The fiber of $\phi_{L}^{L,s,t}$ at $\mf_{L}$ is contained in $\Ext^1(\mf_{L},K_X)$, and hence
\[dim~Q_1^{o}=1+dim(\cd_{\z_L}\cap(\mm^{int}(L,0)\setminus\bigcup_{k\geq2,i=0~or~i>0}\mm^{int}_{k,i}(L,0)))=L^2.\]
\begin{equation}\label{count}dim~(\bigcup_{t\geq0}Q_1^{1,t})\setminus Q_1^o\leq 1+dim~((\cd_{\z_L}\setminus \cd^{int}_{\z_L})\cup\bigcup_{i>0}\mm^{int}_{k,i}(L,0))\leq L^2-2,\end{equation}
where the last inequality is because of Corollary \ref{coddz} and Proposition \ref{dnki}.

By (\ref{dd}) $Q_1^{s,t}\cong\p(p_{*}(\I_{\ell}\otimes q^*L)|_{H_{\ell}^{L,s,t}})$, where $p_{*}(\I_{\ell}\otimes q^*L)$ is a vector bundle of rank $h^0(\mi_Z(L))=t+1-L.K_X$ over $H_{\ell}^{L,s,t}$.  Hence 
\begin{equation}\label{q1h}dim~Q_1^{s,t}=dim~H_{\ell}^{L,s,t}+t-L.K_X.\end{equation}
Hence (\ref{count}) impies $dim~(\bigcup_{t\geq0}H_{\ell}^{L,1,t})\setminus\rho_1(Q_1^o)\leq L^2+L.K_X-2$.  Hence we only need to show $dim~H_{\ell}\setminus(\bigcup_{t\geq0} H_{\ell}^{L,1,t})\leq 2\ell-2$, i.e. $dim~H_{\ell}^{L,s,t}\leq 2\ell-2$ for all $s\geq2$.

$p_{*}(\I_{\ell}\otimes q^*(L\otimes K_X))$ is a vector bundle of rank $h^0(\mi_Z(L\otimes K_X))=s$ over $H_{\ell}^{L,s,t}$.  By (\ref{ddt}) $\p(p_{*}(\I_{\ell}\otimes q^*(L\otimes K_X))|_{H_{\ell}^{L,s,t}})$ is a locally closed subscheme inside $Q_2$.  For $d$ big enough, there is a classifying map 
\[\p(p_{*}(\I_{\ell}\otimes q^*(L\otimes K_X))|_{H_{\ell}^{L,s,t}})\xrightarrow{\phi^{L,s,t}_{L\otimes K_X}}\mm^d(L\otimes K_X,0).\]  
If $s\geq 2$, then the image of $\phi_{L\otimes K_X}^{L,s,t}$ is contained in 
$$(\mm^d(L\otimes K_X,0)\setminus\mm^{int}(L\otimes K_X,0))\cup\bigcup_{\begin{array}{c}i=0,k=s-1\\or~i<0\end{array}}\mw^{int}_{k,i}(L\otimes K_X,0).$$   
The fiber of $\phi_{L\otimes K_X}^{L,s,t}$ at $\mf_{L\otimes K_X}$ is contained in $\Ext^1(\mf_{L\otimes K_X},\mo_X)$.  
If $\mf_{L\otimes K_X}\in\mw_{s-1,0}^{int}(L\otimes K_X,0)$, then $h^0(\mf_{L\otimes K_X}(K_X))=0$ and $\text{ext}^1(\mf_{L\otimes K_X},\mo_X)=-(L+K_X).K_X$.
If $\mf_{L\otimes K_X}\not\in\mw_{s-1,0}^{int}(L\otimes K_X,0)$, then since $-K_X-G$ is very ample, by (\ref{ddt}) we have
\begin{eqnarray}\label{dgf1}h^0(\mf_{L\otimes K_X}(K_X))&=&h^0(\mi_Z(L\otimes K_X^{\otimes 2}))\nonumber\\ & \leq& h^0(\mi_Z(L\otimes K_X\otimes\mo_X(-G)))-1
\nonumber\\ &\leq& h^0(\mi_Z(L\otimes K_X))-1=s-1.\end{eqnarray}
Hence $\text{ext}^1(\mf_{L\otimes K_X},\mo_X)\leq s-1-(L+K_X).K_X$.  Hence for $s\geq 2$
\begin{equation}\label{dh1}\begin{array}{l}dim~H_{\ell}^{L,s,t}+s-1 = ~dim~\p(p_{*}(\I_{\ell}\otimes q^*(L\otimes K_X))|_{H_{\ell}^{L,s,t}})\\
\leq \max\{dim~\mw^{int}_{s-1,0}(L\otimes K_X,0)-(L+K_X).K_X,~~ s-1-(L+K_X).K_X+\\ ~~~~ dim~(\mm^d(L\otimes K_X,0)\setminus\mm^{int}(L\otimes K_X,0)\cup\displaystyle{\bigcup_{i<0}}\mw^{int}_{k,i}(L\otimes K_X,0))\}\\
\leq\max\{(L+K_X).L-1, ~(L+K_X).L-3+s\}=(L+K_X).L-3+s.\end{array}
\end{equation}
Hence $dim~H_{\ell}^{L,s,t}\leq 2\ell-2$ for all $s\geq2$.  
Hence $\cb$-(3) is fulfilled.

\emph{Step 4: $\cb$-(4).}

$\cb$-(4) can be shown analogously: $Q_2^o$ is obviously nonempty and there is a classifying map $Q_2\xrightarrow{\phi^{L\otimes K_X}_{L\otimes K_X}}\mm^d(L\otimes K_X,0)$ with fiber over $\mf_{L\otimes K_X}$ contained in $\Ext^1(\mf_{L\otimes K_X},\mo_X)$.   $dim~\rho^{-1}_2(\rho_2(Q_2^o))\setminus Q_2^o\leq dim~Q_2^o-2$ because 
\[\begin{array}{l}~~~~dim~\phi^{L\otimes K_X}_{L\otimes K_X} (\rho^{-1}_2(\rho_2(Q_2^o))\setminus Q_2^o)\\ \leq 
dim~\mm^d(L\otimes K_X,0)\setminus\mm^{int}(L\otimes K_X,0)\cup\bigcup_{i\leq -1}\mw^{int}_{k,i}(L\otimes K_X,0) \\ \leq (L+K_X)^2-2,\end{array}\]
and $\text{ext}^1(\mf_{L\otimes K_X},\mo_X)=-K_X.(L+K_X)$ for all $\mf_{L\otimes K_X}\in \phi^{L\otimes K_X}_{L\otimes K_X} (\rho^{-1}_2(\rho_2(Q_2^o))$.
 
Statement (3) is proved.

\emph{Step 5: $\cb'$-$(3')$ and $\cb'$-$(2'a)$.}

Now we prove Statement (2) of the theorem.  We need to check conditions in $\cb'$ hold.  With no loss of generality, we ask $L=aG+bF$ with $b\geq a$.  Then in this case $L+K_X=mF$ or $G+nF$ with $n>0$ for $e=0$, and $n\geq 2e-1$ for $e\geq 1$.  
Then $\mf_{L\otimes K_X}$ is semistable $\Leftrightarrow~H^0(\mf_{L\otimes K_X})=0$.  Hence $H_{\ell}'\subset \bigcup_{t\geq0} H^{L,1,t}_{\ell}$ and by (\ref{count}) we have $\cb'$-$(3')$.  Notice that (\ref{count}) holds for $L=aG+bF$ ample with $\min\{a,b\}\geq 2$.

Also $M(L\otimes K_X,0)\cong |L\otimes K_X|$ and $\mlk'\cong|L\otimes K_X|'$.  Then easy to check $\cb'$-$(2'a)$ holds.

\emph{Step 6: $\cb'$-$(2'b)$.}

Now we check $\cb'$-$(2'b)$.  First let $L\otimes K_X=G+nF$ with $|G+nF|^{int}\neq\emptyset$. 
Recall the commutative diagram in (\ref{pie})
\begin{equation}\label{rpie}\xymatrix{\p(\mv') &P_2'\ar[l]_{~~~~\supseteq}\ar[r]^{\sigma'_2}\ar[d]_{f_{Q_2}'} &\Omega'_{L\otimes K_X}\ar[d]^{f'_M}\\ & Q_2'\ar[r]_{g_2'~~~~~~~} &\mlk' }.\end{equation}   
where $\mv'=\mathscr{E}xt_p^1(\Q_{L\otimes K_X}|_{\Omega'_{L\otimes K_X}},q^{*}\mo_X)$ with $\Q_{L\otimes K_X}$ the universal quotient over $\Omega_{L\otimes K_X}$.  $\mv'$ is locally free of rank $-(L+K_X).K_X$ on $\Omega'_{L\otimes K_X}$.  $P'_2\subset\p(\mv')$ parametrizing torsion free extensions of $\Q_{\mathfrak{s}}$ by $\mo_X$ for all $\mathfrak{s}\in \Omega'_{L\otimes K_X}$ and $f'_{Q_2}: P'_2\ra Q'_2$ is the classifying map and also a principal $PGL(V_{L\otimes K_X})$-bundle for some vector space $V_{L\otimes K_X}$.

To show the complement of $P_2'$ inside $\p(\mv')$ is of codimension $\geq2$, it is enough to show for every $\mf_{L\otimes K_X}\in\mm^R(L\otimes K_X,0)$, $H^0(\mf_{L\otimes K_X}(K_X))=0$ with support $C_{\mf_{L\otimes K_X}}=C^1_{\mf_{L\otimes K_X}}\cup C^2_{\mf_{L\otimes K_X}}$ such that $C^1_{\mf_{L\otimes K_X}}\in|F|$ and $C^2_{\mf_{L\otimes K_X}}\in|G+(n-1)F|^{int}$ , there is a torsion free extension in $\Ext^1(\mf_{L\otimes K_X},\mo_X)$.  $\forall~\mf'_{L\otimes K_X}\subsetneq\mf_{L\otimes K_X}$, $\Ext^1(\mf_{L\otimes K_X}/\mf'_{L\otimes K_X},\mo_X)$ can be view as a subspace of $\Ext^1(\mf_{L\otimes K_X},\mo_X)$.  There is a torsion free extension in $\Ext^1(\mf_{L\otimes K_X},\mo_X)$ $\Leftrightarrow$ $\text{ext}^1(\mf_{L\otimes K_X}/\mf'_{L\otimes K_X},\mo_X)< \text{ext}^1(\mf_{L\otimes K_X},\mo_X)$, $\forall~\mf'_{L\otimes K_X}\subsetneq\mf_{L\otimes K_X}$. 
Now we have that $C_{\mf_{L\otimes K_X}}=C^1_{\mf_{L\otimes K_X}}\cup C^2_{\mf_{L\otimes K_X}}$, $C^i_{\mf_{L\otimes K_X}}\cong\p^1$ and $deg(K_X|_{C^i_{\mf_{L\otimes K_X}}})<0$, for $i=1,2$.  Therefore
$\forall~\mf'_{L\otimes K_X}\subsetneq\mf_{L\otimes K_X}$, either $\Ext^1(\mf_{L\otimes K_X}/\mf'_{L\otimes K_X},\mo_X)=0$ or $\Ext^2(\mf_{L\otimes K_X}/\mf'_{L\otimes K_X},\mo_X)=0$.  Hence the map $\Ext^1(\mf_{L\otimes K_X}/\mf'_{L\otimes K_X},\mo_X)\hookrightarrow \Ext^1(\mf_{L\otimes K_X},\mo_X)$ can not be surjetive.  The reason is that $\text{ext}^1(\mf_{L\otimes K_X},\mo_X)=2n+2-e>0$ and $\Ext^1(\mf'_{L\otimes K_X},\mo_X)\neq 0$ since $\chi(\mf'_{L\otimes K_X}(K_X))<0$.

If $L\otimes K_X=F$, then $\cb'$-$(2'b)$ is obvious.  Let $|L\otimes K_X|^{int}=\emptyset$, i.e. $L\otimes K_X=nF$ with $n>1$.  In this case $|L\otimes K_X|'=|L\otimes K_X|$.  $\Omega_{L\otimes K_X}'=\Omega_{L\otimes K_X}$.  In order to show $dim~\p(\mv')\setminus P_2'\leq dim~P_2'-2$, it is enough to show for every $\mf_{L\otimes K_X}$ semistable, $\p(\Ext^1(\mf_{L\otimes K_X},\mo_X)\setminus\Ext^1(\mf_{L\otimes K_X},\mo_X)^{tf})$ is of dimension $\leq -K_X.(L+K_X)-3$, where 
$\Ext^1(\mf_{L\otimes K_X},\mo_X)^{tf})$ is the subset parametrzing torsion free extensions.  We have 
\[\Ext^1(\mf_{L\otimes K_X},\mo_X)\setminus\Ext^1(\mf_{L\otimes K_X},\mo_X)^{tf}=\bigcup_{\mf'_{L\otimes K_X}\subsetneq\mf_{L\otimes K_X}}\Ext^1(\mf_{L\otimes K_X}/\mf'_{L\otimes K_X},\mo_X)\]
Since $L\otimes K_X=nF$, $\forall~\mf'_{L\otimes K_X}\subsetneq\mf_{L\otimes K_X}$, either $\Ext^1(\mf_{L\otimes K_X}/\mf'_{L\otimes K_X},\mo_X)=0$ or $\Ext^2(\mf_{L\otimes K_X}/\mf'_{L\otimes K_X},\mo_X)=0$.  Hence we only need to show that $\forall~\mf'_{L\otimes K_X}\subsetneq\mf_{L\otimes K_X}$ such that $\Ext^1(\mf_{L\otimes K_X}/\mf'_{L\otimes K_X},\mo_X)\neq 0$, $\text{ext}^1(\mf'_{L\otimes K_X},\mo_X)\geq 2$.  It is enough to show $\text{ext}^1(\mf'_{L\otimes K_X},\mo_X)\geq 2$ for every $\mf'_{L\otimes K_X}\subsetneq\mf_{L\otimes K_X}$ with $C_{\mf'_{L\otimes K_X}}$ integral.  On the other hand $C_{\mf'_{L\otimes K_X}}\cong\p^1$ if integral, and also $deg(K_X|_{C_{\mf'_{L\otimes K_X}}})=-2$.  Hence $\text{ext}^1(\mf'_{L\otimes K_X},\mo_X)=h^1(\mf'_{L\otimes K_X}(K_X))\geq2$ because $\chi(\mf'_{L\otimes K_X}(K_X))\leq -2$.

\emph{Step 7: $\cb'$-$(4')$.}

$\cb'$-$(4')$ is the last thing left to check.  
$$Q_2':=\left\{[f_2:\mi_Z(L\otimes K_X)\twoheadrightarrow \mf_{L\otimes K_X}]\in Q_2\left|\begin{array}{l}
\mf_{L\otimes K_X}~is~semistable,\\ h^0(\mf_{L\otimes K_X}(K_X))=0,~and\\ 
Supp(\mf_{L\otimes K_X})\in |L\otimes K_X|'.
\end{array}\right.\right\}.$$
In this case $\mf_{L\otimes K_X}$ is semistable $\Leftrightarrow$ $H^0(\mf_{L\otimes K_X})=0$.   $h^0(\mi_Z(L\otimes K_X))=1$ for all $\mi_Z\in\rho_2(Q_2')$, hence $\rho_2|_{Q_2'}$ is bijective and hence an isomorphism, therefore $Q_2'\cong \rho_2^{-1}(\rho_2(Q'_2))$ and $\cb'$-$(4')$ holds.  

The proof of Theorem \ref{ruled} is finished.
\end{proof}

\end{document}